\newtheorem{theorem}{Theorem}[section]
\newtheorem{proposition}[theorem]{Proposition}
\newtheorem{defn}[theorem]{Definition}
\newtheorem{lemma}[theorem]{Lemma}
\newtheorem{cor}[theorem]{Corolary}
\newcommand\R{\mathbb{R}}
\newcommand\F{\mathcal{F}}
\newcommand\KK{\mathcal{K}}
\newcommand\Ks{\mathcal{K}_s}
\newcommand\HH{\mathcal{H}}
\newcommand\Hs{\mathcal{H}_s}
\newcommand\lapal{(-\Delta)^{\alpha}}
\newcommand\lapp{(-\Delta)^{-s}}
\newcommand\into{\int_{0}^T\int_{\mathbb{R}^N}}
\newcommand\intr{\int_{\mathbb{R}^N}}
\newcommand\x{\overline{x}}
\newcommand\lra{\longrightarrow}
\numberwithin{equation}{section}
\newcommand{\RN}{\mathbb{R}^N}
\begin{document}

\title{\bf Finite and infinite speed of propagation for  \\ porous medium equations with nonlocal pressure}

\author{\bf Diana Stan, F\'elix del Teso  and Juan Luis V\'azquez}

\maketitle

\begin{abstract}
We study a porous medium equation with fractional potential pressure:
$$
\partial_t u= \nabla \cdot (u^{m-1} \nabla p), \quad  p=(-\Delta)^{-s}u,
$$
for $m>1$, $0<s<1$ and $u(x,t)\ge 0$.  The problem is posed for $x\in \mathbb{R}^N$, $N\geq 1$, and $t>0$. The initial data $u(x,0)$ is assumed to be a bounded function with compact support or fast decay at infinity. We establish existence of a class of weak solutions for which we determine whether the property of compact support is conserved in time depending on the parameter $m$, starting from the  result of finite propagation known for $m=2$. We find that when $m\in [1,2)$ the problem has infinite speed of propagation, while for $m\in [2,3)$ it has finite speed of propagation. In other words $m=2$ is critical exponent regarding propagation. The main results have been announced in the note \cite{StanTesoVazCRAS}.
\end{abstract}

\vspace{1.3cm}
\noindent\textbf{Keywords}: Nonlinear fractional diffusion, fractional Laplacian, Riesz potential, existence of solutions, finite/infinite speed of propagation.
\vspace{0.3cm}

\noindent 2000 {\sc Mathematics Subject Classification.}
26A33, 
35K65, 
76S05, 

\vspace{1.5cm}
\noindent \textbf{Addresses:}\\
Diana Stan, {\tt diana.stan@uam.es}, \\F\'{e}lix del Teso, {\tt felix.delteso@uam.es},
\\and Juan Luis V{\'a}zquez, {\tt juanluis.vazquez@uam.es},\\
Departamento de Matem\'{a}ticas, Universidad
Aut\'{o}noma de Madrid, \\
Campus de Cantoblanco, 28049 Madrid, Spain

\

\newpage
\small
\tableofcontents
\newpage

\section{Introduction}

In this paper we study the following nonlocal evolution equation
\begin{equation}\label{model1}
   \left\{ \begin{array}{ll}
  u_{t}(x,t) = \nabla \cdot (u^{m-1} \nabla p), \quad  p=(-\Delta)^{-s}u,   &\text{for } x \in \RN, \, t>0,\\[2mm]
  u(0,x)  =u_0(x) &\text{for } x \in \RN,
    \end{array}  \right.
\end{equation}
for $m>1$ and $u(x,t)\ge 0$. The model formally resembles the classical \emph{Porous Medium Equation} (PME) $u_t=\Delta u^m = \nabla (mu^{m-1}\nabla u)$ where the pressure $p$ depends linearly on the density function $u$ according to the Darcy Law. In this model the pressure $p$ takes into consideration nonlocal effects through the Inverse Fractional Laplacian operator $\KK_s=(-\Delta)^{-s}$, that is the Riesz potential of order $2s$. The problem is posed for $x \in \RN$, $N\ge 1$ and $t>0$. The initial data $u_0:\RN \to [0,\infty) $  is bounded with compact support or fast decay at infinity.

As a motivating precedent, in the work \cite{CaffVaz} Caffarelli and V\'{a}zquez proposed the following model of porous medium equation with nonlocal diffusion effects
\[
  \partial_t u= \nabla \cdot (u \nabla p), \ \ \ p=(-\Delta)^{-s}u. \tag{CV}\label{ModelCaffVaz}
\]
The study of this model has been performed in a series of papers as follows. In \cite{CaffVaz}, Caffarelli and V\'{a}zquez developed the theory of existence of bounded weak solutions that propagate with finite speed. In \cite{CaffVaz2}, the same authors proved the asymptotic time behaviour of the solutions. Self-similar non-negative solutions are obtained by solving an elliptic obstacle problem with fractional Laplacian for the pair pressure-density, called obstacle Barenblatt solutions. Finally, in \cite{CaffSorVaz}, Caffarelli, Soria and V\'{a}zquez considered the regularity and the $L^1-L^\infty$ smoothing effect. The regularity for $s=1/2$ has been recently done in \cite{CaffVazquezReg}. The study of fine asymptotic behaviour (rates of convergence) for \eqref{ModelCaffVaz} has been performed by Carrillo, Huang, Santos and V\'azquez \cite{CarrilloHuangVazquez} in the one dimensional setting.
Putting $m=2$ in \eqref{model1}, we recover Problem \eqref{ModelCaffVaz}.

A main question in this kind of nonlocal nonlinear diffusion models is to decide whether compactly supported data produce compactly supported solutions, a property known as finite speed of propagation. Surprisingly, the answer was proved to be positive for $m=2$ in paper \cite{CaffVaz}, for $m=1$ we get the linear fractional heat equation, that  is explicitly solvable by convolution with a positive kernel, hence it has infinite speed of propagation.
The main motivation of this paper is establishing the alternative finite/infinite speed of propagation for the solutions of Problem \eqref{model1} depending on the parameter $m$. In the process we construct a theory of existence of solutions and derive the main properties.  A modification of the numerical methods developed in \cite{Teso,TesoVaz} pointed to us to the possibility of having two different propagation properties.

\medskip

\noindent {\bf Other related models.} Equation (CV) with $s=1/2$ in dimension $N=1$ has been proposed by Head \cite{Head} to describe the dynamics of dislocation in crystals. The model is written in the integrated form as
$$
v_t+|v_x|(-\partial^2/\partial_{xx})^{1/2}v=0.
$$
The dislocation density is $u=v_x$. This model has been recently studied by Biler, Karch and Monneau in \cite{BilerKarchMonneau}, where they prove that the problem enjoys the properties of uniqueness and comparison of viscosity solutions. The relation between $u$ and $v$ is very interesting and will be used by us in the final sections.

Another possible generalization of the (CV) model  is
$$\partial_t u= \nabla \cdot (u \nabla p),\quad p=(-\Delta)^{-s}(|u|^{m-2}u),$$ that has been investigated by Biler, Imbert and Karch in \cite{BilerImbertKarchCRAS,BilerImbertKarch}. They prove the existence of weak solutions and they find explicit self-similar solutions with compact support for all $m>1$. The finite speed of propagation for every weak solution has been done in \cite{Imbert}.

The second nonlocal version of the classical PME is the model
$$
u_t=-(-\Delta)^{s'} u^{m'}, \quad m'>0,
$$
 known as the \emph{Fractional Porous Medium Equation} (FPME). This model has infinite speed of propagation and the existence of fundamental solutions  of self-similar type or Barenblatt solutions is known for  $m>(N-2s')_+/N$. We refer to the recent works \cite{PQRV1,PQRV2,VazquezBarenblattFractPME,BV2012}.
The (FPME) model for $m'=1$, also called linear fractional Heat Equation, coincides with model \eqref{model1} for $s=1-s'$, $m=1$.

\subsection{Main results}

We first propose a definition of solution and establish the existence and main properties of the solutions.

\begin{defn}\label{defWeakSolPMFP}
Let $m>1$. We say that $u$ is a weak solution of $\eqref{model1}$ in $Q_T=\RN \times (0,T)$ with nonnegative initial data $u_0\in L^1(\RN)$ if \ (i)  $u\in L^1(Q_T)$,
 (ii) $\nabla \KK_s[u] \in L^1([0,T): L^1_{loc}(\RN))$, \
(iii) $u^{m-1}\nabla \KK_s[u] \in L^1(Q_T)$, and (iv)
\begin{equation}\label{model1weak}
\into u \phi_tdxdt -\into u^{m-1}\nabla \KK_s (u) \nabla \phi dxdt+ \intr u_0(x)\phi(x,0)dx=0
\end{equation}
holds for every test function $\phi$ in $Q_T$ such that $\nabla \phi$ is continuous, $\phi$ has compact support in $\RN$ for all $t\in (0,T)$ and vanishes near $t=T$.
\end{defn}

Before entering the discussion of finite versus infinite propagation, we study the question of existence. We have the following  result for $1<m<2$.
\begin{theorem}\label{Thm1PMFP}
Let  $m\in (1,2)$, $N\ge 1$. Let $u_0\in L^1(\RN)\cap L^\infty (\RN)$.
Then there exists a weak solution $u$ of equation \eqref{model1} with initial data $u_0$ such that $u\in L^1(Q_T)\cap L^\infty (Q_T)$ and $\nabla\mathcal{H}_s[u] \in L^2(Q_T)$. Moreover, $u$ has the following properties:
\begin{enumerate}
\item \textbf{(Conservation of mass)} For all $t>0$ we have
$\displaystyle{
\int_{\RN}u(x,t)dx=\int_{\RN}u_0(x)dx.
}$
\item \textbf{($L^{\infty}$ estimate) } For all $t>0$ we have  $||u(\cdot,t)||_\infty\leq ||u_0||_\infty$.

\item \textbf{(First Energy estimate)} For all $t>0$,
$$
C\int_0^t  \int_{\RN} |\nabla \HH_s[u]|^2dxdt  +\int_{\RN}u(t)^{3-m}dx \le \int_{\RN}u_0^{3-m}dx,
$$
with $C=(2-m)(3-m)>0$.
\item \textbf{(Second Energy estimate)} For all $t>0$,
\begin{equation*}
\frac{1}{2}\int_{\RN}|\HH_s[u]|^2dx+ \int_0^t\int_{\RN}u^{m-1}|\nabla\KK_s[u]|^2dx\leq \frac{1}{2}\int_{\RN}|\HH_s[\widehat{u}_0]|^2dx .
\end{equation*}

\end{enumerate}
\end{theorem}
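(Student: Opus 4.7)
The plan is to obtain $u$ as a limit of smooth classical solutions of a non-degenerate regularization of \eqref{model1}, and to transfer the four listed estimates to the limit. Concretely, I would approximate the initial datum by $u_{0,\delta}\in C_c^\infty(\RN)$ with $u_{0,\delta}\to u_0$ in $L^1(\RN)\cap L^{3-m}(\RN)$ and $\|u_{0,\delta}\|_\infty\le\|u_0\|_\infty$, and study the uniformly parabolic problem
\[
\partial_t u=\nabla\cdot\bigl((u+\delta)^{m-1}\nabla\KK_s[u]\bigr)+\delta\Delta u,\qquad u(\cdot,0)=u_{0,\delta},
\]
posed on a large ball $B_R$ with zero boundary data, or directly on $\RN$ using the decay of the Riesz kernel. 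Since the diffusion coefficient is smooth and bounded below by $\delta^{m-1}$ and $\KK_s$ is smoothing of order $2s$, quasilinear parabolic theory produces a unique smooth nonnegative solution $u_{R,\delta}$. The extra $\delta\Delta u$ compensates the singularity of $\nabla(u^{m-1})$ near $u=0$ while preserving the basic energy structure.

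For these smooth approximants I would derive the four properties uniformly in $R$ and $\delta$. Mass conservation follows by integrating in $x$ together with the far-field decay $\nabla\KK_s[u_{R,\delta}]\sim|x|^{-(N-2s+1)}$ valid for compactly supported data. The $L^\infty$ bound comes from a maximum principle: at an interior positive maximum one has $\Delta u\le 0$ and $\Delta\KK_s[u]=-\lap^{(1-s)/s}\!u$ of the correct sign, so $\partial_t u\le 0$ there. The first energy estimate comes from testing the equation against $u_{R,\delta}^{2-m}$ and using the fundamental identity
\[
\int_{\RN}\nabla u\cdot\nabla\KK_s[u]\,dx=\int_{\RN}|\nabla\HH_s[u]|^2\,dx=\int_{\RN}u\,(-\Delta)^{1-s}u\,dx,
\]
which yields
\[
\frac{d}{dt}\int_{\RN}\frac{u_{R,\delta}^{3-m}}{3-m}\,dx+(2-m)\int_{\RN}|\nabla\HH_s[u_{R,\delta}]|^2\,dx\le 0
\]
up to a nonnegative $\delta$-correction, and after time integration gives property (3). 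For the second energy estimate I would test against the pressure $p=\KK_s[u_{R,\delta}]$ and use $\int p\,\partial_t u\,dx=\tfrac12\tfrac{d}{dt}\int|\HH_s[u]|^2\,dx$ together with integration by parts in the drift term to reach property (4).

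Finally I would pass to the limit $\delta\to 0$, $R\to\infty$. The first energy estimate gives a uniform bound on $u_{R,\delta}$ in $L^2(0,T;\dot H^{1-s}(\RN))$, hence weak $L^2(Q_T)$ compactness of $\nabla\KK_s[u_{R,\delta}]$. Combined with the $L^\infty$ bound, the equation yields a uniform estimate of $\partial_t u_{R,\delta}$ in a negative Sobolev space, and Aubin--Lions then provides strong $L^q_{\textup{loc}}(Q_T)$ compactness of $u_{R,\delta}$ for any finite $q$. This is enough to pass to the limit in all nonlinear terms of the weak formulation \eqref{model1weak} and to inherit (1)--(4) by weak convergence and lower semicontinuity.

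The hardest step is precisely this passage to the limit in the drift $u^{m-1}\nabla\KK_s[u]$: the two factors only converge strongly and weakly respectively, so one must combine the uniform $L^\infty$ bound, the $H^{1-s}$ energy, and the strong $L^q$ compactness of $u_{R,\delta}^{m-1}$ carefully to identify the limit. A closely related difficulty is that $u^{2-m}$ is not an admissible test function for the limit equation (since $2-m\in(0,1)$ and the limit equation is only known weakly), so the first energy estimate must be obtained at the approximate level and inherited by Fatou-type lower semicontinuity of the dissipation term, rather than derived directly for $u$.
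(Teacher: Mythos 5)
Your proposal follows essentially the same strategy as the paper: regularize the equation to a nondegenerate quasilinear parabolic problem, derive the four estimates for the smooth approximations, and pass to the limit using energy bounds plus an Aubin--Lions/Simon compactness argument. The formal derivations of the $L^\infty$ bound, the first energy estimate via the multiplier $u^{2-m}$ with the identity $\int \nabla u\cdot\nabla\KK_s[u]=\int|\nabla\HH_s[u]|^2$, and the second energy estimate via the multiplier $\KK_s[u]$ all match the paper's Section~\ref{SectionEstim}, and your observation that the first energy estimate must be inherited from the approximate level rather than derived for the limit is exactly how the paper proceeds.

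There are, however, two technical differences that leave a genuine gap as written. First, the paper introduces a fourth regularization parameter: the Riesz kernel itself is mollified, $\KK_s^\epsilon[u]=(\rho_\epsilon*K_s)*u$, before invoking classical quasilinear parabolic theory. Without this, the claim that ``quasilinear parabolic theory produces a unique smooth nonnegative solution'' for
\[
\partial_t u=\nabla\cdot\bigl((u+\delta)^{m-1}\nabla\KK_s[u]\bigr)+\delta\Delta u
\]
is not immediate: the nonlocal drift $\nabla\KK_s[u]$ is not a local coefficient, and the standard quasilinear references (e.g.\ the Lions--Mas-Gallic argument cited in the paper) apply cleanly only when that term has been replaced by convolution with a smooth bounded kernel. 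You need an extra argument, or you should regularize the kernel as the paper does and then pass $\epsilon\to 0$ first, identifying the limit of $\nabla\KK_s^\epsilon[U_1]$ as in Lemma~\ref{lemmaConvK}. Second, the paper keeps the degeneracy parameter $\mu$ and the viscosity parameter $\delta$ independent and removes them in a specific order ($\epsilon\to0$, then $R\to\infty$, then $\mu\to0$, then $\delta\to0$); the $\delta\Delta$ term is retained until the very last step precisely because it supplies the $U\in L^2([0,T];H^1_0)$ bound used in the compactness step, and conservation of mass is only proved once $R\to\infty$. Collapsing $\mu$ and $\delta$ into a single parameter taken to zero simultaneously is plausible for $m\in(1,2)$ (where the first-energy constant $C_1$ is indeed uniform in $\mu$), but you would need to check that the intermediate $H^1$ and $H^{-1}$ bounds used for Simon's lemma survive when both regularizations vanish together, rather than asserting it. Neither issue changes the overall architecture, which is the same as the paper's, but both are points where the proposal currently asserts rather than proves.
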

\noindent The existence for $m\geq2$ is covered in the following result.

\begin{theorem}\label{Thm2PMFP}
Let $m\in [2,3)$. Let $u_0\in L^1(\RN)\cap L^\infty (\RN)$ be such that
\begin{equation}\label{decayu0}
0\le u_0(x)\le Ae^{-a|x|} \text{ for some }A,a>0.
\end{equation}
Then there exists a weak solution $u$ of equation \eqref{model1} with initial data $u_0$
such that $u\in L^1(Q_T)\cap L^\infty (Q_T)$, $\nabla\mathcal{H}_s[u] \in L^2(Q_T)$  and $u$ satisfies the properties $1,2,4$ of Theorem \ref{Thm1PMFP}.  Moreover, the solution
decays exponentially in $|x|$ and the first energy estimate holds in the form
\begin{eqnarray*}
 |C|\int_0^t \int_{\RN} |\nabla \HH_s[u]|^2dxdt
+\int_{\RN}u_0^{3-m}dx \le \int_{\RN}u(t)^{3-m}dx
\end{eqnarray*}
 where $C=C(m)=(2-m)(3-m)$.
\end{theorem}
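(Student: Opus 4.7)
The approach parallels the construction in Theorem~\ref{Thm1PMFP}: approximate \eqref{model1} by a non-degenerate problem, derive uniform estimates, and pass to the limit. The essentially new ingredient for $m\in[2,3)$ is the propagation of exponential decay, which is precisely what makes the quantity $\int u^{3-m}\,dx$ finite and turns the first energy identity into a useful a priori bound rather than a vacuous one (since the right-hand side $-(2-m)\int|\nabla\HH_s[u]|^2$ is now nonnegative, $\int u^{3-m}$ is a growing quantity that has to be controlled from above).

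First I would construct approximate solutions $u_\varepsilon$ by regularising the equation, for instance replacing $u^{m-1}$ by $(u+\varepsilon)^{m-1}$, mollifying the kernel of $\KK_s$, and, if convenient, adding a small viscosity $\varepsilon\Delta u$. Standard parabolic theory then produces smooth, bounded, nonnegative $u_\varepsilon$. From the regularised equation one reads off conservation of mass, the $L^\infty$ estimate (via a maximum-principle argument as for $m=2$ in \cite{CaffVaz} and as in Theorem~\ref{Thm1PMFP}), and the second energy estimate by testing with $\KK_s[u_\varepsilon]$ and using $\int u_t\,\KK_s[u]\,dx=\tfrac{1}{2}\frac{d}{dt}\int|\HH_s[u]|^2\,dx$.

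Next I would derive the first energy identity by multiplying the equation by $(u_\varepsilon+\eta)^{2-m}$ and letting $\eta\to 0$, which yields, after integration by parts and the identity $\int\nabla u\cdot\nabla\KK_s[u]\,dx=\int|\nabla\HH_s[u]|^2\,dx$,
\[
\frac{d}{dt}\int_{\RN}\frac{u_\varepsilon^{3-m}}{3-m}\,dx
=-(2-m)\int_{\RN}|\nabla\HH_s[u_\varepsilon]|^2\,dx.
\]
For $m\in[2,3)$ the right-hand side is nonnegative, so $t\mapsto\int u_\varepsilon^{3-m}\,dx$ is nondecreasing; multiplying by $3-m$ and integrating in time gives the stated inequality, but it is informative only if $\int u_\varepsilon^{3-m}(t)\,dx$ remains finite uniformly in $\varepsilon$ and $t$. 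This is where hypothesis \eqref{decayu0} enters.

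The integrability is secured by the main novel step: propagation of exponential decay. I would construct an explicit supersolution of the form $\bar u(x,t)=A(t)e^{-a(t)|x|}$ of the regularised equation and, with an appropriate choice of the decreasing profile $a(t)$ and increasing amplitude $A(t)$, verify the supersolution inequality. A comparison principle, applicable because the approximate problem is uniformly parabolic, then gives $u_\varepsilon(x,t)\le\bar u(x,t)$ for all $t\in[0,T]$, hence uniform control of $\int u_\varepsilon^{3-m}(t)\,dx\le C(T)$ and the exponential tail bound for the limit.

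Finally, the uniform estimates furnish compactness: $\HH_s[u_\varepsilon]$ is bounded in $L^2([0,T];H^1(\RN))$ by the second energy inequality, $\partial_t u_\varepsilon$ is controlled in a suitable negative-order space through the equation, and the exponential tail bound gives uniform integrability at infinity. An Aubin-Lions argument then yields strong convergence of a subsequence in $L^p_{\mathrm{loc}}(Q_T)$, which is enough to pass to the limit in the nonlinear flux $u_\varepsilon^{m-1}\nabla\KK_s[u_\varepsilon]$ and to verify Definition~\ref{defWeakSolPMFP}; the estimates transfer to the limit by lower-semicontinuity.

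\textbf{Main obstacle.} The delicate point is the exponential supersolution. The Riesz potential of an exponentially decaying function is only polynomially decaying, so the exponential behaviour of $u$ can only survive in the flux through the factor $\bar u^{m-1}$, which must dominate the polynomially decaying $\nabla\KK_s[\bar u]$. Choosing $A(t), a(t)$ so that the resulting supersolution inequality holds uniformly across the full range $m\in[2,3)$, where the degeneracy of $u^{m-1}$ grows with $m$, requires a careful ansatz and a fine estimate of $\nabla\KK_s[e^{-a|x|}]$.
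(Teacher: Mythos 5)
Your plan is correct and follows essentially the same route as the paper: regularize the equation (degeneracy, kernel, viscosity, and bounded domain), derive the $L^\infty$, mass, and two energy estimates, prove an exponential tail bound by comparison with an exponential supersolution at a first contact point (using the $\nabla\KK_s$ and $\Delta\KK_s$ estimates of \cite{CaffVaz}), and pass to the limit via compactness and the uniform tail control which makes $\int u^{3-m}$ finite. The only cosmetic deviation is the form of your supersolution — the paper fixes the decay rate $a$ (reduced to $1$ by scaling) and uses a time-growing amplitude $Ae^{Ct-|x|}$ plus a small lift $hAe^{\eta t}$ to rule out contact at the edge of the tail, rather than a time-decaying rate $a(t)$ — but the underlying comparison argument is the same.
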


We should have covered existence in the whole range $m\geq2$ where we want to prove finite speed of propagation for the constructed weak solutions, see Theorem \ref{ThmInfFiniteProp}. But the existence theory used in the previous theorem breaks down because of the negative exponents $3-m$ that would appear in the first energy estimate for $m>3$ (a logarithm would appear for $m=3$). A new existence approach avoiding such estimate is needed,  and this can be done but is not immediate. We have refrained from presenting such a study here because it would divert us too much from the main interest.

The following is our most important contribution, which deals with the property of finite propagation of the solutions depending on the value of $m$.

\begin{theorem}\label{ThmInfFiniteProp}
a) Let $N\ge 1$, $m \in [2,3)$, $s\in (0,1)$  and let $u$ be a constructed weak solution to problem \eqref{model1} as in Theorem \ref{Thm2PMFP} with compactly supported initial data $u_0 \in L^1(\RN)\cap L^\infty(\RN)$.  Then,  $u(\cdot,t)$ is also compactly supported for any $t>0$, i.e. the solution has \textbf{finite speed of propagation}.

b) Let $N=1$, $m\in (1,2)$, $s\in (0,1)$  and let $u$ be a constructed solution as in Theorem \ref{Thm1PMFP}. Then for any $t>0$ and any $R>0$, the set $\mathcal{M}_{R,t}=\{x: |x|\ge R,\  u(x,t)>0\}$ has positive measure even if $u_0$ is compactly supported. This is a  weak form of  \textbf{infinite speed of propagation}. If moreover $u_0$ is radially symmetric and monotone non-increasing in $|x|$, then we get a clearer result:  $u(x,t)>0$ for all $x\in \R$ and $t>0$.
\end{theorem}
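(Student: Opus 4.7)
My plan is to adapt the super-solution strategy of Caffarelli--V\'azquez \cite{CaffVaz} from the case $m=2$ to the entire range $m\in[2,3)$. Concretely, I would construct a radial compactly supported barrier
$$\bar U(x,t)=\bigl(A(t)-B(t)|x|^{2}\bigr)_{+}^{1/(m-1)},$$
supported in the ball $B_{R(t)}$ with $R(t)^{2}=A(t)/B(t)$, and choose $A,B$ so that $\bar U$ is a true super-solution, i.e.\ $\bar U_{t}\ge \nabla\cdot(\bar U^{m-1}\nabla\bar P)$ with $\bar P:=\mathcal{K}_{s}[\bar U]$. The key computation is to control the Riesz potential $\bar P$ on the support of $\bar U$, but from the explicit profile one expects $\nabla\bar P\in L^{\infty}(B_{R(t)})$ with an explicit bound; the super-solution inequality then reduces to an algebraic condition on the ODEs for $A(t)$ and $B(t)$, yielding polynomial growth of $R(t)$. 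Given such a $\bar U$ with $\bar U(\cdot,0)\ge u_{0}$, I would establish $u_{\varepsilon}\le \bar U$ for the regularized solutions $u_{\varepsilon}$ from Theorem~\ref{Thm2PMFP} by testing the equation satisfied by $(u_{\varepsilon}-\bar U)_{+}$ and using positivity of the Riesz kernel to sign the nonlocal cross terms. Passing $\varepsilon\to 0$ preserves the inclusion $\mathrm{supp}\,u(\cdot,t)\subset B_{R(t)}$. The main obstacle is exactly the nonlocality: $\bar U$ is compactly supported but $\bar P$ is not, so the comparison cannot be reduced to a pointwise argument at the free boundary; the assumption $m\ge 2$ enters because the mobility $u^{m-1}$ then vanishes sufficiently fast at the free boundary to absorb any ``incoming'' contribution from the complement.

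\textbf{Part (b): weak infinite propagation for $m\in(1,2)$ in $N=1$.} As advertised in the introduction, the natural framework in one dimension is the integrated unknown $v(x,t):=\int_{-\infty}^{x}u(y,t)\,dy$, which formally satisfies
$$v_{t}+|v_{x}|^{m-1}\,(-\Delta)^{1-s}v=0.$$
When $u_{0}$ has compact support, $v_{0}$ is a non-decreasing step-like profile with $v_{0}(-\infty)=0$ and $v_{0}(+\infty)=M$. My plan is to use the approximation scheme behind Theorem~\ref{Thm1PMFP}: replace the mobility $u^{m-1}$ by $(u_{\varepsilon}+\varepsilon)^{m-1}\ge \varepsilon^{m-1}$, for which the equation for $u_{\varepsilon}$ is uniformly ``parabolic'' in the nonlocal sense and $u_{\varepsilon}$ is strictly positive everywhere by classical arguments. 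The crucial point is a uniform-in-$\varepsilon$ lower bound on $u_{\varepsilon}(x,t)$ for $x$ outside $\mathrm{supp}\,u_{0}$. This uses that to the left (resp.\ right) of $\mathrm{supp}\,u_{0}$ the quantity $(-\Delta)^{1-s}v_{\varepsilon}(\cdot,0)$ is strictly negative (resp.\ positive), so $v_{\varepsilon}$ is driven strictly away from the limit values $0$ and $M$ on any compact subset there; the condition $m<2$ enters because $(u_{\varepsilon}+\varepsilon)^{m-1}\gtrsim\varepsilon^{m-1}\gg \varepsilon$, preventing the mobility from collapsing faster than the nonlocal driving term. Passing to the limit $\varepsilon\to 0^{+}$ along a subsequence then gives $u(x,t)>0$ on a set of positive measure outside every ball, which is precisely $|\mathcal{M}_{R,t}|>0$.

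\textbf{Radial monotone case and principal obstacle.} Under the additional hypothesis that $u_{0}$ is radially symmetric and non-increasing in $|x|$, the approximations $u_{\varepsilon}$ inherit these properties (by a symmetric-decreasing rearrangement together with uniqueness of the regularized problem), so the limit $u(\cdot,t)$ is also radially non-increasing; a radially non-increasing function on $\mathbb{R}$ that is positive on a set of positive measure outside every compact interval must be positive everywhere, yielding the pointwise statement $u(x,t)>0$ for all $x\in\mathbb{R}$ and $t>0$. The hardest step throughout is the uniform-in-$\varepsilon$ lower bound in part (b): the mobility degenerates and no comparison principle is available for the limiting equation, so one must combine the smoothing effect of $(-\Delta)^{1-s}$ on $v_{\varepsilon}$ with the sharp mobility bound $\varepsilon^{m-1}$ in a way that survives passage to the limit. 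This is precisely the place where the sign $m-2<0$ is indispensable.
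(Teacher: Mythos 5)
Your proposal rests on a different barrier from the paper's, and your choice looks unlikely to close. You propose $\bar U=(A(t)-B(t)|x|^2)_+^{1/(m-1)}$, i.e.\ the Barenblatt-type profile of the \emph{classical} PME $u_t=\Delta u^m$, whose cusp exponent $1/(m-1)$ is tuned to the fact that the classical pressure $mu^{m-1}/(m-1)$ is Lipschitz at the free boundary. That matching is wrong here: with $p=(-\Delta)^{-s}u$ the pressure gradient is bounded but does not degenerate like $u^{m-1}$, and the construction in \cite{CaffVaz} (and the one the paper actually uses) is a pure quadratic barrier $U(x,t)=a\bigl(Ct-(|x|-b)\bigr)^2$ independently of $m$. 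The mechanism for why $m\ge 2$ is needed is concrete and you only gesture at it: at a first contact point at distance $h$ from the free boundary the contact inequality reduces, after inserting the Caffarelli--V\'azquez bounds $-\overline{p_r}\le K_1+K_2h^{1+2s}+K_3h$, $\overline{\Delta p}\le K_4$, to
$$
2C \le 2(m-1)\,h^{\,2m-4}\bigl(K_1 + K_2 h^{1+2s}+ K h\bigr),
$$
and $h^{2m-4}$ is bounded for $|h|\le 1$ precisely when $m\ge 2$. You would also need to rule out contact at the bottom edge of the paraboloid (the paper does this in a separate lemma with a truncated and lifted barrier). Finally, the claim that comparison can be established by "testing the equation satisfied by $(u_\varepsilon-\bar U)_+$" is not substantiated: for this nonlocal mobility equation the paper uses a pointwise first-contact-point argument on the regularized smooth solutions, not an energy method, and the two are not interchangeable without additional work.

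\textbf{Part (b).} You correctly pass to the integrated unknown $v$ solving $v_t+|v_x|^{m-1}(-\Delta)^{1-s}v=0$, but the core of your argument — a uniform-in-$\varepsilon$ lower bound on $u_\varepsilon$ coming from $\varepsilon^{m-1}\gg\varepsilon$ — does not prove what you need. Both quantities tend to zero, so "preventing the mobility from collapsing faster than the driving term" does not by itself survive the limit; you give no rate that would yield a positive lower bound for $u$ (or $v$). The condition $m<2$ in fact enters very differently in the paper: one develops a \emph{viscosity solution} framework for the integrated equation (which, contrary to your closing remark, \emph{does} admit a comparison principle), and constructs an explicit fat-tailed lower barrier
$$
\Phi_\epsilon(x,t)=(t+\tau)^{b\gamma}\bigl((|x|+\xi)^{-\gamma}+G(x)\bigr)-\epsilon,\qquad
\gamma=\frac{2\alpha+m}{2-m},\ \ b=\frac{1}{m-1+2\alpha},\ \ \alpha=1-s,
$$
which is a subsolution for $x<x_0$; the exponent $\gamma$ is positive exactly when $m<2$, and that is precisely where the restriction $m\in(1,2)$ is used. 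Comparison then gives $v\ge\Phi_\epsilon>0$ for $|x|$ large, whence the positive-measure statement for $u$; the radial monotone refinement you sketch at the end is essentially the paper's argument and is fine. You should replace the heuristic $\varepsilon^{m-1}$-vs-$\varepsilon$ discussion with this barrier construction and the accompanying parabolic comparison principle.
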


\noindent \textbf{Remark}

\noindent (i) By constructed weak solution we mean that it is the limit of the approximations process that produces the result of Theorem \ref{Thm2PMFP}.

\noindent (ii) We point out that  part (a) of the theorem would still be true when $m\geq 3$ once we supply an existence theory based on approximations with  solutions of regularized problems.

\subsection*{Organization of the proofs}

\noindent$\bullet$ In Section \ref{SectionEstim} we derive useful energy estimates valid for all $m>1$. Due to the differences in the computations, we will separate the cases $m \ne 2,3$ and $m=3$.

\noindent$\bullet$ In Section \ref{SectionExistI}, \ref{SectExponTail} and \ref{limitssec} we prove the existence of a weak solution of Problem \eqref{model1} as the limit of a sequence of solutions to suitable approximate problems. The range of exponents is $1<m<3$.

\noindent$\bullet$ Section \ref{SectionFiniteProp} deals with the property of finite speed of propagation for $m\ge 2$. See Theorem \ref{ThmBarrierParabola}.

\noindent$\bullet$ In Section \ref{SectionInfinite} we prove the infinite speed of propagation for $m\in (1,2)$ in the one-dimensional case. This section introduces completely different tools. Indeed, we develop a theory of viscosity solutions for the integrated equation $v_t + |v_x|^{m-1}(-\Delta)^{1-s}v=0$, where $v_x=u$ the solution of \eqref{model1}, and we  prove infinite speed of propagation in the usual sense for the solution $v$ of the integrated problem.

Though we do not get the same type of infinite propagation result for $1<m<2$  in several spatial dimensions, the evidence (partial results and explicit solutions) points in that direction, see the comments in Section \ref{sec.comm}.

\section{Functional setting} \label{subsecFunctSett}

We will work with the following functional spaces (see \cite{Hitch2012}).
Let $s\in (0,1)$.  Let $\mathcal{F}$ denote the Fourier transform. We consider
$$
H^s(\RN)=\left\{u:L^2(\RN): \int_{\RN}(1+|\xi|^{2s})|\mathcal{F}u(\xi)|^2d\xi < +\infty \right\}
$$
with the norm
$$
\|u\|_{H^s(\RN)}=\|u\|_{L^2(\RN)} + \int_{\RN} |\xi|^{2s}|\mathcal{F}u(\xi)|^2d\xi.
$$
For functions $u \in H^s(\RN)$, the Fractional Laplacian is defined by
$$
(- \Delta)^s u (x) =C_{N,s}\,\text{P.V.}\int_{\RN}\frac{u(x)-u(y)}{|x-y|^{N+2s}}dy =  C \mathcal{F}^{-1}(|\xi|^{2s}(\mathcal{F}u)),
$$
where $C_{N,s}=\pi^{-(2s+N/2)}\Gamma(N/2+s)/\Gamma(-s).$
  Then
$$
\|u\|_{H^s(\RN)}=\|u\|_{L^2(\RN)} +  C \| (-\Delta)^{s/2}u\|_{L^2(\RN)}.
$$
For functions $u$ that are defined on a subset $\Omega \subset \RN$ with $u=0$ on the boundary $\partial \Omega$, the fractional Laplacian and the $H^s(\RN)$ norm are computed by extending the function $u$ to all $\RN$ with $u=0$ in $\RN \setminus \Omega.$
For technical reasons we will only consider the case $s<1/2$ in $N=1$ dimensional space.

The inverse operator $(-\Delta)^{-s}$ coincides with the Riesz potential of order $2s$ that will be denoted here by $\Ks$. It can be represented by convolution with the Riesz kernel $K_s$:
\[
\Ks[u]=K_s*u, \ \ \ K_s(x)=\frac{1}{c(N,s)}|x|^{-(N-2s)},
\]
where $c(N,s)=\pi^{N/2-2s}\Gamma(s)/\Gamma((N-2s)/2).$ The Riesz potential $\Ks$ is a self-adjoint operator. The square root of $\Ks$ is $\KK_{s/2}$, i.e. the Riesz potential of order $s$ (up to a constant). We will denote it by $\Hs:=(\Ks)^{1/2}$. Then $\Hs$ can be represented by convolution with the kernel $K_{s/2}$. We will write $\KK$ and $\HH$ when $s$ is fixed and known. We refer to \cite{landkof} for the arguments of potential theory used throughout the paper.

The inverse fractional Laplacian $\Ks[u]$ is well defined as an integral operator for all $s\in (0,1)$ in dimension $N\ge 2$, and $s\in (0,1/2]$ in the one-dimensional case $N=1$. We extend our result to the remaining case $s\in (1/2,1)$ by giving a suitable meaning to the combined operator $(\nabla \Ks)$. The details concerning this case will be given in Section \ref{SubsectN1}.

For functions depending on $x$ and $t$, convolution is applied for fixed $t$ with respect to the spatial variables and we then write $u(t)=u(\cdot,t)$.

\subsection{Functional inequalities related to the fractional Laplacian}

We recall some functional inequalities related to the fractional Laplacian operator that we used throughout the paper. We refer to \cite{PQRV2} for the proofs.

\begin{lemma}[\textbf{Stroock-Varopoulos Inequality}] Let $0<s<1$, $q>1$. Then
\begin{equation}\label{StroockVar}
\int_{\RN}|v|^{q-2}v (-\Delta)^{s}v dx \ge \frac{4(q-1)}{q^2}\int_{\RN}\left| (-\Delta)^{s/2}|v|^{q/2}\right|^2 dx
\end{equation}
for all $v\in L^q(\RN)$ such that $(-\Delta)^{s}v \in L^q(\RN)$.
\end{lemma}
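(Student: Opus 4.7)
My plan is to reduce the inequality to a purely algebraic pointwise inequality after expressing both sides as double integrals against the common kernel $|x-y|^{-N-2s}$. First I would use the singular integral representation of $\lap$ together with the symmetrization formula
\[
\intr f \, \lap g\, dx = \frac{C_{N,s}}{2}\int_{\RN}\int_{\RN}\frac{(f(x)-f(y))(g(x)-g(y))}{|x-y|^{N+2s}}\,dx\,dy
\]
to rewrite the left-hand side with $f=|v|^{q-2}v$ and $g=v$. For the right-hand side I would apply the identity $\|(-\Delta)^{s/2}h\|_{L^2}^2 = \intr h\, \lap h\, dx$ with $h=|v|^{q/2}$ and use the same symmetrization. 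Both sides then appear as integrals of pointwise quantities against the same nonnegative kernel, and the common factor $C_{N,s}/2$ cancels.

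The problem thus reduces to the pointwise inequality
\[
(|a|^{q-2}a - |b|^{q-2}b)(a-b) \ge \frac{4(q-1)}{q^2}\bigl(|a|^{q/2} - |b|^{q/2}\bigr)^2, \qquad a,b\in\R.
\]
In the representative case $a\ge b\ge 0$, the fundamental theorem of calculus gives $a^{q-1}-b^{q-1}=(q-1)\int_b^a r^{q-2}\,dr$ and $a^{q/2}-b^{q/2}=\frac{q}{2}\int_b^a r^{q/2-1}\,dr$. Applying the Cauchy--Schwarz inequality to the latter,
\[
(a^{q/2}-b^{q/2})^2 \le \frac{q^2}{4}(a-b)\int_b^a r^{q-2}\,dr = \frac{q^2}{4(q-1)}(a-b)(a^{q-1}-b^{q-1}),
\]
which rearranges to the desired inequality. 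The case $a,b\le 0$ follows by oddness of $r\mapsto|r|^{q-2}r$. For $a>0>b$ one has $(|a|^{q-2}a-|b|^{q-2}b)(a-b)=(a^{q-1}+|b|^{q-1})(a+|b|)$; expansion shows this dominates $(A^{q-1}-B^{q-1})(A-B)$ with $A=\max(a,|b|)$, $B=\min(a,|b|)$, while the right side of the pointwise inequality depends only on $|a|$ and $|b|$, so this mixed-sign case reduces to the one already handled.

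The one genuinely delicate point is justifying the symmetric integral representations under the sole hypotheses $v,\lap v\in L^q(\RN)$, since the double integrals are only formally well defined at this regularity. I would address this by approximating $v$ by smooth compactly supported $v_\varepsilon$ via mollification and truncation, applying the pointwise inequality to $v_\varepsilon$, and passing to the limit using Fatou's lemma on the right and H\"older's inequality combined with the hypotheses on the left. This density argument is the main obstacle; once it is in place, the inequality is just the elementary Cauchy--Schwarz estimate above.
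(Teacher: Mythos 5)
The paper does not prove this lemma; it simply cites \cite{PQRV2} for it. Your proposal is therefore being measured against the standard argument in the literature rather than against anything in the paper itself, and on those terms it is correct: the symmetrization of the bilinear form, the reduction to the pointwise inequality
\[
(|a|^{q-2}a-|b|^{q-2}b)(a-b)\ \ge\ \frac{4(q-1)}{q^2}\bigl(|a|^{q/2}-|b|^{q/2}\bigr)^2,
\]
and its proof via Cauchy--Schwarz on $\int_b^a r^{q/2-1}\,dr$ are exactly the standard route. Your handling of the three sign cases is correct, and in particular the expansion in the mixed case $a>0>b$ does show $(a^{q-1}+|b|^{q-1})(a+|b|)\ge (A^{q-1}-B^{q-1})(A-B)$ by a margin of $2(a^{q-1}|b|+a|b|^{q-1})\ge 0$, so that case does reduce to the one-signed case as you claim. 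You also correctly identify the genuinely delicate issue, namely justifying the symmetric double-integral representation for $v$ satisfying only $v,\,\lap v\in L^q$; as you note, mollification alone is friendly to $\lap$ but truncation is not, so that step needs some care (convergence of $\lap v_\varepsilon\to\lap v$ in $L^q$ while $|v_\varepsilon|^{q-2}v_\varepsilon\to|v|^{q-2}v$ in $L^{q'}$, plus Fatou on the right-hand side). Flagging it rather than glossing over it is the right call; this is the only part of the argument that is not fully written out, and it is the part where the cited reference \cite{PQRV2} earns its keep.
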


\begin{lemma}[\textbf{Generalized Stroock-Varopoulos Inequality}] Let $0<s<1$. Then
\begin{equation}\label{StroockVar2}
\int_{\RN}\psi(v)(-\Delta)^{s}v dx \ge \int_{\RN}\left| (-\Delta)^{s/2}\Psi(v)\right|^2 dx
\end{equation}
whenever $\psi'=(\Psi')^2$.
\end{lemma}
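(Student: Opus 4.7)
The plan is to reduce the inequality to a pointwise comparison via the Gagliardo/Bochner double-integral representation of the fractional Laplacian, and then close the proof with a single application of the Cauchy--Schwarz inequality. First I would recall that for sufficiently regular $v$ (say $v\in \SSS(\RN)$, with the general case then following by density),
\begin{equation*}
\int_{\RN}\psi(v)(-\Delta)^s v\,dx=\frac{C_{N,s}}{2}\iint_{\RN\times\RN}\frac{(\psi(v(x))-\psi(v(y)))(v(x)-v(y))}{|x-y|^{N+2s}}\,dx\,dy,
\end{equation*}
which follows from the singular integral formula for $(-\Delta)^s$ together with a symmetrization in $x$ and $y$. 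Analogously, using the isometry between $(-\Delta)^{s/2}$ on $L^2$ and the $H^s$-seminorm (Plancherel combined with the singular integral representation applied to $\Psi(v)$) one has
\begin{equation*}
\int_{\RN}\left|(-\Delta)^{s/2}\Psi(v)\right|^2dx=\frac{C_{N,s}}{2}\iint_{\RN\times\RN}\frac{(\Psi(v(x))-\Psi(v(y)))^2}{|x-y|^{N+2s}}\,dx\,dy.
\end{equation*}

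Since the two expressions carry the same constant $C_{N,s}/2$ and the same symmetric kernel $|x-y|^{-(N-2s)}$, the claim is reduced to proving the pointwise inequality
\begin{equation*}
(\psi(a)-\psi(b))(a-b)\ge (\Psi(a)-\Psi(b))^2\quad\text{for all } a,b\in\R,
\end{equation*}
under the hypothesis $\psi'=(\Psi')^2$. Assuming without loss of generality that $a\geq b$, I would write
\begin{equation*}
\Psi(a)-\Psi(b)=\int_b^a \Psi'(t)\,dt,
\end{equation*}
and then apply the Cauchy--Schwarz inequality to obtain
\begin{equation*}
(\Psi(a)-\Psi(b))^2\leq (a-b)\int_b^a (\Psi'(t))^2\,dt=(a-b)\int_b^a \psi'(t)\,dt=(a-b)(\psi(a)-\psi(b)).
\end{equation*}
Because $\psi'=(\Psi')^2\ge 0$, $\psi$ is non-decreasing, so both sides are non-negative and the inequality carries its sign consistently through the double integral.

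The only genuinely delicate step is the justification of the bilinear representation for $\int\psi(v)(-\Delta)^s v\,dx$; one must know a priori that $\psi(v)$ and $v$ have enough regularity/integrability for the principal value integral and the subsequent symmetrization to be legitimate. The standard remedy, which I would follow, is to prove the inequality first for $v\in C_c^\infty(\RN)$ (or Schwartz), where all integrals are absolutely convergent, and then extend to the general hypotheses of the lemma by a truncation/approximation argument using Fatou on the right-hand side and continuity of the pairing $\langle \psi(v),(-\Delta)^s v\rangle$ on the left, exactly as in the derivation of the classical Stroock--Varopoulos inequality from which this one is modelled.
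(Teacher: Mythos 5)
Your proof is correct, and it follows the standard route: the paper itself gives no proof but defers to \cite{PQRV2}, where exactly this strategy (double-integral/bilinear representation of $(-\Delta)^s$, reduction to the pointwise inequality $(\psi(a)-\psi(b))(a-b)\ge(\Psi(a)-\Psi(b))^2$, then Cauchy--Schwarz on $\Psi(a)-\Psi(b)=\int_b^a\Psi'$ using $\psi'=(\Psi')^2$) is carried out. The only item worth emphasizing when you write it up cleanly is the observation you implicitly use, namely $\int|(-\Delta)^{s/2}\Psi(v)|^2=\int\Psi(v)(-\Delta)^s\Psi(v)$ by self-adjointness, which is what guarantees the two double integrals come out with the \emph{same} constant $C_{N,s}/2$ and kernel so that the pointwise inequality transfers directly.
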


\begin{theorem}[\textbf{Sobolev Inequality}] Let $0<s<1$ ($s<\frac{1}{2}$ if $N=1$). Then
\begin{equation}\label{SobolevIneq}
\|f\|_{\frac{2N}{N-2s}}\le \mathcal{S}_s \left\| (-\Delta)^{s/2}f\right\|_2,
\end{equation}
where the best constant is given in \cite{BV2012} page 31.
\end{theorem}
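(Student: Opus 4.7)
The plan is to reduce the fractional Sobolev inequality to the Hardy--Littlewood--Sobolev (HLS) inequality via the Riesz potential representation of $(-\Delta)^{-s/2}$. Setting $g=(-\Delta)^{s/2}f$, we have $f=(-\Delta)^{-s/2}g$, and by the Fourier characterization $\F((-\Delta)^{-s/2}g)(\xi)=|\xi|^{-s}\F g(\xi)$, so $f$ is a constant multiple of $K_{s/2}\ast g=\Hs[g]$ in the notation of the functional setting. This converts the target estimate $\|f\|_{2N/(N-2s)}\le\mathcal{S}_s\|(-\Delta)^{s/2}f\|_2$ into a boundedness statement for the Riesz potential of order $s$ acting on $L^2(\RN)$.

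Next I would invoke HLS in convolution form: for $0<\alpha<N$ and $1<p<q<\infty$ with $\tfrac{1}{q}=\tfrac{1}{p}-\tfrac{\alpha}{N}$, one has $\||x|^{-(N-\alpha)}\ast g\|_{L^q(\RN)}\le C\,\|g\|_{L^p(\RN)}$. Choosing $\alpha=s$ and $p=2$ gives exactly $q=\tfrac{2N}{N-2s}$, which together with the Riesz representation above yields the inequality with some constant. HLS itself can be justified either via the weak-type $(p,q)$ estimate for the Riesz potential (truncate $|x|^{-(N-s)}$ into near-zero and near-infinity parts, optimize the cut-off, then apply Marcinkiewicz interpolation) or by duality with the bilinear form $\bigl|\iint f(x)|x-y|^{-(N-s)}g(y)\,dx\,dy\bigr|\le C\|f\|_p\|g\|_{q'}$. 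The one-dimensional restriction $s<1/2$ is precisely what is needed for $2N/(N-2s)=2/(1-2s)$ to be finite and for the Riesz kernel $|x|^{-(N-s)}$ to be locally integrable enough for the above arguments.

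For the sharp constant $\mathcal{S}_s$ one must go further: I would appeal to Lieb's sharp HLS inequality, which identifies the extremizers as the conformally invariant family $f(x)=a\bigl(b^2+|x-x_0|^2\bigr)^{-(N-2s)/2}$, and then evaluate both sides on any such profile to read off $\mathcal{S}_s$. This last step is the delicate one: the mere inequality is essentially a one-line consequence of HLS, whereas pinning down the best constant requires symmetric decreasing rearrangement (to reduce to radial functions) and conformal invariance (transporting the problem to $\mathbb{S}^N$, where extremals can be classified). The main technical obstacle, aside from quoting Lieb's theorem, is bookkeeping of the various normalizing constants $C_{N,s}$ and $c(N,s)$ relating the Fourier definition of $(-\Delta)^{s/2}$ to the Riesz kernel $K_{s/2}$ so that the constant one extracts matches the value tabulated in \cite{BV2012}.
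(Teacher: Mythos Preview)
The paper does not actually prove this theorem: it is quoted as a known functional inequality, with a blanket reference to \cite{PQRV2} for the proofs of the inequalities in that subsection and to \cite{BV2012} for the value of the best constant. There is therefore nothing in the paper to compare your argument against.

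That said, your outline is the standard and correct route. The reduction $f=(-\Delta)^{-s/2}g$ together with the Riesz-potential representation turns the statement into the $L^2\to L^{2N/(N-2s)}$ boundedness of the Riesz potential of order $s$, which is exactly Hardy--Littlewood--Sobolev with $\alpha=s$, $p=2$. Your remark that the restriction $s<1/2$ when $N=1$ is precisely what keeps the target exponent finite is also correct. The identification of the sharp constant via Lieb's theorem and the conformal family of extremizers is likewise the classical argument. The only caution is the bookkeeping you already flag: the constants $C_{N,s}$ and $c(N,s)$ relating the Fourier and kernel definitions must be tracked carefully if you want to match the numerical value in \cite{BV2012}, but for the inequality itself your proof is complete.
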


\subsection{Approximation of the Inverse Fractional Laplacian $(-\Delta)^{-s}$}\label{aproxinvlap}
We consider an approximation $\KK_s^\epsilon$ as follows. Let $K_s(z)=c_{N,s}|z|^{-(N-2s)}$ the kernel of the Riesz potential $\KK_s=(-\Delta)^{-s}$, $0<s<1$ ($0<s< 1/2$ if $N=1$). Let $\rho_\epsilon(x)=\epsilon^{-N} \rho(x/\epsilon)$, $\epsilon>0$ a standard mollifying sequence, where $\rho$ is positive, radially symmetric and decreasing, $\rho \in C_c^\infty(\RN)$ and $\intr \rho \, dx=1$. We define the regularization of $K_s$ as $K_s^\epsilon=\rho_\epsilon \star K_s$. Then
\begin{equation}\label{AproxInverFracLap}\mathcal{K}_s^\epsilon [u] = K_s^\epsilon \star u
\end{equation}
is an approximation of the Riesz potential $\mathcal{K}_s=(-\Delta)^{-s}$. Moreover, $\mathcal{K}_s$ and $\mathcal{K}_s^\epsilon$ are self-adjoint operators with $\mathcal K_s=(\mathcal H_s)^2$, $\mathcal{K}_s^\epsilon=(\mathcal{H}_s^\epsilon)^2$.
Also, $\rho=\sigma*\sigma$ where $\sigma$ has the same properties as $\rho$. Then, we can write $\HH_s^\epsilon$ as the operator with kernel $K_{s/2}*\sigma_\epsilon$. That is:
$$
\intr u \ \mathcal{K}_s^\epsilon [u] dx= \intr |\mathcal{H}_s^\epsilon [u]|^2 dx.
$$
Also $\mathcal{H}_s^\epsilon $ commutes with the gradient:
$$
\nabla \mathcal{H}_s^\epsilon [u] = \mathcal{H}_s^\epsilon [\nabla u].
$$

\section{Basic estimates}\label{SectionEstim}

In what follows, we perform formal computations on the solution of Problem \eqref{model1}, for which we assume smoothness, integrability and fast decay as $|x|\rightarrow \infty$. The useful computations for the theory of existence and propagation will be justified later by the approximation process.
We fix $s\in (0,1)$ and $m\ge 1$. Let $u$ be the solution of Problem \eqref{model1} with initial data $u_0\ge 0$. We assume $u\geq0$ for the beginning. This property will be proved later.

\noindent $\bullet$ \textbf{Conservation of mass:}
 \begin{equation}\label{ConsMass}
 \frac{d}{d t} \intr u(x,t)dx=\intr u_tdx= \intr \nabla \cdot (u^{m-1} \nabla \KK_s [u])dx=0.
\end{equation}
\noindent$\bullet$ \textbf{First energy estimate:}
The estimates here are significantly different depending on the exponent $m$. Therefore, we consider the cases:

\noindent{\sc Case} $m=3$:
\begin{eqnarray*}
 \frac{d}{d t} \intr \log u(x,t)dx&=&\intr \frac{u_t}{u}dx= \intr \nabla u\cdot \nabla \KK_s[u]=\intr |\nabla\HH_s[u]|^2dx.
\end{eqnarray*}
Therefore, by the conservation of mass \eqref{ConsMass} we obtain
\begin{equation}
\frac{d}{d t} \intr( u- \log u)dx=-\intr |\nabla\HH_s[u]|^2dx.
\end{equation}

\noindent{\sc Case} $m\not= 3$:
\begin{eqnarray}
 \frac{d}{d t} \intr u^{3-m}(x,t)dx&= &(3-m)\intr u^{2-m}u_t dx=(3-m)\intr u^{2-m} \nabla (u^{m-1}\nabla \KK_s[u])dx\nonumber\\
 &=&-(3-m)(2-m)\intr \nabla u\cdot \nabla \KK_s[u]dx=-C\intr  |\nabla\HH_s[u]|^2dx\nonumber.
 \end{eqnarray}
Here $C=(3-m)(2-m)$ is negative for $m\in(2,3)$ and positive otherwise.

\noindent If $m>3$ or $1<m<2$ then
 \begin{equation*}
 \frac{d}{dt}\int_{\RN} u^{3-m}dx=-|C|\intr  |\nabla\HH_s[u]|^2dx.
 \end{equation*}

\noindent If $2<m<3$ then
 \begin{equation*}
 \frac{d}{dt}\int_{\RN} u^{3-m}dx=|C|\intr  |\nabla\HH_s[u]|^2dx,
 \end{equation*}
 or equivalently
  \begin{equation*}
\frac{d}{dt}\int_{\RN} u- u^{3-m}dx=-| C|\intr  |\nabla\HH_s[u]|^2dx.
 \end{equation*}

\noindent$\bullet$ \textbf{Second energy estimate:}
\begin{eqnarray}\label{SecondEnergy}
\frac{1}{2} \frac{d}{d t} \intr |\HH_s [u](x,t)|^2dx=\intr \HH_s [u] (\HH_s [u])_tdx=\intr \KK_s [u] u_tdx\\
=\intr \KK_s [u] \nabla \cdot (u^{m-1} \nabla \KK_s [u])dx=-\intr u^{m-1}|\nabla \KK_s [u]|^2dx. \nonumber
\end{eqnarray}

\noindent$\bullet$ $L^\infty$ \textbf{estimate: } We prove that the $L^\infty(\mathbb{R}^N)$ norm does not increase in time. Indeed, at a point of maximum $x_0$ of $u$ at time $t=t_0$, we have
\[
u_t=(m-1)u^{m-1}\nabla u \cdot \nabla p+ u^{m-1}\Delta \KK_s[u].
\]
The first term is zero since $\nabla u(x_0,t_0)=0$. For the second one we have $-\Delta \KK_s =(-\Delta)\lapp=(-\Delta)^{1-s}$ so that
\[
\Delta \KK_s [u](x_0,t_0)=-(-\Delta)^{1-s} u(x_0,t_0) =-c\intr \frac{u(x_0,t_0)-u(y,t_0)}{|x_0-y|^{N-2(1-s)}}dy \leq 0,
\]
where $c=c(s,N)>0$. We conclude by the positivity of $u$ that
\[
u_t(x_0,t_0)=u^{m-1}(x_0,t_0)\Delta \KK_s [u](x_0,t_0) \leq 0.
\]

\noindent$\bullet$ \textbf{Conservation of positivity:} we prove that if $u_0\geq0$ then $u(t)\geq0$ for all times. The argument is similar to the one above.

\noindent$\bullet$ $L^p$ \textbf{estimates for $1<p<\infty$.} The following computations are valid for all $m\geq1$, since $p+m-2>0$:
\begin{eqnarray*}
& & \displaystyle\frac{d}{dt}\intr u^p(x,t)dx=p\intr u^{p-1} \nabla \cdot (u^{m-1} \nabla \KK_s [u])dx\nonumber\\
&= & \displaystyle-p\intr u^{m-1} \nabla (u^{p-1})\cdot \nabla \KK_s [u]dx=-\frac{p(p-1)}{m+p-2}\intr \nabla (u^{p+m-2})\cdot\nabla \KK_s [u]dx\nonumber\\
&=& \frac{p(p-1)}{m+p-2}\intr u^{p+m-2}  \Delta \KK_s [u]dx\nonumber=-\frac{p(p-1)}{m+p-2}\intr u^{p+m-2}  (-\Delta)^{1-s}u \ dx\\
&\le& -\frac{4p(p-1)}{(m+p-1)^2}\int_{\RN}\left|(-\Delta)^{\frac{1-s}{2}} u^{\frac{m+p-1}{2}}\right|^2 dx,
\end{eqnarray*}
where we applied the Stroock-Varopoulos inequality \eqref{StroockVar} with $r=m+p+1$. We obtain that $\displaystyle {\intr u^p(t)dx}$ is non-increasing in time. Moreover, by Sobolev Inequality \eqref{SobolevIneq} applied to the function $\displaystyle {f= u^{(m+p-1)/2}}$, we obtain that
\begin{eqnarray*}
& & \displaystyle\frac{d}{dt}\intr u^p(x,t)dx \le -\frac{4p(p-1)}{(m+p-1)^2 \mathcal{S}_{1-s}^2} \left( \int_{\RN}|u(x,t)|^{\frac{N(m+p-1)}{N-2+2s}}dx\right)^{\frac{N-2+2s}{N}},
\end{eqnarray*}
with the restriction of $s>1/2$ if $N=1$.

\section{Existence of smooth approximate solutions for $m\in (1,\infty)$}\label{SectionExistI}
Our aim is to solve the initial-value problem \eqref{model1} posed in $Q=\RN\times(0,\infty)$ or at least $Q_T=\RN\times(0,T)$, with parameter $0<s<1$. We will consider initial data $u_0\in L^1(\RN)$. We assume for technical reasons that $u_0$ is bounded and we also impose decay conditions as $|x|\to \infty$.

\subsection{Approximate problem}

We make an approach to problem \eqref{model1} based on regularization, elimination of the degeneracy and reduction of the spatial domain.  Once we have solved the approximate problems, we derive estimates that allow us to pass to the limit in all the steps one by one, to finally obtain the existence of a weak solution of the original problem \eqref{model1}. Specifically, for small $\epsilon, \delta, \mu \in (0,1)$ and $R>0$ we consider the following initial boundary value problem
posed in $Q_{T,R}=\{x\in B_R(0), \ t\in (0,T)\}$
\[
\left\{
\begin{array}{ll}
(U_1)_t= \delta \Delta U_1 +\nabla \cdot(d_\mu(U_1) \nabla \KK_s^\epsilon [U_1])&\text{for } (x,t)\in Q_{T,R},\\
U_1(x,0)=\widehat{u}_0(x) &\text{for } x \in B_R(0),\\
U_1(x,t)=0 &\text{for } x\in \partial B_R(0), \ t\geq 0.
\end{array}
\right.
\tag{$P_{\epsilon\delta\mu R}$}\label{model1Aprox} \]
The regularization tools that we use are as follows. $\widehat{u}_0=\widehat{u}_{0,\epsilon,R}$ is a nonnegative, smooth and bounded approximation of the initial data $u_0$ such that $\| \widehat{u}_0 \|_\infty\le \|u_0\|_\infty$ for all $\epsilon>0.$
For every $\mu >0$, $d_\mu : [0,\infty)\to [0,\infty) $ is a continuous function defined by
\begin{equation}\label{dmu}
d_\mu(v)=(v+\mu)^{m-1}.
\end{equation}
The approximation of $\mathcal{K}_s^{\epsilon}$ of $\mathcal{K}_s=(-\Delta)^{-s}$ is made as before in Section \ref{subsecFunctSett}.  The existence of a solution $U_1(x,t)$ to Problem \eqref{model1Aprox} can be done by classical methods and the solution is  smooth. See for instance \cite{LionsMasGalic} for similar arguments.

In the weak formulation we have
\begin{equation}\label{weaksolAprox}
\int_0^T\int_{B_R} U_1(\phi_t-\delta \Delta \phi)dxdt-\int_0^T\int_{B_R}  d_\mu(U_1) \nabla \KK_s^\epsilon [U_1]\nabla \phi dxdt+ \int_{B_R} \widehat{u}_0(x) \phi(x,0)dx=0
\end{equation}
 valid for smooth test functions $\phi$ that vanish on the spatial boundary $\partial B_R$ and for large $t$. We use the notation $B_R=B_R(0)$.

\noindent\textbf{Notations.} The existence of a weak solution of problem \eqref{model1} is done by passing to the limit step-by-step in the approximating problems as follows. We denote by $U_1$ the solution of the approximating problem \eqref{model1Aprox} with parameters $\epsilon,\delta,\mu,R$. Then we will obtain $U_2(x,t)=\lim_{\epsilon\to 0}U_1(x,t)$. Thus $U_2$ will solve an approximating problem \eqref{ProblemMuDeltaR} with parameters $\delta,\mu,R$. Next, we take $U_3=\lim_{R\to \infty}U_2(x)$ that will be a solution of Problem \eqref{ProblemMuDelta}, $U_4:=\lim_{\mu \to 0}U_3(x,t)$ solving Problem \eqref{ProblemDelta}. Finally we obtain $u(x,t)=\lim_{\delta \to 0}U_4(x,t)$ which solves problem \eqref{model1}.

\subsection{A-priori estimates for the approximate problem}\label{SectApriori}
 We derive suitable a-priori estimates for the solution $U_1(x,t)$  to Problem \eqref{model1Aprox} depending on the parameters $\epsilon,\delta,\mu, R$.

\medskip

\noindent$\bullet$\textbf{ Decay of total mass:} Since $U_1\geq 0$ and $U_1=0$ in $\partial B_R$, then $\displaystyle{\frac{\partial U_1}{\partial n}\leq 0}$ and so, an easy computation gives us
\begin{eqnarray}
\frac{d}{dt}\int_{B_R}U_1(x,t)dx&=& \delta \int_{B_R}\Delta U_1 \ dx+ \int_{B_R} \nabla\cdot (d_\mu(U_1)\nabla \KK_\epsilon [U_1])dx\nonumber\\
&=&\int_{\partial B_R} \frac{\partial U_1}{\partial n}d\sigma+\int_{\partial B_R} d_\mu(U_1)\frac{\partial (\KK_\epsilon [U_1])}{\partial n}d\sigma\leq0.
\end{eqnarray}
We conclude that
$$
\int_{B_R}U_1(x,t)dx \le \int_{B_R}\widehat{u}_0(x) \quad \text{ for all } t >0.
$$

\medskip

\noindent$\bullet$\textbf{ Conservation of $L^\infty$ bound}: we prove that $0\leq U_1(x,t)\leq ||\widehat{u}_0||_{\infty}$.
The argument is as in the previous section, using also that at a minimum point $\Delta U_1 \geq 0$ and at a maximum point $\Delta U_1 \leq 0$.  Also at this kind of points we have that
\[
\nabla d_\mu(U_1)=d_\mu'(U_1)\nabla U_1=0.
\]

\noindent$\bullet$\textbf{ Conservation of non-negativity}: $U_1(x,t)\ge 0$ for all $t>0$, $x\in B_R$. The proof is similar to the one in the previous section.
\medskip

\subsubsection{First energy estimate}

We choose a function $F_\mu$ such that
$$
F_{\mu}(0)=F'_{\mu}(0)=0 \quad \text{and} \quad F''_{\mu}(u)=1/d_\mu(u).
$$
Then, with these conditions one can see that $F_\mu (z) >0$ for all $z>0$. Also $F_\mu(U_1)$ and $F'_\mu(U_1)$ vanish on $\partial B_r \times [0,T]$, therefore, after integrating by parts, we get
\begin{equation}\label{derivFmu}
\frac{d}{dt}\int_{B_R}F_\mu(U_1)dx=-\delta \int_{B_R} \frac{|\nabla U_1|^2}{d_\mu(u)}dx- \int_{B_R} |\nabla \HH_s^\epsilon[U_1]|^2dx,
\end{equation}
where $\HH_\epsilon=\KK^{1/2}_\epsilon$. This formula implies that for all $0<t<T$ we have
\begin{equation}\label{energy1}
\int_{B_R} F_\mu(U_1(t))dx+\delta\int_0^t \int_{B_R} \frac{|\nabla U_1|^2}{d_\mu(U_1)}dxdt+\int_0^t  \int_{B_R} |\nabla   \HH_s^\epsilon[U_1]|^2dxdt=\int_{B_R}F_\mu(\widehat{u}_0)dx.
\end{equation}
This implies estimates for $|\nabla \HH_s^\epsilon(U_1)|^2$ and $\delta|\nabla U_1|^2/d_\mu(U_1)$ in $L^1 (Q_{T,R})$. We show how the upper bounds for such norms depend on the parameters $\epsilon,\delta, R, \mu$ and $T$.

The explicit formula for $F_\mu$  is as follows:
\begin{equation*}
F_\mu(U_1)=
\left\{
\begin{array}{ll}
 \displaystyle{\frac{1}{(2-m)(3-m)}[(U_1+\mu)^{3-m}-\mu^{3-m}]-\frac{1}{2-m}\mu^{2-m}U_1}&\text{for } m\not=2,3,\\[3mm]
-\log\left(1+(U_1/\mu  \right)) + U_1/\mu ,&\text{for }m=3,\\[3mm]
(U_1+\mu) \log\left(1+(U_1/\mu)\right)-U_1, &\text{for } m=2.
\end{array}
\right.
\end{equation*}
From formula \eqref{derivFmu} we obtain that the quantity $\int_{B_R}F_\mu(U_1(x,t))dx$ is non-increasing in time:
$$
0\le \int_{B_R}F_\mu(U_1(x,t))dx \le \int_{B_R}F_\mu(\widehat u_0)dx, \quad \forall t>0.
$$
Then, if we control the  term  $\int_{B_R}F_\mu(\widehat u_0)dx$, we will obtain uniform estimates independent of time $t>0$ for the quantity
$$
\delta\int_0^t \int_{B_R} \frac{|\nabla U_1|^2}{d_\mu(U_1)}dxdt + \int_0^t  \int_{B_R} |\nabla   \HH_s^\epsilon[U_1]|^2dxdt.
$$
These estimate are different depending on the range of parameters $m$.

\noindent$\bullet$\textbf{ Uniform bound in the case $m\in (1,2)$}.  We obtain uniform bounds in all parameters $\epsilon, R, \delta,\mu$ for the energy estimate \eqref{energy1}, that allow us to pass to the limit and obtain a solution of the original problem \eqref{model1}. By the Mean Value Theorem
\begin{align*}
\int_{B_R}F_\mu(\widehat u_0)dx &\le \frac{1}{(2-m)(3-m)}\int_{B_R}[(\widehat u_0+\mu)^{3-m}-\mu^{3-m}]dx \\
&\le \frac{1}{2-m}\int_{B_R}(\widehat u_0+\mu)^{2-m} \widehat u_0 dx    \\
&\le     \frac{1}{2-m}(\|u_0\|_{\infty}+1)^{2-m}\int_{\RN} u_0dx.
\end{align*}
Our main estimate in the case $m\in (1,2)$ is:
\begin{equation}\label{energy2}
\delta\int_0^t \int_{B_R} \frac{|\nabla U_1|^2}{d_\mu(U_1)} dx dt+\int_0^t  \int_{B_R} |\nabla \HH_s^\epsilon[U_1]|^2dxdt \le C_1,
\end{equation}
where $\displaystyle{C_1=C_1\left(m, u_0\right)= \frac{2}{(2-m)}(\|u_0\|_{\infty}+1)^{2-m}\|u_0\|_{L^1(\RN)}. }$  This is a bound independent of the parameters $\epsilon,\delta, R$ and $\mu$.

\medskip

\noindent$\bullet$\textbf{ Upper bound in the case $m\in (2,3)$}.
\begin{align*}
\int_{B_R}F_\mu(\widehat u_0)dx &= -\frac{1}{(m-2)(3-m)}\int_{B_R}[(\widehat u_0+\mu)^{3-m}-\mu^{3-m}]dx + \frac{1}{m-2}\mu^{2-m}\int_{B_R} \widehat u_0dx    \\
&\le \frac{1}{m-2}\mu^{2-m}\int_{B_R} \widehat u_0dx    \le \frac{1}{m-2}\mu^{2-m}\int_{\RN} u_0dx.
\end{align*}

This upper bound will allow us to obtain compactness arguments in $\epsilon$ and $R$ for fixed $\mu$.  We will be able to control $\int_{B_R}F_\mu(\widehat u_0)dx -\int_{B_R} F_\mu(U_1(t))dx  $ uniformly in $\mu$, after passing to the limit as $\epsilon\to 0$ and $R\to \infty$, due to a exponential decay result on the solution at time $t\in[0,T]$ that we will prove in Section \ref{SectExponTail} and the conservation of mass.

\noindent\textbf{Remark.} These techniques do not apply in the case $m\ge 3$ because even an exponential decay on the solution is not enough to control the terms in the first energy estimate.

\subsubsection{Second energy estimate} Similar computations to \eqref{SecondEnergy} yields to the following energy inequality
\[
\frac{1}{2}\frac{d}{dt}\int_{B_R}|\HH_s^\epsilon[U_1]|^2dx\leq - \delta \int_{B_R}|\nabla \HH_s^\epsilon[U_1]|^2dx-\int_{B_R}(U_1+\mu)^{m-1}|\nabla\KK_s^\epsilon[U_1]|^2dx.
\]
This implies that, for all $0<t<T$ we have
\begin{equation}\label{secondenergyEps}
\frac{1}{2}\int_{B_R}|\HH_s^\epsilon[U_1(t)]|^2dx+ \delta \int_0^T\int_{B_R}|\nabla \HH_s^\epsilon[U_1]|^2dx+\int_0^T\int_{B_R}(U_1+\mu)^{m-1}|\nabla\KK_s^\epsilon[U_1]|^2dx\leq \frac{1}{2}\int_{B_R}|\HH_s^\epsilon[\widehat{u}_0]|^2dx .
\end{equation}
Note that the last integral is well defined as long as $u_0 \in L^1(\RN)\cap L^\infty(\RN)$.

\section{Exponential tail control in the case $m\ge 2$ }\label{SectExponTail}

In this section and the next one we will give the proof of Theorem \ref{Thm2PMFP}. Weak solutions of the original problem are constructed by passing to the limit after a tail control step. We develop a comparison method with a suitable family of barrier functions, that in \cite{CaffVaz} received the name of \emph{true supersolutions}.

\begin{theorem}
Let $0<s<1/2$, $m\ge 2$ and let $U_1$ be the solution of Problem \eqref{model1Aprox}. We assume that $U_1$ is bounded $0 \le U_1(x,t)\le L$ and that $u_0$ lies below a function of the form
$$V_0(x) =Ae^{-a|x|}, \quad A,a >0.$$
If $A$ is large, then there is a constant $C>0$ that depends only on $(N,s,a,L,A)$ such that for any $T>0$ we will have the comparison
$$U_1(x,t) \le Ae^{Ct-a|x|} \quad \text{for all}\quad x  \in \RN, \ 0<t \le T.$$
\end{theorem}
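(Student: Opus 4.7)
My plan is to construct a \emph{true supersolution} barrier in the spirit of Caffarelli--V\'azquez \cite{CaffVaz} and compare it with $U_1$ using the maximum principle available for the regularized problem \eqref{model1Aprox}, which is uniformly parabolic thanks to the $\delta\Delta$ term and the smoothed, bounded coefficients $d_\mu$ and $\KK_s^\epsilon$. The candidate barrier is
\begin{equation*}
V(x,t)=Ae^{Ct-a|x|},
\end{equation*}
smoothed radially near the origin if needed, with $C=C(N,s,a,L,A)$ chosen large at the end. The initial comparison $\widehat{u}_0(x)\le u_0(x)\le Ae^{-a|x|}=V(x,0)$ is the hypothesis on $u_0$ (after a harmless enlargement of $A$ to absorb the regularization of the initial datum), and $U_1=0\le V$ on $\partial B_R$ is automatic.

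The key step is to check that $V$ satisfies the supersolution inequality
\begin{equation*}
V_t-\delta\Delta V-\nabla\cdot\bigl[d_\mu(V)\nabla\KK_s^\epsilon[V]\bigr]\ge 0\quad\text{on }Q_{T,R}.
\end{equation*}
The local terms are computed explicitly: $V_t=CV$, $|\nabla V|=aV$ and $\Delta V=(a^2-(N-1)a/|x|)V\le a^2V$. For the nonlocal part I would expand
\begin{equation*}
\nabla\cdot\bigl[d_\mu(V)\nabla\KK_s^\epsilon[V]\bigr]=(m-1)(V+\mu)^{m-2}\,\nabla V\cdot\nabla\KK_s^\epsilon[V]+(V+\mu)^{m-1}\,\Delta\KK_s^\epsilon[V],
\end{equation*}
and estimate $|\nabla\KK_s^\epsilon[V](x)|$ and $|\Delta\KK_s^\epsilon[V](x)|$ by writing them as convolutions of $V$ with the smoothed kernels $\nabla K_s^\epsilon$ and $\Delta K_s^\epsilon$, splitting each into a near-field piece $|x-y|\le 1$ (controlled by the $C^2$ regularity of the profile $e^{-a|y|}$ and the cancellation $V(x)-V(y)$) and a far-field piece (where the exponential decay of $V$ in the integration variable combines with the polynomial decay of the kernel). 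Since $V$ is a priori bounded by $Ae^{CT}$ and $m\ge 2$, the coefficients $(V+\mu)^{m-2}$ and $(V+\mu)^{m-1}$ are uniformly controlled in terms of $L$ and $\mu$. Collecting all terms, the inequality reduces to a statement of the form $(C-C_*(N,s,a,L,A))V(x)+R_{\text{tail}}(x)\ge 0$, and $C$ is fixed larger than $C_*$ with $R_{\text{tail}}$ nonpositive at the relevant scales. The parabolic comparison principle then yields $U_1(x,t)\le V(x,t)=Ae^{Ct-a|x|}$.

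The principal obstacle is that the naive pointwise estimate $|\nabla\KK_s^\epsilon[V](x)|\le\kappa V(x)$ is \emph{not} globally true, because the Riesz potential of an exponentially decaying profile decays only polynomially in $|x|$. The right way around this is to use the full profile $e^{-a|y|}$ (rather than just $\|V\|_1$) and exploit the extra factor $(V+\mu)^{m-1}$ supplied by $d_\mu$ when $m\ge 2$, which provides additional exponential decay that is multiplied against the algebraically decaying kernel contributions; for $m=2$, where that extra factor is absent, one has to argue at the contact point of the comparison and use that the effective quantity is a difference $\nabla\KK_s^\epsilon[V-U_1]$ involving the nonnegative defect $V-U_1$ which vanishes there. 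The hypothesis that $A$ be large is exactly what guarantees both the initial comparison with the regularized datum and that the constants $C_*$ controlling the nonlocal tails remain bounded in terms of $(N,s,a,L,A)$ and independent of $\epsilon,\delta,\mu,R$, so that the exponential tail estimate survives the subsequent passages to the limit.
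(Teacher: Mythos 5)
Your main strategy---verifying the global supersolution inequality for $V(x,t)=Ae^{Ct-a|x|}$---does not work, and the reason goes beyond the obstacle you name. As you observe, $\Delta\KK_s^\epsilon[V]$ decays only polynomially (like $|x|^{-(N+2-2s)}$) while $V$ decays exponentially. Your proposed remedy, to absorb this with the factor $(V+\mu)^{m-1}$ for $m>2$, fails because the regularized problem $(P_{\epsilon\delta\mu R})$ has $\mu>0$: as $|x|\to\infty$ one has $(V+\mu)^{m-1}\to\mu^{m-1}>0$, so the term $(V+\mu)^{m-1}\Delta\KK_s^\epsilon[V]$ still decays only polynomially and eventually dominates $(C-\delta a^2)V$. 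The global inequality $V_t-\delta\Delta V-\nabla\cdot[d_\mu(V)\nabla\KK_s^\epsilon[V]]\ge 0$ is therefore genuinely false for large $|x|$, not just hard to verify, and this is not a problem confined to $m=2$.

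What the paper actually does is precisely your ``fallback,'' and it does so for \emph{all} $m\ge2$, not only $m=2$: a contradiction argument at the first contact point. There are several ingredients you omit or misattribute. First, the paper reduces by scaling to $a=L=1$. Second, the barrier is not $Ae^{Ct-r}$ but the modified function $\widehat U(x,t)=Ae^{Ct-r}+hAe^{\eta t}$ with a small additive ``lift'' (eventually one takes $C=\eta$ and $h\ge\mu/A$); this lift guarantees that a first contact with $U_1$ occurs at a finite $(x_c,t_c)$ rather than escaping to $|x|=\infty$, and is tuned to $\mu$. Third, and most importantly, the nonlocal quantities are not estimated as multiples of $V(x_c)$ but as \emph{absolute constants}: at the contact point, $\Delta p(x_c,t_c)\le K_1$ and $-\partial_r p(x_c,t_c)\le K_2$ (quoted from Caffarelli--V\'azquez), because $U_1$ is uniformly bounded and stays below the barrier before $t_c$. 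Substituting the contact-point identities $\partial_r U_1=-Ae^{Ct_c-r_c}$, $\Delta U_1\le Ae^{Ct_c-r_c}$, $(U_1)_t\ge ACe^{Ct_c-r_c}+h\eta Ae^{\eta t_c}$ into the PDE, dividing through by $Ae^{Ct_c-r_c}$, and using $U_1(x_c,t_c)\le 1$ gives, after the choices $C=\eta$, $\mu/A\le h$, an inequality of the form
\begin{equation*}
C+hCe^{r_c}\le \delta + K(1+\mu)^{m-2}\left(m+2he^{r_c}\right),
\end{equation*}
which is impossible for $C\ge \delta+mK(1+\mu)^{m-2}$ and $C\ge 2K(1+\mu)^{m-2}$. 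It is exactly here that $m\ge 2$ is used: $(U_1(x_c,t_c)+\mu)^{m-2}\le(1+\mu)^{m-2}$ is bounded; for $m<2$ this factor blows up as $U_1\to 0$ and no choice of $C$ closes the argument. In short: the contact-point argument is not a fix reserved for $m=2$ but the whole proof, the barrier must be lifted by $hAe^{\eta t}$, and the needed pressure estimates are constants at the contact point, not tail estimates of the Riesz potential of $V$.
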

\begin{proof}

\noindent $\bullet$ \textbf{Reduction.} By scaling we may put $a=L=1$. This is done by considering instead of $U_1$, the function $\tilde{U_1}$ defined as
\begin{equation}
U_1(x,t)=L \tilde{U_1}(ax,bt), \quad b=L^{m-1}a^{2-2s},
\end{equation}
which satisfies the equation
$$(\tilde{U_1})_t=\delta_1\Delta \tilde{U_1} + \nabla.(d_{\frac{\mu}{L}}(\tilde{U_1})\nabla \mathcal{K}_s^{\epsilon a}(\tilde{U_1})),$$
 with $\delta_1=a^{2s}\delta/L^{m-1}$. 
 Note that then $\tilde{U_1}(x,0)\le A_1e^{-|x|}$ with $A_1=A/L$. The corresponding bound for $\tilde{U_1}(x,t)$ will be  $\tilde{U_1}(x,t)\le A/L \ e^{C_1t-|x|}$ with $C_1=C/b=C\left(L^{m-1}a^{2-2s}\right)^{-1}$.

\noindent$\bullet$ \textbf{Contact analysis.} Therefore we assume that $0 \le U_1(x,0) \le 1$ and also that
$$U_1(x,0) \le Ae^{-r}, \quad r=|x|>0,$$
where $A>0$ is a constant that will be chosen below, say larger than $2$. Given constants $C, \epsilon$ and $\eta>0$, we consider a radially symmetric candidate for the upper barrier function of the form
$$\widehat{U}(x,t)=Ae^{Ct-r} + h A e^{\eta t},$$
and we take $h$ small.  Then $C$ will be determined in terms of $A$ to satisfy a true supersolution condition which is obtained by contradiction at the first point $(x_c,t_c)$ of possible contact of $u$ and $\widehat{U}.$

The equation satisfied by $u$ can be written in the form
\begin{equation}\label{eq1}
(U_1)_t=\delta \Delta U_1 + (m-1) (u+\mu)^{m-2} \nabla U_1 \cdot \nabla p + (U_1+\mu)^{m-1}\Delta p,\qquad p=\mathcal{K}_s^\epsilon[U_1].
\end{equation}
We will obtain necessary conditions in order for equation \eqref{eq1} to hold at the contact point $(x_c,t_c)$. Then, we prove there is a suitable choice of parameters $C,A,\eta,h, \mu$ such that the contact can not hold.

\noindent \textbf{Estimates on $u$ and $p$ at the first contact point.} For $0<s<1/2$, at the first contact point $(x_c,t_c)$ we have the estimates
$$
\partial_r U_1 =-A e^{Ct_c-r_c}, \quad \Delta U_1\le A e^{Ct_c-r_c}, \quad (U_1)_t \ge ACe^{Ct_c-r_c}+h \eta A e^{\eta t_c}.
$$Since we assumed our solution $u$ is bounded by $0\le u \le 1$, then
 \begin{equation}\label{est2}
 U_1(x_c,t_c)=Ae^{Ct_c-r_c}+h  A e^{\eta t_c} \le 1 .
 \end{equation}
Moreover, from \cite{CaffVaz} we have the following upper bounds for the pressure term at the contact point for $0<s<1/2$:
\begin{equation}\label{est1}
\Delta p (x_c,t_c) \le K_1, \quad (-\partial_r p) (x_c,t_c) \le K_2.
\end{equation}
Note that we are considering a regularized version of the $p$ used in $\cite{CaffVaz}$. Of course the estimates still true (maybe with slightly bigger constants) since $U_1$ is regular.

\noindent \textbf{Necessary conditions at the first contact point}. Equation \eqref{eq1} at the contact point $(x_c,t_c)$ with $r_c=|x_c|$, implies that
\begin{align*}
ACe^{Ct_c-r_c} + h \eta A e^{\eta t_c} &\le \delta A e^{Ct_c-r_c} +  (m-1) \left(U_1(x_c,t_c)+\mu \right)^{m-2} (-A e^{Ct_c-r_c}) (\partial_r p)+ \\ &+ (U_1(x_c,t_c)+\mu)^{m-1} \Delta p.
\end{align*}
We denote $\xi:=r_c + (\eta-C)t_c$. Using also \eqref{est1} with $K=\max\{K_1,K_2\}$, we obtain, after we simplify the previous inequality by $A e^{Ct_c-r_c}$,
\begin{align*}
C +h \eta  e^{\xi} &\le
\delta  + (m-1) \left(U_1(x_c,t_c)+\mu \right)^{m-2} K +(U_1(x_c,t_c)+\mu)^{m-2} (1+h e^{\xi} +\frac{\mu}{A}e^{r_c-Ct_c})K,
\end{align*}
and equivalently
\begin{align*}
C + \epsilon \eta  e^{\xi} &\le
\delta  + K\left(u(x_c,t_c)+\mu \right)^{m-2} \left(m+h e^{\xi} +\frac{\mu}{A}e^{r_c-Ct_c}\right).
\end{align*}
We take $C=\eta$ and $\displaystyle{\frac{\mu}{A} \le h}$. Then
\begin{align*}
C + h C e^{r_c} &\le \delta  + K\left(U_1(x_c,t_c)+\mu \right)^{m-2} \left(m+h e^{r_c} +h e^{r_c-Ct_c}\right).
\end{align*}
Moreover,
\begin{align*}
C + h C e^{r_c} &\le \delta  + K\left(U_1(x_c,t_c)+\mu \right)^{m-2} \left(m+2h e^{r_c}\right).
\end{align*}
By \eqref{est2} we have that
$$
\mu < U_1(x_c,t_c)+\mu <1+\mu.
$$
Since $m\ge 2$,  then
$$
C + h C e^{r_c} \le \delta  + K\left(1+\mu \right)^{m-2} \left(m+2h e^{r_c} \right) .
$$
This is impossible for $C$ large enough such that
\begin{equation}
 C \ge \delta+mK(1+\mu)^{m-2} \quad \text{and} \quad C \ge2 K\left(1+\mu \right)^{m-2}.
\end{equation}
Since $\mu<1$ and $\delta <1$, then we can choose $C$ as constant, only depending on $m$ and $K$.

\end{proof}

\begin{theorem}Let $1/2\le s < 1$, $m\ge 2$. Under the assumptions of the previous theorem,
the stated tail estimate works locally in time. The global statement must be
replaced by the following: there exists an increasing function $C(t)$ such that
\begin{equation}\label{EstUpperExp}
u(x,t) \le Ae^{C(t)t-a|x|} \quad \text{for all }x\in \RN \text{ and  all }0\le t\le T.
\end{equation}
\end{theorem}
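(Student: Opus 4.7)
The plan is to repeat the barrier-and-contact-point argument of the previous theorem, but with the constant $C$ in the exponent promoted to a time-dependent increasing function $\Phi(t)$, since for $s\in[1/2,1)$ the pointwise estimates \eqref{est1} on $\Delta p$ and $-\partial_r p$ cited from \cite{CaffVaz} are no longer uniform: the Riesz kernel decays too slowly for the far-field tail of $U_1$ to be controlled independently of time. After the same scaling reduction $a=L=1$ as before, I would take the candidate upper barrier
\[
\widehat U(x,t)=Ae^{\Phi(t)-|x|}+hAe^{\Phi(t)},\qquad \Phi(0)=0,
\]
and argue by contradiction that there is no first contact point $(x_c,t_c)\in\RN\times(0,T]$ with $r_c=|x_c|>0$. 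The kinematic inequalities at such a point are identical to those of the previous proof, except that $(U_1)_t\ge A\Phi'(t_c)e^{\Phi(t_c)-r_c}+Ah\Phi'(t_c)e^{\Phi(t_c)}$.

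The key step is to establish time-dependent analogues of \eqref{est1}. Writing $p=\mathcal{K}_s^\epsilon[U_1]=K_s^\epsilon\ast U_1$ and using that $U_1(\cdot,t_c)\le \widehat U(\cdot,t_c)$ pointwise, I would split the convolutions defining $\Delta p(x_c,t_c)$ and $\partial_r p(x_c,t_c)$ into a near-field part, estimated by $\|U_1(t_c)\|_\infty\le 1$ together with finite integrals of derivatives of $K_s^\epsilon$ (these depend on $\epsilon$ but not on $t_c$), and a far-field part on $\{|y-x_c|\ge 1\}$ where one replaces $U_1$ by $\widehat U$ and computes integrals of the form $e^{\Phi(t_c)}\int_{|y-x_c|\ge 1}(e^{-|y|}+h)|y-x_c|^{-(N-2s+k)}\,dy$ for $k=1,2$. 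Since $s<1$, each of these integrals converges, giving bounds
\[
\Delta p(x_c,t_c)\le \widetilde K_1(\epsilon)\,e^{\Phi(t_c)},\qquad -\partial_r p(x_c,t_c)\le \widetilde K_2(\epsilon)\,e^{\Phi(t_c)},
\]
with $\widetilde K_1,\widetilde K_2$ continuous and, crucially, independent of $t_c$.

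Plugging these into \eqref{eq1} at $(x_c,t_c)$ and dividing through by $Ae^{\Phi(t_c)-r_c}$, following the same manipulations as in the previous proof, leads to a necessary condition of the schematic form
\[
\Phi'(t_c)\bigl(1+he^{r_c}\bigr)\le \delta+(1+\mu)^{m-2}\bigl(m+2he^{r_c}\bigr)\bigl(\widetilde K_1+\widetilde K_2\bigr)e^{\Phi(t_c)},
\]
in which every instance of $e^{r_c}$ is dominated by $e^{\Phi(t_c)}$ via \eqref{est2}. Choosing $\Phi$ to be the solution on $[0,T]$ of the ODE obtained from this inequality by turning it into an equality with a strictly larger right-hand side, starting from $\Phi(0)=0$, produces a strict supersolution that contradicts the existence of a contact point. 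Setting $C(t):=\Phi(t)/t$ then yields the increasing function claimed in \eqref{EstUpperExp}.

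The main obstacle is the second step: estimating $\Delta p$ and $\partial_r p$ at the contact point in the presence of a slowly decaying kernel. One must split near- and far-field carefully, use the barrier itself to dominate the tail, and track all dependences on $\Phi$; once this is done, the remaining ODE comparison is routine. The need to let $C(t)$ grow with $t$ is intrinsic to the regime $s\ge 1/2$: the non-locality of the pressure forces a self-referential bound that cannot be closed with a uniform constant, as it could when $s<1/2$.
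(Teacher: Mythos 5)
Your overall strategy (barrier with a time-dependent exponent and a contradiction at a first contact point) is the right shape and matches the paper's, but the crux of the matter — how to bound $-\partial_r p$ at the contact point when $s\ge 1/2$ — is exactly where your argument breaks, and that step is precisely the content of the case $s\ge 1/2$.

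The problem is in your claimed far-field estimate. You propose to bound $-\partial_r p(x_c,t_c)$ by replacing $U_1(\cdot,t_c)$ with the barrier $\widehat U(\cdot,t_c)=Ae^{\Phi(t_c)-|\cdot|}+hAe^{\Phi(t_c)}$ in the region $\{|y-x_c|\ge 1\}$ and evaluating
\[
e^{\Phi(t_c)}\int_{|y-x_c|\ge 1}\bigl(e^{-|y|}+h\bigr)\,|y-x_c|^{-(N-2s+1)}\,dy .
\]
The contribution of the constant term $h$ is, up to a constant, $h\int_1^{\infty} r^{\,2s-2}\,dr$, which diverges for every $s\ge 1/2$ — the very range you are treating. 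So the assertion ``Since $s<1$, each of these integrals converges'' is false for the $k=1$ (gradient-of-pressure) integral, and the estimate $-\partial_r p\le \widetilde K_2(\epsilon)e^{\Phi(t_c)}$ that you feed into the ODE is unjustified. A secondary issue is that $\partial_r K_s^\epsilon$ is not of one sign, so domination $U_1\le\widehat U$ does not directly yield an upper bound for the convolution defining $\partial_r p$ without an additional cancellation argument. What the paper uses instead is the interpolation bound from \cite{CaffVaz},
\[
(-\partial_r p)(x_c,t_c)\le K_1+K_2\,\|U_1(t_c)\|_1^{1/q}\,\|U_1(t_c)\|_\infty^{(q-1)/q},\qquad 1\le q<N/(2s-1),
\]
which exploits the genuine $L^1$-integrability of the solution $U_1$ itself (not the non-integrable barrier); then $\|U_1(t)\|_1$ is bounded via the barrier comparison up to the contact time, giving the factor $A^{1/q}e^{Ct_c/q}$ and leading to the restriction $t_c\le T_1$ and the subsequent iteration in time. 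Your ODE-for-$\Phi$ repackaging of that iteration is fine and would work once the pressure estimate is replaced by the correct one, but as written the key inequality it rests on is not available. Incidentally, the bound on $\Delta p$ at the contact point is uniform in time (the paper uses $\Delta p\le K_0$), so your time-dependent version of it is unnecessary.
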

\begin{proof}
The proof of this result is similar to the one in \cite{CaffVaz} but with a technical adaptation to our model.
When $N\ge 2$, $1/2\le s < 1$, the upper bound $\Delta p (x_c,t_c) \le K_0$ at the first contact point holds. Moreover, in \cite{CaffVaz}, the following upper bound for $ (-\partial_r p) (x_c,t_c) $ is obtained,
\[
(-\partial_r p) (x_c,t_c) \leq K_1+ K_2 ||U_1(t)||_1^{1/q}||U_1(t)||_\infty^{(q-1)/q},
\]
where $1\leq q < N/(2s-1)$. We know that $||U_1(t)||_\infty\leq 1$ and before the first contact point we have that $U_1(x,t)\leq Ae^{ct} e^{-|x|}$, hence $||U_1(t)||_1\leq K_3 A e^{Ct}$. Therefore, if we consider $K=\max\{K_0, K_1, K_2K_3^{1/q}\}$ we have that
\begin{equation}\label{est3}
\Delta p (x_c,t_c) \le K, \qquad (-\partial_r p) (x_c,t_c) \le K + K A^{1/q}e^{Ct_c/q}.
\end{equation}
Using this estimates in the equation we obtain
\begin{align*}
C + h \eta  e^{\xi} \le &  \ \delta  + K(m-1) \left(u(x_c,t_c)+\mu \right)^{m-2} (1 +  A^{1/q}e^{Ct_c/q}) + \\
&+K(u(x_c,t_c)+\mu)^{m-2} (1+he^{\xi} +\frac{\mu}{A}e^{r_c-Ct_c}).
\end{align*}
We put $C=\eta$, $h=\mu/A$  and use that $\mu < u(x_c,t_c)+\mu <1+\mu$ to get
\begin{align*}
C + hC e^{r_c} \le &  \ \delta  +KA^{1/q}(m-1) \left(1+\mu \right)^{m-2} e^{Ct_c/q} + K(1+\mu)^{m-2} \left( m+2h e^{r_c}\right).
\end{align*}
We consider $\mu<1$. The contradiction argument works as before with the big difference that we must restrict the time so that $e^{Ct_c/q} \le 2$, which happens if
$$
t_c \le T_1=(q\log 2) /C.
$$
Then
$$
C + h C e^{r_c} \le \delta +2^{m-1}KA^{1/q}(m-1) + 2^{m-2}Km +2^{m-2}Kh e^{r_c}.  $$
Since $A>1$ and $\delta<1$, and hence $2^{m-1}KA^{1/q}(m-1) + 2^{m-2}Km < 2^{m-1}KA^{1/q}(2m-1)$, we get a contradiction by choosing $C$ such that:
$$
C= 2^{m}KA^{1/q}m \geq \delta + 2^{m-1}KA^{1/q}(2m-1).
$$
We have proved that there will be no contact with the barrier
\[
B_1(x,t)=A e^{Ct-|x|}
\]
for $ t<T_1=c_1 A^{-1/q}$ where $c_1=\frac{q \log 2}{K m 2^m} $.

 We can repeat the argument for another time interval by considering the problem with initial value at time $T_1$, that is,
\[
U_1(x,T_1)\leq A e^{CT_1-|x|}=A_1 e^{-|x|} \mbox{ where } A_1=A e^{C T_1},
\]
and we get $U(x,t)\leq e^{C_1t-|x|}$ for $T_1\leq t<T_2=c_1 A^{-1/q} e^{-CT_1/q}$ where $C_1=C e^{CT_1/q}$. In this way we could find an upper bound to a certain time for the solution depending on the initial data through $A$.

When $N=1$, $1/2\le s < 1$, the operator $\partial_r p$ and $\Delta p$ are considered in the sense given in Section \ref{SubsectN1}.
\end{proof}

\section{Existence of weak solutions for $m\in (1,3)$}\label{limitssec}

\subsection{Limit as $\epsilon \to 0$}\label{SubsectEps}

We begin with the limit as $\epsilon \to 0$ in order to obtain a solution of the equation
\[(U_2)_t=\delta \Delta U_2 + \nabla \cdot (d_\mu(U_2)\nabla \KK_s[U_2]).
\tag{$P_{\delta\mu R}$}\label{ProblemMuDeltaR}
\]

Let $U_1$ be the solution of \eqref{model1Aprox}. We fix $\delta, \mu$ and $R$ and we argue for $\epsilon$ close to $0$.
Then, by the energy formula \eqref{energy2} and the estimates from Section \ref{SectApriori} we obtain that
\begin{equation}\label{energy3}
\delta \int_0^t \int_{B_R}\frac{|\nabla U_1|^2}{(U_1+\mu)^{m-1}}dx dt \le C(\mu,m,\widehat u_0), \quad
\int_0^t \int_{B_R}\left|\nabla \mathcal{H}_s^\epsilon[U_1]\right|^2 dx dt \le C(\mu,m, \widehat u_0),
\end{equation}
valid for all $\epsilon >0.$ Since $\|U_1\|_\infty \le \|u_0\|_{\infty}$ for all $\epsilon>0$, then
$$
\int_0^t \int_{B_R}|\nabla U_1|^2dx dt \le C(\mu,m,u_0) ( \|u_0\|_{\infty} + 1)^{m-1}, \quad \forall \epsilon>0.
$$
We recall that in the case $m\in (1,2)$ the constant $C$ is independent of $\mu$, that is $C=C(m, u_0)$.

\medskip

\noindent \textbf{I. Convergence as $\epsilon \to 0$. } We perform an analysis of the family of approximate solutions $(U_1)_\epsilon$ in order to derive a compactness property in suitable functional spaces.

\noindent $\bullet$  Uniform boundedness: $U_1  \in L^{\infty}(Q_{T,R})$, and the bound $||U_1(t)||_{L^\infty(\mathbb{R}^N)}\leq||u_0||_{L^\infty(\mathbb{R}^N)}$ is independent of $\epsilon,\delta,\mu$ and $ R$ for all $t>0$.  Moreover $||U_1(t)||_{L^1(\mathbb{R}^N)}\leq||u_0||_{L^1(\mathbb{R}^N)}$ for all $t>0$.

\noindent $\bullet$ Gradient estimates. From the energy formula \eqref{energy3} we derive
$$
 U_1\in L^2([0,T]: H_{0}^1(B_R)), \quad \nabla \mathcal{H}_s^\epsilon [U_1]\in L^2([0,T]: L^2(\RN))
$$
uniformly bounded for $\epsilon>0.$  Since $\nabla \mathcal{H}_s^\epsilon [U_1]$ is \lq\lq a derivative of order $1-s$ of $U_1$'', we conclude that
\begin{equation}\label{space1}U_1\in L^2([0,T],H^{1-s}(\RN)).
\end{equation}

\noindent $\bullet$ Estimates on the time derivative $(U_1)_t$: we use the equation \eqref{model1Aprox} to obtain that
\begin{equation}\label{space2}
(U_1)_t\in L^2([0,T]:H^{-1}(\RN))
\end{equation}

\noindent as follows:

\noindent (a) Since $U_1\in L^2([0,T]: H_{0}^1(B_R))$ we obtain that $\Delta U_1\in L^2([0,T]: H^{-1}(\RN))$.

\noindent (b) As a consequence of the Second Energy Estimate and the fact that $U_1 \in L^\infty(Q_T)$, we have that $d_\mu(U_1) \nabla \KK_s^\epsilon[U_1] \in L^2 ([0,T]: L^2(\RN)  )$, therefore $\nabla \cdot (d_\mu(U_1)\nabla \KK_s[U_1])) \in L^2([0,T]:H^{-1}(\RN))$.

Now, since $||(U_1)_t||_{L^1_t([0,T]: H^{-1+s}(\RN))}\leq T^{1/2}||(U_1)_t||_{L^2_t([0,T]: H^{-1+s}(\RN))} $, expressions \eqref{space1} and \eqref{space2}, allow us to apply the compactness criteria of Simon, see Lemma \ref{ConvSimon1} in the Appendix, in the context of
$$
H^{1-s}(\RN)\subset L^2(\RN) \subset H^{-1}(\RN),
$$
and we conclude that the family of approximate solutions $(U_1)$ is relatively compact in $L^2([0,T]:L^2(\RN))$. Therefore, there exists a limit $(U_1)_{\epsilon,\delta, \mu, R} \to (U_2)_{\delta, \mu, R}$ as $\epsilon \to 0$ in $L^2([0,T]:L^2(\RN)),$ up to subsequences. Note that, since $(U_1)_\epsilon$ is a family of positive functions defined on $B_R$ and extended to $0$ in $\RN\setminus B_R$, then the limit $U_2=0$ a.e. on $\RN\setminus B_R$.  We obtain that
\begin{equation}\label{conv1}
U_1\stackrel{\epsilon \to 0}{\longrightarrow}U_2   \quad \text{in }L^2([0,T]:L^2(B_R))=L^2(B_R\times [0,T]).
\end{equation}

\medskip

\noindent \textbf{II. The limit is a solution of the new problem \eqref{ProblemMuDeltaR}}. More exactly, we pass to the limit as $\epsilon \to 0$ in the definition \eqref{weaksolAprox} of a weak solution of Problem \eqref{model1Aprox} and prove that the limit $U_2(x,t)$ of the solutions $U_1 (x,t)$ is a solution of Problem \eqref{ProblemMuDeltaR}.
The convergence of the first integral in \eqref{weaksolAprox} is justified by \eqref{conv1} since
\[
\left|\int_0^T\int_{B_R} (U_1-U_2)(\phi_t-\delta \Delta \phi)dxdt\right|\leq ||U_1-U_2||_{L^2(B_R\times [0,T])}||\phi_t-\delta \Delta \phi||_{L^2(B_R\times [0,T])}.
\]

Convergence of the second integral in \eqref{weaksolAprox} is consequence of the second energy estimate \eqref{secondenergyEps} as we show now. First we note that
\[
||(U_1+\mu)^{\frac{m-1}{2}}\nabla \KK_s^\epsilon[U_1]||_{L^2(B_R\times(0,T))}\leq C
\]
for some constant $C>0$ independent of $\epsilon$. Then, Banach-Alaoglu ensures that there exists a subsequence such that
\[
(U_1+\mu)^{\frac{m-1}{2}}\nabla \KK_s^\epsilon[U_1]\, \stackrel{\epsilon \to 0}{\longrightarrow}\, v \mbox{ in } L^2(B_R\times(0,T)) \mbox{ weakly }.
\]
Moreover, it is trivial to show that $(U_1+\mu)^{-\frac{m-1}{2}}\, \stackrel{\epsilon \to 0}{\longrightarrow}\, (U_2+\mu)^{-\frac{m-1}{2}}$ in $L^2(B_R\times(0,T))$. Then
\[
\nabla \KK_s^\epsilon[U_1]=\frac{ (U_1+\mu)^{\frac{m-1}{2}}}{ (U_1+\mu)^{\frac{m-1}{2}}}\nabla \KK_s^\epsilon[U_1]
\, \stackrel{\epsilon \to 0}{\longrightarrow} \,
\frac{v}{ (U_2+\mu)^{\frac{m-1}{2}}} \mbox{ in } L^1(B_R\times(0,T)).
\]
In particular we get that there exists a limit of $\nabla \KK_s^\epsilon[U_1]$ as $\epsilon \to 0$ in any $L^p(B_R\times (0,T) )$ with $1\leq p\leq \infty$. Now we need to identify this limit. The following Lemma shows that $\nabla \KK_s^\epsilon[U_1]\stackrel{\epsilon \to 0}{\longrightarrow}\nabla \KK_s[U_2]$ in distributions, and so we can conclude convergence in $L^2(B_R\times(0,T))$.

\begin{lemma}\label{lemmaConvK}
Let $s\in(0,1)$ ($0<s<1/2$ if $N=1$). Then

(1) $\KK_s^\epsilon[U_1]  \stackrel{\epsilon \to 0}{\longrightarrow} \KK_s[U_2]$ in $L^1(B_R\times (0,T))$.

\noindent (2) $\displaystyle \int_0^T \int_{B_R}  \KK_s^\epsilon[U_1]  \nabla  \psi \, dx dt  \stackrel{\epsilon \to 0}{\longrightarrow}  \int_0^T \int_{B_R} \KK_s[U_2] \nabla \psi \, dx dt  $ for every $\psi \in C_c^\infty(Q_T).$
\end{lemma}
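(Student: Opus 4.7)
First observe that (2) is an immediate consequence of (1): since $\psi\in C_c^\infty(Q_T)$ has $\nabla\psi$ bounded with compact support in $B_R\times(0,T)$, H\"older gives
\[
\left|\int_0^T\!\!\int_{B_R}\!(\KK_s^\epsilon[U_1]-\KK_s[U_2])\nabla\psi\,dx\,dt\right|\leq \|\nabla\psi\|_\infty\,\|\KK_s^\epsilon[U_1]-\KK_s[U_2]\|_{L^1(B_R\times(0,T))},
\]
so the task reduces to proving (1).

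For (1) I would split
\[
\KK_s^\epsilon[U_1]-\KK_s[U_2]=K_s^\epsilon*(U_1-U_2)+(K_s^\epsilon-K_s)*U_2,
\]
using that both $U_1$ and $U_2$ vanish outside $B_R$. The key uniform estimate, obtained by a single application of Fubini to each convolution, is that for any $f\in L^1(B_R)$ extended by zero to $\RN$,
\[
\|K_s^\epsilon*f\|_{L^1(B_R)}\leq \|K_s^\epsilon\|_{L^1(B_{2R})}\,\|f\|_{L^1(B_R)}\leq \|K_s\|_{L^1(B_{2R+\epsilon})}\,\|f\|_{L^1(B_R)},
\]
and likewise with $K_s^\epsilon$ replaced by $K_s^\epsilon-K_s$. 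This upper bound is uniform in $\epsilon$ (say for $\epsilon\leq 1$) because $K_s(z)=c|z|^{-(N-2s)}$ is locally integrable in $\RN$; this is exactly the place where the restriction $s<1/2$ when $N=1$ is needed, in order to keep $N-2s<N$.

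To conclude, apply the estimate to the first term with $f=U_1(\cdot,t)-U_2(\cdot,t)$ and integrate in $t$: this controls $K_s^\epsilon*(U_1-U_2)$ in $L^1(B_R\times(0,T))$ by a constant times $\|U_1-U_2\|_{L^1(B_R\times(0,T))}$, which tends to zero by the $L^2$ convergence \eqref{conv1} combined with Cauchy--Schwarz on the bounded cylinder. For the second term I would invoke the standard mollifier theorem: since $K_s\in L^1_{loc}(\RN)$, one has $K_s^\epsilon=\rho_\epsilon*K_s\to K_s$ in $L^1(B_{2R})$, and together with the uniform $L^1$ bound on $U_2$ this forces $(K_s^\epsilon-K_s)*U_2\to 0$ in $L^1(B_R\times(0,T))$. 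The only technical care required is to keep the local $L^1$ bound on $K_s^\epsilon$ uniform in $\epsilon$, for which a brief second Fubini on $K_s^\epsilon=\rho_\epsilon*K_s$ suffices; I do not expect any deeper obstacle.
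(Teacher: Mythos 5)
Your proof is correct and follows the same basic strategy as the paper's: part (2) is reduced to part (1) by H\"older, and part (1) is handled by a triangle-inequality split of $\KK_s^\epsilon[U_1]-\KK_s[U_2]$ together with (i) the local integrability of $K_s$ (which is exactly where the restriction $s<1/2$ for $N=1$ enters), (ii) the mollifier convergence $K_s^\epsilon\to K_s$ in $L^1_{loc}$, and (iii) the $L^1(B_R\times(0,T))$ convergence $U_1\to U_2$ obtained from \eqref{conv1} via Cauchy--Schwarz on the bounded cylinder. The only difference is the grouping of the split: the paper writes $(K_s^\epsilon-K_s)*U_1+K_s*(U_1-U_2)$, whereas you write $K_s^\epsilon*(U_1-U_2)+(K_s^\epsilon-K_s)*U_2$. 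Your choice is arguably slightly cleaner in the ``mollifier'' term since $U_2$ is fixed, while the paper's ``mollifier'' term has $U_1$ still depending on $\epsilon$ and hence quietly uses the uniform bound $\|U_1(t)\|_{L^1}\le \|u_0\|_{L^1}$; in exchange, your first term carries both $\epsilon$-dependencies at once, so you need the uniform local $L^1$ bound on $K_s^\epsilon$, which you spell out correctly via $\|K_s^\epsilon\|_{L^1(B_{2R})}\le\|K_s\|_{L^1(B_{2R+\epsilon})}$. Both routes are equivalent in effort and rely on identical lemmas.
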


\begin{proof}
For the first part of the Lemma, we split the integral as follows,
\[
\int_0^T\int_{B_R} (\KK_s^\epsilon[U_1] - \KK_s[U_2])dxdt=\int_0^T\int_{B_R} (\KK_s^\epsilon[U_1] - \KK_s[U_1])dxdt+\int_0^T\int_{B_R} (\KK_s[U_1] - \KK_s[U_2])dxdt.
\]
Note that $\KK_s[U_1]=K_s*U_1$ with $K_s\in L^1_{loc}(\R^N)$ and $\KK_s^\epsilon[U_1]=K_s^\epsilon*U_1$ with $K_s^\epsilon=\rho_\epsilon*K_s$ where $\rho_\epsilon$ is a standard mollifier.
Then the first integral on the right hand side goes to zero as $\epsilon\to 0$.  The second integral goes to zero  with $\epsilon$ as consequence of \eqref{conv1}.

The second part of the Lemma is just a corollary of the first part.
\[
\left|\int_0^T \int_{B_R} (\KK_s^\epsilon[U_1]-\KK_s[U_2])\nabla \psi dx dt  \right|\leq ||\nabla \psi||_\infty ||\KK_s^\epsilon[U_1]-\KK_s[U_2]||_{L^1(B_R\times (0,T))}.
\]
\end{proof}
The remaining case $N=1$, $s\in (1/2,1)$ will be explained in Section \ref{SubsectN1}.
We conclude that,
$$
\int_0^T\int_{B_R} d_\mu(U_1) \nabla \KK_s^\epsilon [U_1]\nabla \phi dxdt \to \int_0^T\int_{B_R} d_\mu(U_2) \nabla \KK_s[U_2]\nabla \phi dxdt, \quad \text{as }\epsilon \to 0.
$$

Note that we can obtain also that $\nabla \HH_s^\epsilon[U_1]\stackrel{\epsilon \to 0}{\longrightarrow}\nabla \HH_s[U_2]$ in $L^2(B_R\times(0,T))$ using the same argument. This allows us to pass to the limit in the energy estimates.

The conclusion of this step is that we have obtained a weak solution of the initial value problem \eqref{ProblemMuDeltaR} posed in $B_R\times [0,T]$ with homogeneous Dirichlet boundary conditions. The regularity of $U_2$, $\HH_s[U_2]$ and $\KK_s[U_2]$ is as stated before. We also have the energy formulas
\begin{equation}\label{energyidentityR}
\int_{B_R} F_\mu(U_2(t))dx+\delta\int_0^t \int_{B_R} \frac{|\nabla U_2|^2}{d_\mu(U_2)}dxdt+\int_0^t  \int_{B_R} |\nabla \HH_s[U_2]|^2dxdt=\int_{B_R}F_\mu(u_0)dx.
\end{equation}

\begin{align*}
\frac{1}{2}\int_{B_R}|\HH_s[U_2(t)]|^2dx+& \delta \int_0^t\int_{B_R}|\nabla \HH_s[U_2]|^2dx\, dt+\int_0^t\int_{B_R}(U_2+\mu)^{m-1}|\nabla\KK_s[U_2]|^2dx \,dt\\
&\leq \frac{1}{2}\int_{B_R}|\HH_s[\widehat{u}_0]|^2dx . \nonumber
\end{align*}

We do not pass now to the limit as $\delta\to0$, because we lose $H^1$ estimates for $U_2$ and we deal with the problem caused by the boundary data. Therefore, we keep the term $\delta \Delta U_2$.

\subsection{Limit as $R\to \infty$}

We will now pass to the limit as $R\to \infty$. The estimates used in the limit on $\epsilon$ in Section \ref{SectApriori} are also independent on $R$. Then the same technique may be applied here in order to pass to the limit as $R\to \infty$. Indeed, we get that $\displaystyle U_3=\lim_{R\to \infty}U_2$ in $L^2(\R^N\times (0,T))$ is a weak solution of the problem in the whole space
\[
(U_3)_t=\delta \Delta U_3 + \nabla\cdot\left( (U_3+\mu)^{m-1}\nabla \KK_s[U_3]\right) \ \ \ x\in \RN, \ \ t>0.
\tag{$P_{\mu\delta}$}\label{ProblemMuDelta}\]

This problem satisfies the property of conservation of mass, that we prove next.

\begin{lemma}
Let $u_0\in L^1(\RN)\cap L^\infty(\RN)$. Then the constructed non-negative solution of Problem \eqref{ProblemMuDelta} satisfies
\begin{equation}\label{conservationofmass}
\int_{\RN} U_3(x,t)dx=\int_{\RN} u_0(x)dx \mbox{ for all } t>0.
\end{equation}
\end{lemma}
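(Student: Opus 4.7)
The plan is to prove the reverse inequality $\int U_3(t)\,dx \geq \int u_0\,dx$ (the opposite inequality is inherited from the $L^1$ decay bound on $U_2$ via the limit $R\to\infty$ together with Fatou's lemma) by a standard cutoff argument. I will fix $\psi \in C_c^\infty(\RN)$ with $0\le\psi\le 1$, $\psi\equiv 1$ on $B_1$, $\operatorname{supp}\psi\subset B_2$, and set $\psi_n(x)=\psi(x/n)$, so that $|\nabla\psi_n|\le C/n$ and $|\Delta\psi_n|\le C/n^2$, both supported in the annulus $A_n:=B_{2n}\setminus B_n$.

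Inserting $\psi_n$ (combined with a suitable temporal cutoff) into the weak formulation of \eqref{ProblemMuDelta} will produce
\begin{equation*}
\int_{\RN} U_3(x,t)\psi_n\,dx - \int_{\RN} u_0\psi_n\,dx \;=\; \delta\int_0^t\!\!\int_{\RN} U_3\Delta\psi_n\,dx\,ds \;-\; \int_0^t\!\!\int_{\RN} (U_3+\mu)^{m-1}\nabla\KK_s[U_3]\cdot\nabla\psi_n\,dx\,ds.
\end{equation*}
As $n\to\infty$ the left-hand side converges to $\int U_3(t)\,dx - \int u_0\,dx$ by dominated convergence, using the uniform bound $\|U_3(\cdot,t)\|_{L^1(\RN)}\leq\|u_0\|_{L^1(\RN)}$, so the task reduces to showing both integrals on the right-hand side vanish in the limit.

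The diffusion term is immediate: $|\delta\int_0^t\!\!\int U_3\Delta\psi_n| \le C\delta t\|u_0\|_1/n^2 \to 0$. For the nonlocal flux term I will decompose $(U_3+\mu)^{m-1}=\mu^{m-1}+g(U_3)$ with $|g(U_3)|\leq C(\mu,u_0)\,U_3$ via the mean value theorem (using $0\le U_3\le\|u_0\|_\infty$). The $\mu^{m-1}$-piece is integrated by parts once more to transfer the gradient onto $\psi_n$, giving $-\mu^{m-1}\int_0^t\!\!\int\KK_s[U_3]\Delta\psi_n\,dx\,ds$; combined with the tail decay $|\KK_s[U_3](x)|\le C|x|^{-(N-2s)}$ and the computation $\int_{A_n}|x|^{-(N-2s)}\,dx = O(n^{2s})$, this vanishes at rate $O(t\,n^{2s-2})$ since $s<1$. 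The $g(U_3)$-piece will be bounded by Cauchy--Schwarz as
\begin{equation*}
(C/n)\,\|\nabla\KK_s[U_3]\|_{L^2(Q_T)}\,\Big(\int_0^t\!\!\int_{A_n} U_3^2\,dx\,ds\Big)^{1/2},
\end{equation*}
where the first factor is finite thanks to the second energy estimate \eqref{secondenergyEps} together with $(U_3+\mu)^{m-1}\geq\mu^{m-1}>0$, and the annular $L^2$ integral vanishes as $n\to\infty$ by dominated convergence since $U_3\in L^1\cap L^\infty$.

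The main obstacle I expect is justifying the pointwise tail decay of $\KK_s[U_3]$. For $m\in[2,3)$ this will be automatic from the exponential tail control of Section \ref{SectExponTail}. For $m\in(1,2)$, where no such tail estimate is available a priori, the decay can be recovered from a standard splitting of the Riesz convolution: the contribution from $\{|y|\le|x|/2\}$ is at most $C\|U_3\|_1|x|^{-(N-2s)}$, and the complementary contribution can be controlled using $\|U_3\|_\infty$ together with the $L^1$ tail bound $\int_{|y|>|x|/2}U_3\,dy\to 0$; alternatively, one can bypass the pointwise estimate by directly bounding $\int_{A_n}|\KK_s[U_3]|\,dx=o(n^2)$ through the Hardy--Littlewood--Sobolev inequality applied to $\KK_s[U_3]\in L^p(\RN)$ for a suitable exponent $p>N/(N-2)$.
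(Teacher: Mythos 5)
Your argument is correct in substance, but the decomposition is genuinely different from the paper's, and one of your intermediate claims is imprecise (though your fallback repairs it). The paper first integrates $\nabla$ by parts onto the whole flux, i.e.\ writes $I_2=\int_{B_{2R}}\KK_s[U_3]\,\nabla\cdot\bigl((U_3+\mu)^{m-1}\nabla\varphi\bigr)\,dx$, and then splits the resulting two terms: the one with $\nabla U_3$ is handled by H\"older using $\nabla U_3\in L^2$ (a consequence of the first energy estimate, which is available precisely because $\delta>0$); the one with $\Delta\varphi$ is split as you do into a $\mu^{m-1}$ piece and a mean-value piece, but the $\mu^{m-1}$ piece is treated by moving $\KK_s$ onto $\Delta\varphi$ by self-adjointness and then exploiting the $R^{-(2-2s)}$ scaling homogeneity of $(-\Delta)^{1-s}\varphi_R$. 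Your proof instead splits $(U_3+\mu)^{m-1}=\mu^{m-1}+g(U_3)$ \emph{before} integrating by parts, keeps the nonlinearity attached to $\nabla\KK_s[U_3]$, and kills the $g(U_3)$ piece by Cauchy--Schwarz using only $\nabla\KK_s[U_3]\in L^2(Q_T)$ from the \emph{second} energy estimate together with $U_3\in L^1\cap L^\infty$; this avoids $\nabla U_3$ altogether and is arguably cleaner. The one place where you should be careful is the asserted pointwise decay $|\KK_s[U_3](x)|\le C|x|^{-(N-2s)}$: without decay of $U_3$ itself this is false (the near-diagonal part of the Riesz convolution only gives $O(\|U_3\|_\infty)$ with no rate), and your sketch of repairing it by the $L^1$ tail of $U_3$ does not produce the claimed rate. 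However, your alternative via Hardy--Littlewood--Sobolev --- choosing $r$ with $1<r<N/(N+2s-2)_+$ so that $\KK_s[U_3]\in L^q$ with $N(1-1/q)<2$, then bounding $\int_{A_n}|\KK_s[U_3]|\,dx$ by H\"older --- is correct and gives $o(n^2)$ as needed, so the proof closes; it would be cleaner to state that argument as the primary one rather than as a fallback.
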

\begin{proof}
Let $\varphi \in C^\infty_0(\RN)$ a cutoff test function  supported in the ball $B_{2R}$ and such that $\varphi\equiv 1$ for $|x|\leq R$, we recall the construction in the Appendix \ref{cutoffSect}. We get
\[
\int_{B_{2R}}(U_3)_t \varphi dx=\delta \int_{B_{2R}} U_3\Delta \varphi dx - \int_{B_{2R}} (U_3+\mu)^{m-1}\nabla \KK_s[U_3] \cdot \nabla \varphi dx=I_1+I_2.
\]
Since $U_3(t)\in L^1(\RN)$ for any $t \geq0$, we estimate the first integral as $I_1=O(R^{-2})$ and then $I_1\to 0$ as $R\to \infty$. For the second integral we have
\[
I_2=\int_{B_{2R}}\KK_s[U_3]\nabla\cdot\left((U_3+\mu)^{m-1}\nabla \varphi\right)dx,
\]
\[
I_2=(m-1)\int_{B_{2R}}\KK_s[U_3](U_3+\mu)^{m-2}\nabla u\cdot \nabla \varphi dx+\int_{B_{2R}}\KK_s[U_3](U_3+\mu)^{m-1}\Delta \varphi dx=I_{21}+I_{22}.
\]
Since $\nabla U_3 \in L^2(\RN)$ and $U_3\in L^\infty(\RN)$,
\[
|I_{21}|\leq C ||(U_3+\mu)^{m-2}||_\infty\left(\int_{B_{2R}}|\nabla U_3|^2 dx\right)^{1/2}\left(\int_{B_{2R}}|\KK_s[U_3]|^2|\nabla \varphi|^2 dx\right)^{1/2} .
\]
Now $\nabla \varphi =O(R^{-1})$, $\nabla\varphi \in L^p$ with $p>N$, so we need $\KK_s[U_3] \in L^q$ for $\displaystyle q<2\frac{1}{1-\frac{1}{N/2}}=\frac{2N}{N-2}$ which is true since $\KK[U_3]\in L^{q}$ for $q>q_0=N/(N-2s)$, and $q_0<2N/(N-2)$ if $4s<N+2$. So, since $p>N$,
\begin{eqnarray*}
|I_{21}|&\leq& C  \left(\int_{B_{2R}}|\nabla \KK_s[U_3]|^q dx\right)^{1/q}\left(\int_{B_{2R}}|\nabla \varphi|^p dx\right)^{1/p} \\
&\leq& C\left(\int_{B_{2R}}R^{-p} dx\right)^{1/p}\leq C R^{\frac{N-p}{p}}\stackrel{R\to \infty}{\lra} 0.
\end{eqnarray*}
For $I_{22}$, we will use the same trick of the previous section,
\[
I_{22}=\int_{B_{2R}}\KK_s[U_3]\left[(U_3+\mu)^{m-1}-\mu^{m-1}\right]\Delta \varphi dx+\mu^{m-1}\int_{B_{2R}}\KK_s[U_3]\Delta \varphi dx=I_{221}+I_{222}.
\]
Now,
\[
I_{222}=\mu^{m-1}\int_{B_{2R}}U_3\KK_s[\Delta \varphi ]dx=\mu^{m-1}||U_3||_1 O(R^{-2+2s})\stackrel{R\to\infty}{\lra}0,
\]
where we use the fact that $\KK\Delta$ has homogeneity $2-2s>0$ as a differential operator. Also,
\[
I_{221}=\int_{B_{2R}}f'(\xi)U_3\KK_s(U_3)\Delta \varphi dx ,
\]
where $f(s)=s^{m-1}$ and $\xi \in [\mu,\mu+U_3(x)]$. Again, since $U_3\in L^\infty$,  there exists a global bound for $f'(\xi)$, that is, $f'(\xi)\leq (m-1)\max\{\mu^{m-2}, (\mu+||U_3||_\infty)^{m-2}\}$ and so integral $I_{221}\to 0$ as $R\to \infty$ (details could be found in \cite{CaffVaz}).

In the limit $R\to \infty$, $\varphi\equiv 1$ and we get \eqref{conservationofmass}.
\end{proof}

\noindent \textbf{Consequence. }The estimates done in Section \ref{SectApriori} can be improved  passing to the limit $R\to \infty$, since the conservation of mass \eqref{conservationofmass} eliminates some of the integrals that presented difficulties when trying to obtain upper bounds independent of $\mu$. Therefore, we compute the following terms in the energy estimate \eqref{energyidentityR}.

\noindent For $m\not=2,3$ we have
\begin{align}\label{RinftyL1}
&\int_{B_R}F_\mu(u_0)dx - \int_{B_R}F_\mu(U_2)dx= \nonumber\\
&= C\int_{B_R} [(u_0+\mu)^{3-m}-\mu^{3-m}]dx -\frac{1}{2-m}\mu^{2-m}\int_{B_R}u_0 dx \nonumber\\
& -  C\int_{B_R} [(U_2+\mu)^{3-m}-\mu^{3-m}]dx +\frac{1}{2-m}\mu^{2-m}\int_{B_R}U_2 dx \nonumber \\
&\longrightarrow\quad C \int_{\RN} [(u_0+\mu)^{3-m}-\mu^{3-m}]dx -  C \int_{\RN} [(U_3+\mu)^{3-m}-\mu^{3-m}]dx,
\end{align}
as $R \to \infty.$ We use the notation $C=\frac{1}{(2-m)(3-m)}.$

\noindent For $m=3$ we have
\begin{align}\label{RinftyL2}
&\int_{B_R}F_\mu(u_0)dx - \int_{B_R}F_\mu(U_2)dx= \nonumber\\
&= - \int_{B_R} \log\left(1+\frac{u_0}{\mu}  \right)dx  + \frac{1}{\mu}\int_{B_R} u_0 dx + \int_{B_R} \log\left(1+\frac{U_2}{\mu} \right)dx  - \frac{1}{\mu}\int_{B_R} U_2 dx \nonumber\\
&\longrightarrow \quad \int_{\RN} \log\left(1+\frac{U_3}{\mu}\right)dx - \int_{\RN} \log\left(1+\frac{u_0}{\mu} \right)dx \quad \text{as }R \to \infty.
\end{align}

\noindent The following theorem summarizes the results obtained until now.

\begin{theorem}\label{ThmExist2}
Let $m>1$ and $u_0\in L^1(\RN)\cap L^\infty (\RN)$ be non-negative. Then there exists a weak solution $U_3$ of Problem \eqref{ProblemMuDelta} posed in $\RN\times(0,T)$ with initial data $u_0$. Moreover, $U_3\in L^\infty([0,\infty): L^1(\RN))$,  and for all $t>0$ we have
\[\int_{\RN}U_3(x,t)dx=\int_{\RN}u_0(x)dx\]
and $||U_3(\cdot,t)||_\infty\leq ||u_0||_\infty$. The following energy estimates also hold:

\textbf{(i) First energy estimate:}\\
$\bullet$ If $m=3$,
\begin{eqnarray}\label{energy4}
\delta\int_0^t \int_{\RN} \frac{|\nabla U_3|^2}{(U_3+\mu)^2}dxdt&+&\int_0^t  \int_{\RN} |\nabla \HH_s[U_3]|^2dxdt
+\int_{\RN}\log\left(\frac{u_0}{\mu}+1\right)dx   \\
&\le&\int_{\RN}\log\left(\frac{U_3(t)}{\mu}+1\right)dx.\nonumber
\end{eqnarray}
$\bullet$ If $m\not=2,3$
and
\begin{eqnarray}\label{energy5}
\delta\int_0^t \int_{\RN} \frac{|\nabla U_3|^2}{(U_3+\mu)^{m-1}}dxdt&+&\int_0^t  \int_{\RN} |\nabla \HH_s[U_3]|^2dxdt +  \\
+C\int_{\RN}\left[(U_3(t)+\mu)^{3-m}-\mu^{3-m}\right]dx &\le &C\int_{\RN}\left[(u_0+\mu)^{3-m}-\mu^{3-m}\right]dx \nonumber
\end{eqnarray}
  where $C=C(m)=\frac{1}{(2-m)(3-m)}$.

  \textbf{(ii) Second energy estimate:}
\begin{align*}
\frac{1}{2}\int_{\RN}|\HH_s[U_3(T)]|^2dx+& \delta \int_0^T\int_{\RN}|\nabla \HH_s[U_3]|^2dx\, dt+\int_0^T\int_{\RN}(U_3+\mu)^{m-1}|\nabla\KK_s[U_3]|^2dx \,dt\\
&\leq \frac{1}{2}\int_{\RN}|\HH_s[u_0]|^2dx . \nonumber
\end{align*}
\end{theorem}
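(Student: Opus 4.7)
The argument follows the same blueprint used for the $\epsilon\to 0$ passage in Section \ref{SubsectEps}, applied now to the sequence $\{U_2\}=\{U_2^R\}$ of weak solutions of $(P_{\delta\mu R})$ obtained there. The crucial observation is that every a priori estimate from Section \ref{SectApriori}, the first energy identity \eqref{energyidentityR}, the second energy estimate, the $L^\infty$ bound $\|U_2\|_\infty\le\|u_0\|_\infty$, and the mass estimate $\|U_2(t)\|_{L^1}\le\|u_0\|_{L^1}$, is uniform in $R$. Therefore the bounds
\[
U_2 \in L^2\bigl(0,T;H^{1-s}(\RN)\bigr), \qquad (U_2)_t\in L^2\bigl(0,T;H^{-1}(\RN)\bigr),
\]
(the second one obtained by reading the equation as in Section \ref{SubsectEps}) hold uniformly in $R$. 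Simon's compactness criterion (Lemma \ref{ConvSimon1}) then produces, along a subsequence, a limit $U_3$ with
\[
U_2\ \stackrel{R\to\infty}{\longrightarrow}\ U_3 \qquad \text{strongly in } L^2_{\mathrm{loc}}(\RN\times(0,T)),
\]
together with $\nabla\HH_s[U_2]\rightharpoonup \nabla\HH_s[U_3]$ weakly in $L^2(\RN\times(0,T))$, $U_2\rightharpoonup U_3$ weakly in $L^2(0,T;H^{1-s}(\RN))$ and $U_3\in L^\infty\bigl(0,\infty;L^1(\RN)\cap L^\infty(\RN)\bigr)$ by lower semicontinuity of the respective norms.

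The next step is to pass to the limit in the weak formulation of $(P_{\delta\mu R})$. The linear terms $\int\!\int U_2(\phi_t-\delta\Delta\phi)$ pose no problem once the test function $\phi$ is compactly supported in $\RN\times[0,T)$, because strong $L^2_{\mathrm{loc}}$ convergence plus the uniform $L^\infty$ bound suffice. For the nonlinear flux $d_\mu(U_2)\nabla\KK_s[U_2]$ I would repeat almost verbatim the identification trick used after Lemma \ref{lemmaConvK}: the second energy estimate gives
\[
\bigl\|(U_2+\mu)^{(m-1)/2}\,\nabla\KK_s[U_2]\bigr\|_{L^2(\RN\times(0,T))}\le C,
\]
so Banach--Alaoglu extracts a weak limit $v$; on the other hand $(U_2+\mu)^{-(m-1)/2}\to(U_3+\mu)^{-(m-1)/2}$ strongly in $L^2_{\mathrm{loc}}$ (uniform $L^\infty$ bound plus a.e. convergence), hence $\nabla\KK_s[U_2]\to v/(U_3+\mu)^{(m-1)/2}$ in $L^1_{\mathrm{loc}}$. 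The identification $v/(U_3+\mu)^{(m-1)/2}=\nabla\KK_s[U_3]$ in the sense of distributions follows from the analogue of Lemma \ref{lemmaConvK} applied on each ball (note that for $R$ large enough $U_2$ agrees with a function defined on all of $\RN$ via zero extension, and $\KK_s$ is a convolution operator). Combining this with the strong convergence of $d_\mu(U_2)$ to $d_\mu(U_3)$ in $L^2_{\mathrm{loc}}$, the product converges in distributions, which is enough to pass to the limit in \eqref{weaksolAprox}.

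It then remains to collect the properties. The $L^\infty$ bound and the $L^1$ bound pass to the limit pointwise a.e. in $t$ from the uniform estimates. Conservation of mass is exactly the content of the preceding Lemma, whose proof (cutoff argument using that $\KK\Delta$ has homogeneity $2-2s$) I would invoke directly. For the first energy estimate I would start from \eqref{energyidentityR}, use the explicit computations \eqref{RinftyL1} and \eqref{RinftyL2} for the difference $\int_{B_R}F_\mu(\widehat u_0)dx-\int_{B_R}F_\mu(U_2(t))dx$, apply Fatou on the $U_2(t)$ side (which gives the right inequality sign because the $(U_3+\mu)^{3-m}-\mu^{3-m}$ terms are nonnegative when $m<3$ and nonpositive when $m>3$, matching the sign of $C(m)$), and weak lower semicontinuity in $L^2$ on the dissipative terms $\delta\int\!\int|\nabla U_2|^2/d_\mu(U_2)$ and $\int\!\int|\nabla\HH_s[U_2]|^2$; for the first one, convexity of $(a,b)\mapsto |a|^2/b$ on $\RR\times(0,\infty)$ guarantees weak-$L^2$ lower semicontinuity via Ioffe's theorem. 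The second energy estimate is handled similarly, using weak-$L^2$ lower semicontinuity of $\int\!\int(U_2+\mu)^{m-1}|\nabla\KK_s[U_2]|^2$ (again convex in $U_2$ jointly with $\nabla\KK_s[U_2]$ for $m\ge 2$; for $1<m<2$ one uses the factorization above) and of the $\HH_s$ norms. The main technical point of the whole argument is precisely this last identification of nonlinear fluxes under weak convergence on the unbounded domain $\RN$; everything else is a clean transcription of the $\epsilon\to 0$ machinery.
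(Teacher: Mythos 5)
Your proposal follows the paper's own line almost exactly: uniform-in-$R$ a priori bounds, Simon's compactness lemma for the strong $L^2_{\mathrm{loc}}$ limit, the Banach--Alaoglu plus factorization trick to identify the nonlinear flux (the same device used in the $\epsilon\to0$ passage), the cutoff-based conservation of mass lemma, and weak lower semicontinuity plus Fatou for the energy estimates. You are in fact a bit more careful than the paper in a couple of places, notably in stating $L^2_{\mathrm{loc}}$ rather than $L^2(\RN\times(0,T))$ for the Simon-type compactness (which is the honest conclusion on an unbounded domain) and in invoking Ioffe's theorem for lower semicontinuity of $\int\!\int|\nabla U_2|^2/d_\mu(U_2)$.

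One small inaccuracy worth flagging: your parenthetical about the Fatou direction being ``right because $(U_3+\mu)^{3-m}-\mu^{3-m}$ matches the sign of $C(m)$'' is not literally true across the whole range. The bracket is $\ge 0$ for $m<3$ and $\le 0$ for $m>3$, whereas $C=1/((2-m)(3-m))$ is positive for $m\in(1,2)$, negative for $m\in(2,3)$, and positive again for $m>3$; the signs coincide only on $(1,2)$. The cleaner way to get the one-sided inequality uniformly in $m$ is to apply Fatou directly to the nonnegative integrand $F_\mu(U_2(t))\ge 0$ on the right-hand side of \eqref{energyidentityR}, which gives $\limsup_R\,(-\int_{B_R}F_\mu(U_2(t)))\le -\int_{\RN}F_\mu(U_3(t))$, and only afterwards expand $F_\mu$ and cancel the linear terms using conservation of mass $\int U_3=\int u_0$. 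That yields \eqref{energy5} for all $m>1$, $m\neq 2,3$, without any case distinction on signs, and is what \eqref{RinftyL1}--\eqref{RinftyL2} are doing implicitly.
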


\subsection{Limit as $\mu\to 0$}

Similarly to the previous limits we can prove that $\displaystyle U_4=\lim_{\mu \to 0}U_3$ in $L^2(\R^N\times (0,T))$ when $m\in (1,3)$. Then $U_4$ will be a solution of problem
\[
(U_4)_t=\delta \Delta U_4 + \nabla\cdot\left( U_4^{m-1}\nabla \KK_s[U_4]\right) \ \ \ x\in \RN, \ \ t>0.
\tag{$P_{\delta}$}\label{ProblemDelta}\]
In order to pass to the limit, we need to find uniform bounds on $\mu>0$ for terms 3 and 4 of the energy estimates \eqref{energy4} and \eqref{energy5}.

\noindent\textbf{Uniform upper bounds }

\noindent$\bullet$ Case $m \in (1,2)$. By the Mean Value Theorem,
\begin{align*}
\frac{1}{(m-2)(3-m)}\int_{\RN}&\left[(u_0+\mu)^{3-m}-\mu^{3-m}\right]dx\leq \frac{1}{(m-2)}\int_{\RN} (u_0+\mu)^{2-m}u_0dx\\ &\leq \frac{(||u_0||_\infty+1)^{2-m}}{m-2}\int_{\RN} u_0dx.
\end{align*}
This bound is independent of $\mu$.

\noindent$\bullet$ Case $m \in (2,3)$. The function $f(\zeta)=\zeta^{3-m}$ is concave and so $f(U_3+\mu)\leq f(\mu)+f(U_3)$. In this way,
\[
\frac{1}{(2-m)(3-m)}\int_{\RN}\left[(U_3(t)+\mu)^{3-m}-\mu^{3-m}\right]dx \leq \frac{1}{(2-m)(3-m)}\int_{\RN} U_3(t)^{3-m}dx.
\]
The last integral is finite due to the exponential decay for $U_3$ that we proved in Section \ref{SectExponTail}. In this way, the last estimate is uniform in $\mu$.

\noindent\textbf{The limit is a solution of the new problem \eqref{ProblemDelta}}.
The argument from Section \ref{SubsectEps} does not apply for the limit
\begin{equation}\label{limitmu}
\int_0^T \int_{\R^N}(U_3+\mu)^{m-1}\nabla \KK_s[U_3]\nabla \phi dx dt \stackrel{\mu \to0}{\longrightarrow}\int_0^T \int_{\R^N}U_4^{m-1}\nabla \KK_s[U_4]\nabla \phi dx dt.
\end{equation}
In order to show that this convergence holds, we note that from the first energy estimate we get that
\[\nabla \HH_s [U_3] \in L^2((0,T): L^2(\R^N))\]
uniformly on $\mu$. Then $\nabla \KK_s[U_3] = \HH_s [\nabla \HH_s[U_3]]\in L^2((0,T): H^s(\R^N))$. Since for any bounded domain $\Omega$, $H^s(\Omega)$ is compactly embedded in $L^2(\Omega)$  then $\nabla \KK_s[U_3] \to \nabla \KK_s[U_4]$ as $\mu \to 0$ in $L^2(\Omega)$. Then we have the convergence \eqref{limitmu} since $U_3 \in L^\infty(\R^N)$ and $\phi$ is compactly supported.

\noindent\textbf{Remarks.} $\bullet$ In the case $m=2$ the corresponding term is $\intr U_3 \log^-(U_3+\mu)dx$ which is uniformly bounded if $U_3$ has an exponential tail. This has been proved by Caffarelli and V\'azquez in \cite{CaffVaz}. We do not repeat the proof here.

\noindent$\bullet$ The case $m\ge 3$ is more difficult since we can not find uniform estimates in $\mu>0$ for the energy estimates that allow us to pass to the limit.

\subsection{Limit as $\delta\to 0$}

We will prove that there exists a limit $u = \lim_{\delta \to 0}U_4 $ in $L^2(\R^N\times (0,T))$
and that $u(x,t)$ is a weak solution to Problem \eqref{model1}. Thus, we conclude the proof of Theorem \ref{Thm1PMFP} stated in the introduction of this chapter.

We comment on the differences that appear in this case.
From the first energy estimate we have that
\[
 \delta\int_0^T\int_{\RN}\frac{|\nabla U_4|^2}{U_4^{m-1}}dxdt\leq C(m,u_0),
\]
which gives us that $\delta \nabla U_4 \in L^2(Q_T)$ since $U_4\in L^\infty(Q_T)$. Then, as in Section \ref{SubsectEps}, we have that $\delta \Delta U_4 \in H^{-1}(\RN)$ uniformly in $\delta$.  Also $\nabla(U_4^{m-1}\nabla \KK_s[U_4])\in H^{-1}(\RN) $ as before.  Then $(U_4)_t \in H^{-1}(\RN)$ independently on $\delta$. Therefore we use the compactness argument of Simon
to obtain that there exists a limit
\[ U_4(x,t) \to u(x,t) \quad L^2((0,T)\times \RN).
\]
Now we show that $u$ is the weak solution of Problem \eqref{model1}.  It is trivial that $\delta\int_0^T\int_{\R^N} U_4 \Delta \phi \to0$ as $\delta \to 0$. On the other hand, $\nabla \KK_s[U_4]=\HH_s[\nabla \HH_s[U_4]]\in L^2_{loc}(Q_T)$ uniformly on $\delta>0$ since $\nabla \HH_s[U_4]\in L^2(Q_T)$ uniformly on $\delta>0$. In this way, $\nabla \KK_s[U_4]$ has a weak limit in $L^2_{loc}(Q_T)$. As in Lemma \ref{lemmaConvK} (2) we can identify this limit and so on,  $\nabla \KK_s[U_4]\to \nabla \KK_s[u] $ weakly  in $L^2_{loc}(Q_T)$ as $\delta \to 0$ and therefore
\[
\int_0^T \int_{\R^N}U_4^{m-1}\nabla \KK_s[U_4]\nabla \phi dx dt \stackrel{\delta \to0}{\longrightarrow}\int_0^T \int_{\R^N}u^{m-1}\nabla \KK_s[u]\nabla \phi dx dt.
\]
since $U_4^{m-1}\to u^{m-1}$ in $L^2_{loc}(Q_T)$ as $\delta \to0$.

\subsection{Dealing with the case $N=1$ and $1/2<s<1$}\label{SubsectN1}
As we have commented before, the operator $\KK_s$ is not well defined when $N=1$ and $1/2<s<1$ since the kernel $|x|^{1-2s}$ does not decay at infinity, indeed it grows.  It makes no sense to think of equation \eqref{casen1} in terms of a pressure as before. This is maybe not very convenient, but it is not an essential problem, since equation \eqref{model1} can be considered in the following sense:
\begin{equation}\label{casen1}
  u_{t}(t,x) = \nabla \cdot \left(u^{m-1} (\nabla \KK_s)[u]\right)  \text{for } x \in \RN, \, t>0,
\end{equation}
where the combined operator $(\nabla \KK_s)$ is defined as the convolution operator
\[
(\nabla \KK_s)[u]:=(\nabla K_s)*u \quad \text{ with } \quad K_s(x)=\frac{c_s}{|x|^{1-2s}}.
\]
Other authors that dealt with $N=1$ have considered  operator $(\nabla \KK_s)$ before. They use the notation $\nabla^{2s-1}$ to refer to it. Note that
\[
\nabla K_s(x) = (-1+2s)c_s\frac{x}{|x|^{3-2s}},
\]
and so,  $\nabla K_s\in L^1_{loc}(\R)$ for $N=1$ and $1/2<s<1$. Moreover, $(\nabla \KK_s)$ is an integral operator in this range. As in Subsection \ref{aproxinvlap}, the operator $(\nabla \KK_s)$ is approximated by $(\nabla \KK_s)^{\epsilon}$ defined as
\[
(\nabla \KK_s)^{\epsilon}[u]=(\nabla K_s)^\epsilon*u \quad \text{ where } \quad  (\nabla K_s)^\epsilon= \rho_\epsilon*(\nabla K_s).
\]
Note that
\begin{equation}\label{conveps2}
(\nabla K_s)^\epsilon \stackrel{\epsilon \to 0}{\longrightarrow} \nabla K_s\quad  \text{in} \quad L^1_{loc}(\R),
\end{equation}
since $\nabla K_s\in L^1_{loc}(\R)$. It is still true that
\[
(\nabla \KK_s)[u]=\HH_s\left[ \nabla \HH_s [u]\right],
\]
since the operator $\HH_s$ is well defined for any $s\in(0,1)$ even in dimension $N=1$.

In this way, almost all the arguments from Section \ref{limitssec} apply by replacing $\nabla (\KK_s(u))$ for $(\nabla \KK_s)(u)$. The only exception is Lemma \ref{lemmaConvK} where the weak $L^2(\R)$ limit of $\nabla K_s [U_1]$ is identified. This argument is replaced by the following Lemma:
\begin{lemma} Let N=1 and $1/2<s<1$. Then
\[
\displaystyle \int_0^T \int_{B_R}  U_1  (\nabla \KK_s)^\epsilon [\psi] \, dx dt  \stackrel{\epsilon \to 0}{\longrightarrow}  \int_0^T \int_{B_R} U_2 (\nabla\KK_s)[ \psi] \, dx dt \qquad \forall \psi \in C_c^\infty(Q_T).
\]
\end{lemma}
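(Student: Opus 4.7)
The plan is to split the difference into two pieces that can be controlled separately. Write
\[
\int_0^T \int_{B_R} U_1 (\nabla\KK_s)^\epsilon[\psi]\,dx\,dt - \int_0^T \int_{B_R} U_2 (\nabla\KK_s)[\psi]\,dx\,dt = A_\epsilon + B_\epsilon,
\]
where
\[
A_\epsilon := \int_0^T \int_{B_R} (U_1 - U_2)(\nabla\KK_s)^\epsilon[\psi]\,dx\,dt,
\]
\[
B_\epsilon := \int_0^T \int_{B_R} U_2 \bigl((\nabla\KK_s)^\epsilon[\psi] - (\nabla\KK_s)[\psi]\bigr)\,dx\,dt.
\]

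The key preliminary step is a uniform $L^\infty$ bound on $(\nabla\KK_s)^\epsilon[\psi]$ independent of $\epsilon$. Using associativity of convolution, $(\nabla\KK_s)^\epsilon[\psi] = \rho_\epsilon * \bigl((\nabla K_s)*\psi\bigr)$, so it suffices to bound $(\nabla K_s)*\psi$. Since $\psi \in C_c^\infty(Q_T)$ its spatial support lies in some interval $[-M,M]$; moreover $|\nabla K_s(x)|= (2s-1)c_s|x|^{2s-2}$ has a singularity of order $2s-2>-1$ at the origin (integrable in dimension one precisely because $s>1/2$) and decays to zero at infinity. A standard splitting argument (separating $|x|\le 2M$ from $|x|>2M$) shows $(\nabla K_s)*\psi(\cdot,t)\in L^\infty(\R)$ with a bound uniform in $t\in(0,T)$, and since $\|\rho_\epsilon\|_{L^1}=1$ the bound transfers to $(\nabla\KK_s)^\epsilon[\psi]$.

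With this uniform bound in place, $A_\epsilon$ is controlled by H\"older's inequality: $|A_\epsilon|\le \|U_1-U_2\|_{L^1(B_R\times(0,T))}\|(\nabla\KK_s)^\epsilon[\psi]\|_{L^\infty}$. The convergence \eqref{conv1} in $L^2(B_R\times(0,T))$, combined with the uniform $L^\infty$ bound $\|U_1\|_\infty\le \|u_0\|_\infty$ (and the boundedness of $B_R\times(0,T)$), yields $U_1\to U_2$ in $L^1(B_R\times(0,T))$, and hence $A_\epsilon\to 0$. For $B_\epsilon$ I would write $(\nabla\KK_s)^\epsilon[\psi] - (\nabla\KK_s)[\psi] = \bigl((\nabla K_s)^\epsilon - \nabla K_s\bigr)*\psi$; by \eqref{conveps2} and the compact support of $\psi$, this difference tends to zero pointwise (in fact uniformly on compact sets) while remaining uniformly dominated. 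Since $U_2\in L^\infty\cap L^1$ on the bounded set $B_R\times(0,T)$, the dominated convergence theorem gives $B_\epsilon\to 0$.

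The principal technical point is the uniform $L^\infty$ bound on $(\nabla\KK_s)^\epsilon[\psi]$: one must check both that the singularity of $\nabla K_s$ at the origin is locally integrable (exactly what $s>1/2$ provides in dimension one) and that its decay at infinity, although not strong enough to make $\nabla K_s\in L^1(\R)$, still suffices to control the convolution with a compactly supported smooth function globally. Once this is done, both $A_\epsilon$ and $B_\epsilon$ reduce to standard approximate-identity arguments.
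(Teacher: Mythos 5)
Your proof is correct and follows the same route as the paper: the identical two-term decomposition, the uniform $L^\infty$ bound on $(\nabla\KK_s)^\epsilon[\psi]$ to handle the first term via the $L^2$ (hence $L^1$) convergence $U_1\to U_2$, and \eqref{conveps2} to handle the second. You supply more detail than the paper on why $(\nabla K_s)*\psi$ is bounded when $s>1/2$ in one dimension, but the argument is the same.
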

\begin{proof}
\begin{equation*}
\begin{split}
\int_0^T \int_{B_R}  U_1  (\nabla \KK_s)^\epsilon [\psi] -U_2 (\nabla\KK_s)[ \psi] \, dx dt&=\int_0^T \int_{B_R}  (U_1 -U_2) (\nabla \KK_s)^\epsilon [\psi]  \, dx dt\\
&+\int_0^T \int_{B_R}  U_2 \big( (\nabla \KK_s)^\epsilon [\psi]-(\nabla\KK_s)[ \psi]\big) \, dx dt.
\end{split}
\end{equation*}
The first integral on the right hand side goes to zero with $\epsilon$ since $||(\nabla \KK_s)^\epsilon [\psi]||_{L^\infty (\R)}\leq K$ for some positive constant $K$ which does not depend on $\epsilon$ and $U_1\to U_2$ as $\epsilon \to 0$ in $L^2(B_R\times (0,T))$. The second integral also goes to zero as consequence of \eqref{conveps2} and the fact that $U_2 \in L^\infty(\R)$ uniformly on $\epsilon$.
\end{proof}

\section{Finite propagation property for $m\in [2,3)$}\label{SectionFiniteProp}

In this section we will prove that compactly supported initial data $u_0(x)$ determine the solutions $u(x,t)$ that have the same property for all positive times.

\begin{theorem}\label{ThmBarrierParabola}
Let $m\ge 2$. Assume $u$ is a bounded solution, $0\le u\le L$, of Equation \eqref{model1} with $\mathcal{K}=(-\Delta)^{-s}$ with $0<s<1$ ($0<s<1/2$ if $N=1$), as constructed in Theorem \ref{Thm2PMFP}. Assume that $u_0$ has compact support. Then $u(\cdot,t)$ is compactly supported for all $t>0$. More precisely, if $0<s<1/2$ and $u_0$ is below the ''parabola-like'' function
$$U_0(x)=a(|x|-b)^2,$$
for some $a,b>0$, with support in the ball $B_b(0)$, then there is a constant $C$ large enough, such that
$$
u(x,t) \le a(Ct-(|x|-b))^2.
$$
Actually, we can take $\displaystyle{C(L,a)=C(1,1)L^{m-\frac{3}{2}+s}a^{\frac{1}{2} -s}}$. For $1/2\le s <1$ a similar conclusion is true, but $C=C(t)$ is an increasing function of $t$ and we do not obtain a scaling dependence of $L$ and $a$.
\end{theorem}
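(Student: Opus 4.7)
The plan is to adapt the \emph{true supersolution} contact argument of \cite{CaffVaz} to the present setting, running it on the smooth approximants $U_1$ from Section \ref{SectionExistI} and then passing to the limit. I first reduce to $L = a = 1$ via the rescaling $u(x,t) = L\,\tilde u(\lambda x, \tau t)$ with $\lambda = (L/a)^{1/2}$ and $\tau = \lambda^{2-2s}L^{1-m}$; a barrier $\widehat{\tilde U} = (\tilde C \tilde t - (|\tilde x| - \tilde b))_+^{2}$ for $\tilde u$ transports back to $a(Ct-(|x|-b))^2$ with $C = \tilde C\, \lambda/\tau = \tilde C\, L^{m-1}\lambda^{2s-1} = \tilde C\,L^{m-3/2+s}\,a^{1/2-s}$, which matches the scaling in the statement. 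From now on $L = a = 1$.

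For the barrier and first contact, set
$$
\widehat U(x,t) = \bigl(Ct + b - |x|\bigr)_+^{2} + h e^{\eta t},
$$
with small $h, \eta > 0$ and $C$ to be chosen, and consider the first time $t_c$ and point $x_c$ at which the approximation $U_1$ touches $\widehat U$. The exponential tail control of Section \ref{SectExponTail} prevents a contact at infinity, and the perturbation $h e^{\eta t}$ prevents a "soft" contact at the parabola's boundary, so the touching is at an interior point with $\rho_c := Ct_c + b - |x_c| > 0$. At $(x_c,t_c)$ smoothness of $U_1$ gives the pointwise relations
$$
(U_1)_t \ge 2C\rho_c + \eta h e^{\eta t_c}, \qquad \nabla U_1 = -2\rho_c\,\hat x_c, \qquad \Delta U_1 \le 2 - \tfrac{2(N-1)}{|x_c|}\rho_c.
$$

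Plugging these into the non-divergence form $(U_1)_t = \delta\Delta U_1 + (m-1)(U_1+\mu)^{m-2}\nabla U_1 \cdot \nabla p + (U_1+\mu)^{m-1}\Delta p$, with $p = \KK_s^{\epsilon}[U_1]$, and using the directional-pressure bounds $|{-\partial_r p}|\le K_2$, $\Delta p \le K_1$ of \cite{CaffVaz} (which carry over to the regularized kernel with $K_i$ depending only on $N,s,\|u_0\|_1,\|u_0\|_\infty$ via conservation of mass and the $L^\infty$ bound), one obtains at $(x_c,t_c)$ the scalar inequality
$$
2C\rho_c + \eta h e^{\eta t_c} \le 2\delta + 2(m-1)\rho_c(U_1+\mu)^{m-2}K_2 + (U_1+\mu)^{m-1}K_1.
$$
The regime $\rho_c$ bounded below is easy: dividing by $2\rho_c$ and using $U_1 + \mu \le 2$, $m \ge 2$, one picks $C$ larger than a constant depending only on $m, K_1, K_2$. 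The delicate regime is $\rho_c \to 0$, where $U_1(x_c,t_c) = \rho_c^{2} + he^{\eta t_c}$ is small; here $m \ge 2$ is crucial because it makes the power factors $(U_1+\mu)^{m-2}$ and $(U_1+\mu)^{m-1}$ small in $\rho_c, h, \mu$, so the only non-negligible term on the right is $2\delta$, which can be absorbed by $\eta h e^{\eta t_c}$ for $\eta$ chosen large enough independently of $\delta$. Thus for $C$ sufficiently large (depending only on $m, K_1, K_2$) the inequality is contradicted; hence $U_1 \le \widehat U$ pointwise, and letting first $h \to 0$ and then $\epsilon, R, \mu, \delta$ to their limits yields $u(x,t) \le (Ct + b - |x|)_+^{2}$ on $Q_T$.

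For $1/2 \le s < 1$ the global gradient bound on $p$ is lost; \cite{CaffVaz} replaces it by the non-local $|\partial_r p|\le K_1' + K_2'\|U_1(t)\|_1^{1/q}\|U_1(t)\|_\infty^{(q-1)/q}$ with $q < N/(2s-1)$, still uniform in the regularization parameters but now genuinely time-dependent. Iterating the barrier argument on successive short time slices, exactly as in the proof of \eqref{EstUpperExp}, gives a $C(t)$ that is increasing in $t$ and loses the clean dependence on $(L,a)$. The main obstacle throughout is \emph{uniformity}: every constant produced by the contact estimate must be independent of $\epsilon, \delta, \mu, R$ so that the pointwise bound survives the limit, and the exponential tail of Section \ref{SectExponTail} is indispensable for confining the first contact to a bounded region of $\RN$. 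These two demands are what force the argument to be carried out on the smooth approximants rather than directly on the (only weakly convergent) constructed solution $u$.
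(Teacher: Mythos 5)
The student takes the same contact--barrier method as the paper and the reduction to $L=a=1$ and the resulting dependence $C=C(1,1)L^{m-\frac{3}{2}+s}a^{\frac{1}{2}-s}$ match the statement, but the barrier is built differently: instead of the paper's two-step structure (the bare parabola $U=a(Ct-(|x|-b))^2$ in the theorem, and the separate vertically shifted barrier $U_\epsilon$ with the flat truncation in Lemma \ref{LemmaNoContactBdry}), the student uses a single perturbed barrier $\widehat U=(Ct+b-|x|)_+^{2}+he^{\eta t}$. This is a plausible alternative route, but as written it has three concrete gaps.

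\emph{(i) Contact in the flat region is not ruled out.} You assert that ``the perturbation $he^{\eta t}$ prevents a soft contact at the parabola's boundary, so the touching is at an interior point with $\rho_c>0$.'' The perturbation only excludes a contact where $\widehat U=0$; it does not exclude a contact at $|x_c|>Ct_c+b$, where $\widehat U\equiv he^{\eta t}$ is flat. There you must run the analogue of the paper's Lemma \ref{LemmaNoContactBdry}: at such a point $\nabla \widehat U=0$, $\Delta\widehat U=0$, so $\eta he^{\eta t_c}\le \delta\Delta U_1+(U_1+\mu)^{m-1}\overline{\Delta p}\le (he^{\eta t_c}+\mu)^{m-1}K_1$, and one argues from there. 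This case is a genuine part of the argument, and it is also where the time restriction and iteration enter in the paper.

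\emph{(ii) The regime split does not yield a uniform $C$.} In the ``easy'' regime you say to divide by $2\rho_c$ and use $U_1+\mu\le 2$; this gives a term $\tfrac{(U_1+\mu)^{m-1}K_1}{2\rho_c}\le \tfrac{2^{m-1}K_1}{2\rho_c}$ on the right, which blows up as $\rho_c\to 0$, so the $C$ you obtain depends on the cutoff between the regimes. The paper avoids this entirely by using the exact relation $u(x_c,t_c)=h^2$, so $u^{m-1}\overline{\Delta p}=h^{2m-2}\overline{\Delta p}$ and, after dividing by $h$, one gets the bounded factor $h^{2m-3}$ (and similarly $h^{2m-4}$ from the gradient term), which is the key $m\ge 2$ mechanism. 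In your setup the corresponding estimate would be $(U_1+\mu)^{m-1}\le C'\bigl(\rho_c^{2}+he^{\eta t_c}+\mu\bigr)^{m-1}$, and one has to show this is $O(\rho_c)$ when $\rho_c^2$ dominates and $O(he^{\eta t_c})$ when $he^{\eta t_c}+\mu$ dominates, for $m\ge 2$ and $\rho_c\le 1$; that computation is missing. As written, ``the power factors are small'' also fails literally for $m=2$ (where $(U_1+\mu)^{m-2}\equiv 1$); what is actually small is the \emph{product} $\rho_c(U_1+\mu)^{m-2}$.

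\emph{(iii) Order of limits.} You say to let ``first $h\to 0$ and then $\epsilon,R,\mu,\delta$.'' This is the wrong order: the contradiction in both the flat region and the $\rho_c\to 0$ regime needs $\delta$ and $\mu$ small compared to $h$ (you require roughly $\eta h>2\delta+\mu K_1$). So one must keep $h,\eta,C$ fixed, pass to the approximation limits $\epsilon,R,\mu,\delta$, obtain $u\le\widehat U$, and only then send $h\to 0$ to get $u\le (Ct+b-|x|)_+^2$.

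Finally, two minor points: the pressure gradient bound cited in the paper at the parabola contact is $-\overline{p_r}\le K_1+K_2h^{1+2s}+K_3h$ (from Theorem~5.1 of \cite{CaffVaz}), not just a constant; using a cruder constant bound (as you do) is admissible for $h\le 1$, but one should say so. And the intermediate scaling parameters $\lambda,\tau$ you write down contain sign/reciprocal slips that happen to cancel in the final formula for $C$; it is worth re-deriving these carefully (the consistency check is $L\lambda^2=a$ and $\tau=L^{m-1}\lambda^{2-2s}$ when $u(x,t)=L\tilde u(\lambda x,\tau t)$).
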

\begin{proof}

The method is similar to the tail control section. We assume $u(x,t)\ge 0$ has bounded initial data $u_0(x)=u(x,t_0) \le L$, and also that $u_0$ is below the parabola $U_0(x)=a(|x|-b)^2$, $a,b>0.$ Moreover the support of $U_0$ is the ball of radius $b$ and the graphs of $u_0$ and $U_0$ are strictly separated in that ball. We take as comparison function $U(x,t)=a(Ct-(|x|-b))^2$ and argue at the first point in space and time where $u(x,t)$ touches $U$ from below. The fact that such a first contact point happens for $t>0$ and $x\ne \infty$ is justified by regularization, as before. We put $r=|x|.$

By scaling we may put $a=L=1$. We denote by $(x_c,t_c)$ this contact point where we have $u(x_c,t_c)=U(x_c,t_c)=(b+Ct_c-|x_c|)^2 .$ The contact can not be at the vanishing point $|x_f(t_c)|:=b+Ct_c$ of the barrier and this will be proved in  Lemma \ref{LemmaNoContactBdry}. We consider that $x_c$ lies at a distance $h>0$ from $|x_f(t_c)|=b+Ct_c$ (the boundary of the support of the parabola $U(x,t)$ at time $t_c$), that is
$$b+Ct_c-|x_c|=h>0.$$ Note that since $u \le 1$ we must have $|h|\le 1$. Assuming that $u$ is also $C^2$ smooth, since we deal with a first contact point $(x_c,t_c)$, we have that $u=U$, $\nabla (u-U)=0$, $\Delta(u-U) \le 0$, $(u-U)_t \ge 0,$ that is
$$
u(x_c,t_c)=h^2,\quad u_r=-2h, \quad \Delta u \le 2N, \quad u_t\ge 2Ch.
$$
For $p=\mathcal{K}_s(u)$ and using the equation $u_t=(m-1)u^{m-2}\nabla u \cdot \nabla p + u^{m-1}\Delta p$, we get the inequality
\begin{equation}\label{ineq1PMFP}
2Ch\le 2(m-1)h^{2m-3}\left(-\overline{p_r} + \frac{h}{2}\overline{\Delta p}\right) ,
\end{equation}
where $\overline{p_r}$ and $\overline{\Delta p} $ are the values of $p_r$ and $\Delta p$ at the point $(x_c,t_c)$.
In order to  get a contradiction, we will use estimates for the values of $\overline{p_r}$ and $\overline{\Delta p} $ already proved in \cite{CaffVaz} (see Theorem 5.1. of \cite{CaffVaz})
\begin{equation}\label{ineq2}
-\overline{p_r} \le K_1 + K_2 h^{1+2s} +K_3 h, \quad \overline{\Delta p} \le K_4.
\end{equation}
Therefore, inequality \eqref{ineq1PMFP} combined with the estimates \eqref{ineq2} implies that
\begin{equation}\label{ineq5}
2C\le 2(m-1)h^{2m-4}\left( K_1 + K_2h^{1+2s} + Kh\right),
\end{equation}
which is impossible for $C$ large (independent of $h$), since $m>2$ and $|h|\le 1$.  Therefore, there cannot be a contact point with $h\ne 0$. In this way we get a minimal constant $C=C(N,s)$ for which such contact does not take place.

\noindent Remark: For $m<2$, we do not obtain a contradiction in the estimate \eqref{ineq5}, since the term $K_1h^{2m-4}$ can be very large for small values of $|h|$.

\medskip

\noindent$\bullet$ \textbf{Reduction. Dependence on $L$ and $a$. }The equation is invariant under the scaling
\begin{equation}\label{scaling}
\widehat{u}(x,t)=Au(Bx,Tt)
\end{equation}
with parameters $A,B,T>0$ such that $T=A^{m-1}B^{2-2s}$.

\noindent {\sc Step I.} We prove that if $u$ has height $0\le u(x,t)\le 1$ and initially satisfies $u(x,0)=u_0(x)\le (|x|-b)^2$ then $u(x,t) \le U(x,t)=(Ct-(|x|-b))^2$ for all $t>0$.

\noindent {\sc Step II.} We search for parameters $A,\ B,\ T$ for which the function $\widehat{u}$ is defined by \eqref{scaling} satisfies
 $$0\le \widehat u(x,t)\le L,\quad \widehat u(x,0)\le \widehat{a}(|x|-\widehat{b})^2.$$
 An easy computation gives us
 $$
 A=L, \quad AB^2=\widehat{a}, \quad \widehat{b}=b/B.
$$
Moreover, by the relation between $A,B$ and $T$ we obtain $A=L$, $B=(\widehat{a}/L)^{1/2}$ and then $T=L^{m-2+s}\widehat{a}^{1-s}.$
Then $\widehat{u}(x,t)$ is below the upper barrier $\widehat{U}(x,t)=\widehat{a}(\widehat{C}t-(|x|-\widehat{b}))^2$ where the new speed is given by
$$
\widehat{C}=CA^{m-1}B^{1-2s}=CL^{m-\frac{3}{2}+s}\ \widehat{a}^{\frac{1}{2} -s}.
$$

\noindent$\bullet$ \textbf{Case $1/2\le s <1$.} The proof relies on estimating the term $\partial_r p$ at a possible contact point. This is independent on $m$ and it was done in \cite{CaffVaz}.
\end{proof}

\begin{lemma}\label{LemmaNoContactBdry}
Under the assumptions of Theorem \ref{ThmBarrierParabola} there is no contact between $u(x,t)$ and the parabola $U(x,t)$, in the sense that strict separation of $u$ and $U$ holds for all $t>0$ if $C$ is large enough.
\end{lemma}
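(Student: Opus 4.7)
The plan is to rule out contact at the boundary $\{|x|=b+Ct\}$ of the parabola's support (the case $h=0$ excluded from the main argument of Theorem~\ref{ThmBarrierParabola}) by augmenting the parabolic barrier with the exponential tail barrier from Section~\ref{SectExponTail}, and then invoking the pointwise analysis already performed in the main theorem.

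First I would work with the smooth, strictly positive approximation $v=U_4$ constructed in Section~\ref{limitssec}. Since $u_0$ is compactly supported in $B_b$, we have $u_0(x)\le \|u_0\|_\infty e^{b-|x|}$; the tail control theorem of Section~\ref{SectExponTail} then yields constants $A,K>0$ such that
\[
v(x,t)\le A e^{Kt-|x|}\qquad\text{for all } (x,t)\in \RN\times[0,T].
\]
For a parameter $\eta>1$ I define the augmented upper barrier
\[
W_\eta(x,t):=U(x,t)+\eta A e^{Kt-|x|},
\]
which by construction strictly dominates $v(\cdot,0)$ everywhere.

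Next I would argue by contradiction: suppose $v$ first touches $W_\eta$ at some point $(x_c,t_c)$ with $t_c\in(0,T]$, and set $h:=b+Ct_c-|x_c|$. If $h\le 0$, then $U(x_c,t_c)=0$ and the contact equation forces $v(x_c,t_c)=\eta A e^{Kt_c-|x_c|}$, which contradicts the tail bound $v(x_c,t_c)\le A e^{Kt_c-|x_c|}$ because $\eta>1$. Hence necessarily $h>0$, and I would carry out the pointwise analysis of Theorem~\ref{ThmBarrierParabola} at $(x_c,t_c)$ for $v$ versus $W_\eta$: the exponential correction contributes only lower-order perturbations to $v,\nabla v,\Delta v,v_t$ at the contact point that can be absorbed into the constants $K_1,\dots,K_4$ appearing in \eqref{ineq2}. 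The contradictory inequality \eqref{ineq5} then persists for $C$ sufficiently large, uniformly in $\eta\in(1,2]$.

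Finally, letting $\eta\downarrow 1$ and passing to the remaining regularization limits ($\delta\to 0$) gives $u\le U+A e^{Kt-|x|}$. To extract the announced strict separation I would replace $U$ by a slightly faster parabola $U_\sigma$ (of speed $C+\sigma$), apply the same argument, and let $\sigma\downarrow 0$. The main technical obstacle I anticipate lies in the perturbative step: one must check carefully, uniformly in $\eta\in(1,2]$, that the bounds on $\overline{p_r}$ and $\overline{\Delta p}$ borrowed from \cite{CaffVaz} retain the $h$-dependence that drives the contradiction in \eqref{ineq5}, since augmenting the barrier also perturbs the associated pressure at the contact point. This is bookkeeping, but nontrivial.
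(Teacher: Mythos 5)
There is a genuine gap in your proposal: after letting $\eta\downarrow 1$ you only obtain $u\le U+Ae^{Kt-|x|}$, and this extra exponential term cannot be removed. The whole point of the lemma is to recover a \emph{compactly supported} upper barrier, i.e.\ $u\le U$, which then yields finite propagation; a bound of the form $U+Ae^{Kt-|x|}$ is merely an exponential tail estimate and proves nothing new. Your $\eta$-mechanism forces $\eta>1$ (otherwise the contradiction at $h\le0$, which rests on $v\le Ae^{Kt-|x|}$, does not fire), so the auxiliary tail can never be made smaller than $Ae^{Kt-|x|}$. Replacing $U$ by a slightly faster parabola $U_\sigma$ and letting $\sigma\downarrow 0$ does not help: applying your argument to $U_\sigma$ still produces $u\le U_\sigma+Ae^{Kt-|x|}$, and the $\sigma\to0$ limit is again $u\le U+Ae^{Kt-|x|}$. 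Note also that with the strictly positive approximation $v=U_4$ one has $v>0=U$ outside the support ball, so a bound $v\le U$ literally cannot hold for the approximations; the auxiliary term must therefore be present at the approximate level but must \emph{vanish} in the limit, which your exponential term cannot do.

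The paper resolves this by augmenting the parabola with a \emph{spatially flat}, time-increasing penalty $\epsilon(1+Dt)$ (truncating the parabola below at that level outside the support ball). Because this correction has no spatial dependence, the interior contact analysis of Theorem~\ref{ThmBarrierParabola} carries over verbatim, with no perturbation bookkeeping. At a contact point in the flat region, the spatial derivatives of the barrier vanish and the equation itself gives $D\epsilon\le(\epsilon(1+Dt_c))^{m-1}\overline{\Delta p}\le(\epsilon(1+Dt_c))^{m-1}K$; dividing by $\epsilon$ and using $m\ge2$, $\epsilon<1$ yields $D\le(1+Dt_c)^{m-1}K$, which is contradictory for $D=2K$ and $t_c<(2^{1/(m-1)}-1)/(2K)$. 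Crucially, this contradiction is uniform in $\epsilon$, so one can pass to the limit $\epsilon\to0$ and then iterate in time. Your approach also leaves the interior perturbation step ($h>0$) to nontrivial bookkeeping, which you acknowledge; in the paper's proof this issue simply does not arise because the augmentation is constant in $x$.
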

\begin{proof}
We want to eliminate the possible contact of the supports at the lower part of the parabola, that is the minimum $|x|=Ct+b$. Instead of analyzing the possible contact point, we proceed by a change in the test function that we replace by
\begin{equation*}
U_\epsilon(x,t)=
\left\{
\begin{array}{ll}
 (Ct-(|x|-b))^2 + \epsilon(1+Dt)&\text{for }|x|\le b+Ct,\\[2mm]
\epsilon(1+Dt),&\text{for }|x|\ge b+Ct.
\end{array}
\right.
\end{equation*} The function $U_\epsilon$ is constructed from the parabola $U$ by a vertical translation $\epsilon(1+Dt)$ and a lower truncation with $1+Dt$ outside the ball $\{|x|\le b+Ct\}$.
Here $0<\epsilon<1$ is a small constant and $D>0$ will be suitable chosen.

We assume that the solution $u(x,t)$ starts as $u(x,0)=u_0(x)$ and touches for the first time the parabola $U_\epsilon$ at $t=t_c$ and spatial coordinate $x_c$.  The contact point can not be a ball $\{|x|\le b+Ct\}$ since $U_\epsilon$ is a parabola here and this case was eliminated in the previous Theorem \ref{ThmBarrierParabola}.
Consider now the case when the first contact point between $u(x,t)$ and $U_\epsilon(x,t)$ is when $|x_c|\ge b+Ct_c$. At the contact point we have that $u=U_\epsilon$, $\nabla (u-U_\epsilon)=0$, $\Delta(u-U_\epsilon) \le 0$, $(u-U_\epsilon)_t \ge 0.$ In this region the spatial derivatives of $U_\epsilon$ are zero, hence the equation gives us
$$
D\epsilon =(\epsilon(1+Dt_c))^{m-1} \overline{\Delta p},
$$
where $\overline{\Delta p}$ is the value of $\Delta p=(-\Delta )^{1-s}u $ at the point $(x_c,t_c)$.
 Since $\epsilon$ is small we get that the bound $u(x,t)\le U_1(x,t)$ is true for all $|x|\le \RN$.  This allows us to prove that that $\overline{\Delta p}$ is bounded by a constant $K$.  We obtain that $ D\epsilon \le (\epsilon(1+Dt_c))^{m-1}K.$ Since $m\ge 2$ and $\epsilon <1$, this implies that $$
D \le (1+Dt_c)^{m-1}K.
$$
We obtain a contradiction for large $D$, for example $D=2K$, and for
$$t_c < T_c=\frac{1}{2K} \left(2^{1/(m-1)}-1\right).$$
Therefore, we proved that a contact point between $u$ and $U_\epsilon$ is not posible for $t< T_c$, and thus $u(x,t) \le U_\epsilon(x,t)$ for $t<T_c$. The estimate on $t_c$ is uniform in $\epsilon$ and we obtain in the limit $\epsilon \to 0$ that
$$u(x,t) \le U(x,t)=(Ct-(|x|-b))\quad \text{for }t<\frac{1}{2K} \left(2^{1/(m-1)}-1\right).$$
As a consequence, the support of $u(x,t)$ is bounded by the line $|x|=Ct+b$ in the time interval $[0,T_c)$.
The comparison for all times can be proved with an iteration process in time.

\noindent$\bullet$ Regularity requirements. Using the smooth solutions of the approximate equations, the previous conclusions hold for any constructed weak solution.

\end{proof}

\noindent\textbf{Remark.} The following result about the free boundary is valid only for $s<1/2$ and for solutions with bounded and compactly supported initial data. The result is a direct consequence of the parabolic barrier study done in the previous section. Since that barrier does not depend explicitly on $m$ if $m\geq2$, the proof presented in \cite{CaffVaz} is valid here. By free boundary $\mathcal{FB}(u)$ we mean, the topological boundary of the support of the solution $S(u)=\overline{\{(x,t): u(x,t)>0\}}$.

\begin{cor}[\textbf{Growth estimates of the support}]
Let $u_0$ be bounded with $u_0(x)=0$ for $|x|>R$ for some $R>0$. If $(x,t) \in \mathcal{FB}(u)$ then $ x \leq R + C t^{1/(2-2s)}$, where  $C=C(\|u_0\|_{\infty},N, s)$.
\end{cor}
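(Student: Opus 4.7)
My plan is to deduce the estimate directly from the parabolic upper barrier of Theorem \ref{ThmBarrierParabola}, by enclosing the initial datum inside a single radial parabola centered at the origin and then optimizing the two free parameters that describe that parabola.

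First, set $L := \|u_0\|_\infty$ and, for parameters $a>0$ and $b>R$ to be chosen, consider the function $U_0(x)=a(|x|-b)^2\chi_{B_b(0)}(x)$. Whenever $a(b-R)^2\ge L$ one has $U_0\ge u_0$ pointwise: on $B_R$ the estimate $a(|x|-b)^2\ge a(b-R)^2\ge L\ge u_0(x)$ applies, on the annulus $R\le|x|\le b$ the right-hand side is nonnegative while $u_0\equiv0$, and outside $B_b$ both functions vanish. Thus $U_0$ is a legitimate initial comparison function in the sense of Theorem \ref{ThmBarrierParabola}.

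Second, applying that theorem to the constructed weak solution $u$ (legitimate because the barrier argument is carried out on the smooth approximations $U_1$ and passes to the limit) yields
\begin{equation*}
u(x,t)\le a\bigl(Ct-(|x|-b)\bigr)^2 \quad\text{for }|x|\le b+Ct,
\end{equation*}
and $u(x,t)=0$ for $|x|\ge b+Ct$, with speed $C=C_0(N,s)\,L^{m-3/2+s}a^{1/2-s}$. In particular any free-boundary point at time $t$ satisfies $|x|\le b+Ct$.

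Third, I optimize. Saturating the constraint as $a(b-R)^2=L$ and writing $h:=b-R>0$, one gets $a=L/h^2$ and $C=C_0 L^{m-1}h^{2s-1}$, so the support bound becomes
\begin{equation*}
R+h+C_0\,L^{m-1}\,h^{\,2s-1}\,t.
\end{equation*}
Differentiating in $h$ and using $s<1/2$ so that $1-2s>0$, the minimizer is $h^{2-2s}=(1-2s)C_0 L^{m-1}t$. Substituting back, both $h$ and $Ct$ scale like $t^{1/(2-2s)}$, and combining constants one arrives at $|x|\le R+C'\,t^{1/(2-2s)}$, with $C'=C'(\|u_0\|_\infty,N,s)$ (the $m$-dependence, for $m\in[2,3)$, is absorbed into the constant since the exponent $m-1$ enters only through the prefactor). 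This is the stated bound on $\mathcal{FB}(u)$.

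The main obstacle is really just the algebraic optimization and the verification that the scaling $C=C(1,1)L^{m-3/2+s}a^{1/2-s}$ from Theorem \ref{ThmBarrierParabola} is valid throughout the range we use; the restriction $s<1/2$ is essential, because at $s=1/2$ the optimum in $h$ produces a logarithm and for $s>1/2$ the explicit scaling of the speed in $L$ and $a$ is lost (cf.\ the remark at the end of the statement of Theorem \ref{ThmBarrierParabola}), so the one-shot radial-barrier argument degenerates, consistent with the hypothesis $s<1/2$ in the corollary.
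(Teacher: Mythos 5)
Your proof is correct and is essentially the detailed version of what the paper merely remarks: the paper says the corollary follows from Theorem~\ref{ThmBarrierParabola} together with the argument in \cite{CaffVaz}, and you supply exactly that argument by enclosing $u_0$ under the radial parabola $a(|x|-b)^2$ and minimizing $h+C_0L^{m-1}h^{2s-1}t$ over $h=b-R$, which gives both $h$ and $Ct$ of order $t^{1/(2-2s)}$. One small slip in your closing aside: at $s=1/2$ no logarithm appears --- the quantity $h+C_0L^{m-1}h^{2s-1}t$ simply reduces to $h+C_0L^{m-1}t$, which has no interior minimizer in $h$; the genuine obstruction at $s\ge 1/2$, as you also note, is that Theorem~\ref{ThmBarrierParabola} loses the explicit scaling law $C(L,a)=C(1,1)L^{m-3/2+s}a^{1/2-s}$ there.
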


\subsection{Persistence of positivity}

This property is also interesting in the sense that avoids the possibility of degeneracy points for the solutions. In particular,  assuming that the solutions are continuous, it implies the non-shrinking of the support.  Due to the nonlocal character of the operator, the following theorem can be proved only for a certain class of solutions.

\begin{lemma}\label{lemmaRadialSym}
Let $u$ be a weak solution as constructed in Theorem \ref{Thm2PMFP} and assume that the initial data $u_0(x)$ is radially symmetric and non-increasing in $|x|$. Then $u(x,t)$ is also radially symmetric and non-increasing in $|x|$.
\end{lemma}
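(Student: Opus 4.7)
The plan is to establish both properties at the level of the regularized problems used in Sections~\ref{SectionExistI}--\ref{limitssec} and then transfer them to the limit. Throughout, I would choose the mollifier defining $\widehat u_0$ to be radially symmetric so that $\widehat u_0$ inherits radial symmetry from $u_0$.

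For radial symmetry, my approach would be to argue by uniqueness for the approximate problem $(P_{\epsilon\delta\mu R})$. The operators $\delta\Delta$, $\nabla\cdot$ and convolution against the radial kernel $K_s^\epsilon$ all commute with every rotation $\rho\in O(N)$, and $B_R$ is $O(N)$-invariant. Since $(P_{\epsilon\delta\mu R})$ is a uniformly parabolic problem with smooth coefficients, it admits a unique classical solution, so the rotated function $U_1(\rho x,t)$ must coincide with $U_1(x,t)$. Radial symmetry then passes to the successive $L^2_{\mathrm{loc}}$ limits $\epsilon\to 0,\,R\to\infty,\,\mu\to 0,\,\delta\to 0$ carried out in Sections \ref{SectionExistI}--\ref{limitssec}, yielding that $u(\cdot,t)$ is radial.

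For the monotonicity, I would use the reflection/moving-plane characterization: a radial function $f$ on $\RN$ is non-increasing in $|x|$ iff for every $c>0$ one has $f(x)\leq f(\sigma_c(x))$ on $H_c:=\{x_1>c\}$, where $\sigma_c(x)=(2c-x_1,x_2,\dots,x_N)$ is the reflection across $\pi_c=\{x_1=c\}$. Working with the approximation $U_3$ of $(P_{\mu\delta})$, which is posed in all of $\RN$ so that $\sigma_c$ is a symmetry of the domain, define $V(x,t):=U_3(\sigma_c(x),t)$. Since every operator in $(P_{\mu\delta})$ commutes with the Euclidean isometry $\sigma_c$, the function $V$ solves the same equation with initial datum $\widehat u_0\circ\sigma_c\geq\widehat u_0$ on $H_c$. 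The target is to prove $W:=U_3-V\leq 0$ on $\overline{H_c}\times[0,T]$, from which passing $\mu\to 0$ and $\delta\to 0$ yields the monotonicity inequality for $u$; rotating $\pi_c$ and using the already-established radial symmetry then gives the full monotonicity in $|x|$.

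The main obstacle I expect is the comparison argument for $W$ on the half-space in the presence of the nonlocal forcing $\nabla\cdot(d_\mu(U_3)\nabla\mathcal{K}_s[W])$. The favorable structure is that $W$ is antisymmetric across $\pi_c$, in particular $W\equiv 0$ on $\pi_c$ and $W(\cdot,0)\leq 0$ on $H_c$. Moreover, for $x,y\in H_c$ one checks $|x-y|<|x-\sigma_c(y)|$, so the Riesz kernel $K_s$ weighs values of $W$ in $H_c$ more heavily than the (sign-reversed) values in its complement, which is the mechanism that drives a parabolic maximum principle for the antisymmetric perturbation $W$. I would carry this out by an energy estimate for $W^+\mathbf{1}_{H_c}$ using the Stroock--Varopoulos-type identities of Section \ref{subsecFunctSett}, exploiting the strict parabolicity provided by $\delta>0$ and the smoothness of $U_3,V$ to justify all manipulations pointwise; a Grönwall step then yields $W\leq 0$ on $H_c\times[0,T]$, and the previously justified passages to the limit close the proof.
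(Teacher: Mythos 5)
Your radial symmetry argument coincides with the paper's: the published proof of this lemma is exactly the one-sentence observation that the approximate problem $(P_{\epsilon\delta\mu R})$ is rotation-invariant and has a unique (classical) solution, so $U_1$ is radial, and this passes to the limit. So that half of your proposal is essentially the paper's proof, modulo your (correct) remark that $\widehat u_0$ should be mollified with a radial kernel.

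The difference is in the monotonicity part, and there your proposal has a genuine gap. First, note that the paper's proof does not actually establish the non-increasing claim at all — the uniqueness/rotation argument gives only radial symmetry, and the monotonicity is asserted without justification. Your reflection/moving-plane scheme is a sensible attempt to fill this, and the setup (reflections $\sigma_c$ are Euclidean isometries, $K_s$ is radial so the equation commutes with $\sigma_c$, $W=U_3-U_3\circ\sigma_c$ is antisymmetric and $\leq 0$ on $H_c$ at $t=0$) is correct. But the key comparison step for $W$ on the half-space is not actually closed. Concretely, when you expand the nonlocal drift term you get, besides the favorable piece $-d_\mu(U_3)(-\Delta)^{1-s}W$, a first-order transport term $d_\mu'(U_3)\nabla U_3\cdot\nabla\KK_s[W]$ and a coupling term $\bigl(d_\mu(U_3)-d_\mu(V)\bigr)(-\Delta)^{1-s}V$, neither of which has a definite sign at a putative positive interior maximum of $W$ in $H_c$ (there $\nabla W=0$ but $\nabla U_3\neq 0$ and $\nabla\KK_s[W]\neq 0$, and $(-\Delta)^{1-s}V$ is uncontrolled without the very monotonicity you are trying to prove). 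Your fallback — an energy estimate in $W^+\mathbf{1}_{H_c}$ plus Gr\"onwall — faces the obstacle that Stroock--Varopoulos is a whole-space identity, while the relevant positivity of the kernel $K_s$ here lives only on the reflected pair $H_c\times H_c$; you would need a half-space/antisymmetric version of such an inequality, which you have not supplied. Finally a minor point: for $W(\cdot,0)\le 0$ you need $\widehat u_0$ to be not merely radial but radially non-increasing, so the mollifier should be chosen radially decreasing (Riesz rearrangement then preserves the monotonicity), and the truncation to $B_R$ must respect this too.
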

\begin{proof}
The operators in the approximate problem \eqref{model1Aprox} are invariant under rotation in the space variable. Since the solution of problem \eqref{model1Aprox} is unique, then we obtain that $u(x,t)$ is radially symmetric.
\end{proof}

\begin{theorem}\label{ThmPersisPosit}
Let $u$ be a weak solution as constructed in Theorem \ref{Thm2PMFP} and assume that it is a radial function of the space variable $u(|x|,t)$ and is non-increasing in $|x|$. If $u_0(x)$ is positive in a neighborhood of a point $x_0$, then $u(x_0,t)$ is positive for all times $t>0$.
\end{theorem}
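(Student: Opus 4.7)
The strategy is to show that, for a radially non-increasing solution, the mass contained in every ball $B_r$ is non-increasing in $t$, and combine this with conservation of total mass to keep a positive amount of mass outside $B_{|x_0|}$. By the assumed radial monotonicity this contradicts any vanishing of $u$ near $x_0$. From the hypotheses (radial symmetry, monotonicity non-increasing in $|x|$, and positivity of $u_0$ on a neighborhood of $x_0$), $u_0$ is strictly positive on a spherical shell $\{|x_0|-\eta<|x|<|x_0|+\eta\}$ for some $\eta>0$, so the exterior mass $\mu_0:=\int_{\{|x|>|x_0|\}} u_0\,dx$ is strictly positive.

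Since $u(\cdot,t)$ is assumed radial and non-increasing in $|x|$, and the Riesz kernel $K_s(x)=c_{N,s}|x|^{-(N-2s)}$ (as well as its regularization $K_s^\epsilon=\rho_\epsilon\star K_s$) is radial and non-increasing, the pressure $p(\cdot,t)=\KK_s[u](\cdot,t)$ is radial and non-increasing too, by a classical rearrangement property of convolutions of radial non-increasing functions. Hence $\partial_r p\le 0$. Fix $r>0$ and test the weak formulation \eqref{model1weak} against $\phi(x)=\zeta(|x|)$, where $\zeta$ is smooth, non-increasing, equal to $1$ on $[0,r]$ and vanishing on $[r+h,\infty)$. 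Since $\nabla\phi=\zeta'(|x|)\,\hat r$ with $\zeta'\le 0$,
\begin{equation*}
\int_{\RN} u^{m-1}\nabla\KK_s[u]\cdot\nabla\phi\,dx=\int_{\RN} u^{m-1}(\partial_r p)\,\zeta'(|x|)\,dx\ge 0,
\end{equation*}
so the weak equation yields $\frac{d}{dt}\int_{\RN} u(\cdot,t)\phi\,dx\le 0$. Letting $h\to 0$ gives $M(r,t):=\int_{B_r} u(x,t)\,dx\le M(r,0)$ for every $t\ge 0$. Combined with the conservation of total mass (Theorem \ref{Thm2PMFP}, item~1),
\begin{equation*}
\int_{\{|x|>|x_0|\}} u(x,t)\,dx\ge \int_{\{|x|>|x_0|\}} u_0\,dx = \mu_0 > 0 \quad \text{for all } t>0.
\end{equation*}
If $u(\cdot,t_*)$ vanished in a neighborhood of $x_0$ for some $t_*>0$, the radial monotonicity in $|x|$ would force $u(\cdot,t_*)\equiv 0$ on $\{|x|\ge|x_0|\}$, contradicting the previous display. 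Hence $u$ stays positive on a neighborhood of $x_0$ for every $t>0$.

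The delicate point is the rigorous justification of the flux-sign computation, since the constructed weak solution is not known to be smooth. One carries out the argument at the level of the smooth approximations $U_1$ to the problem $(P_{\epsilon\delta\mu R})$: the extra viscous term contributes $-\delta\int\nabla U_1\cdot\nabla\phi\,dx = -\delta\int\partial_r U_1\,\zeta'(|x|)\,dx \le 0$, again by radial monotonicity of $U_1$ (which propagates from $\widehat{u}_0$ by rotational invariance and uniqueness of the regularized equation), and similarly for the nonlocal flux at the level of $\KK_s^\epsilon$. Thus $M_1(r,t)\le M_1(r,0)$ holds for every approximate solution, and passing to the limit in $\epsilon$, $R$, $\mu$ and $\delta$ as in Section \ref{limitssec} preserves this inequality for the constructed weak solution $u$.
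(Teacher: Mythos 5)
Your proof is correct, but it takes a genuinely different route from the paper's. The paper constructs a shrinking lower barrier $U(x,t)=e^{-at}F(|x|)$ and derives a contradiction at a hypothetical first contact point, citing the Caffarelli--V\'azquez estimates for $\overline{p_r}$ and $\overline{\Delta p}$ and using $m\ge 2$ to close the resulting inequality; this local contact argument also supplies the quantitative lower bound $u(x_0,t)\ge e^{-at}F(0)$. You instead argue globally via a mass-monotonicity principle: since the Riesz kernel and the radially non-increasing $u(\cdot,t)$ are both radially symmetric and radially non-increasing, so is the pressure $p=\KK_s[u]$, hence $\partial_r p\le 0$ and the outward flux $u^{m-1}\partial_r p$ across any sphere $\partial B_r$ is non-positive; this makes $\int_{B_r}u(\cdot,t)\,dx$ non-increasing in $t$, and together with conservation of total mass it keeps $\int_{|x|>|x_0|}u(\cdot,t)\,dx\ge\mu_0>0$ for all $t$, which by radial monotonicity forbids $u(x_0,t)=0$. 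Your approach is more elementary (it uses only the sign of $\partial_r p$, not quantitative pressure bounds), it works for all $m>1$, and it never needs a separate argument to exclude contacts at the edge of the barrier's support; what it gives up is the explicit exponential-in-time lower bound that the paper's barrier furnishes. One caveat worth flagging: the convolution-monotonicity step requires $N-2s>0$ so that $K_s(x)=c_{N,s}|x|^{-(N-2s)}$ is itself radially non-increasing; in the excluded case $N=1$, $s\in(1/2,1)$ you must argue directly with the combined operator $\nabla\KK_s$ of Section \ref{SubsectN1} and invoke the sign $\partial_r p\le 0$ at that level --- the same fact from \cite{CaffVaz} that the paper's own proof already relies on.
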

\begin{proof}
A similar technique as the one presented in the tail analysis is used for this proof, but with what we call true subsolutions. Assume $u_0(x)\geq c>0$ in a ball $B_R(x_0)$. By translation and scaling we can also assume $c=R=1$ and $x_0=0$. Again, we will study a possible first contact point with a barrier that shrinks quickly in time, like
\begin{equation}
U(x,t)=e^{-at}F(|x|),
\end{equation}
with $F:\mathbb{R}_{\geq 0}\lra\mathbb{R}_{\geq 0} $ to be chosen later and $a>0$ large enough. Choose $F(0)=1/2$, $F(r )=0$ for $r\geq 1/2$ and $F'(r ) \leq 0$ for all $r\in \mathbb{R}_{\geq 0}$. The contact point $(x_c,t_c)$ is sought in $B_{1/2}(0)\times (0,\infty)$. By approximation we can assume that  $u$ is positive everywhere so there are no contact points at the parabolic border. At the possible contact point $(x_c,t_c)$ we have
$$u(x_c,t_c)=U(x_c,t_c), \quad u_t(x_c,t_c) \leq U_t(x_c,t_c)= -a U(x_c,t_c),$$
$$\nabla u(x_c,t_c)=\nabla U(x_c,t_c)=e^{-at_c}F'(|x_c|)\mathbf{e_r}, \quad \mathbf{e_r}=x_c/|x_c|.$$
We recall the equation
\[u_t=(m-1)u^{m-2} \nabla u \nabla p + u^{m-1}\Delta p.\]
Then at the contact point $(x_c,t_c)$ we have
\[-aU=U_t\geq u_t= (m-1)U^{m-2} \nabla U \overline{\nabla p} + U^{m-1}\overline{\Delta p}, \]
where $\overline{\Delta p}=\Delta p(x_c,t_c)$.  Then
\[
-a e^{-at_c}F(|x_c|) \geq (m-1) e^{-a(m-2)t_c} F(|x|)^{m-2} e^{-at_c}F'(|x_c|)\overline{p_r}+ e^{-a(m-1)t_c} F(|x_c|)^{m-1}\overline{\Delta p}.
 \]
According to \cite{CaffVaz} we know that the term $F'(|x|)\,\overline{p_r}\geq0$ and $\overline{\Delta p}$ is bounded uniformly. Therefore
\[
-a e^{-at_c}F(|x_c|) \geq e^{-a(m-1)t_c} F(|x_c|)^{m-1}\overline{\Delta p}.
\]
Simplifying and using that $m\ge 2$, $\overline{\Delta p}$ is bounded uniformly and also $F$ is bounded, we obtain
\begin{eqnarray*}
a  \leq -e^{-a(m-2)t_c} F(|x_c|)^{m-2}\overline{\Delta p} \le K e^{-a(m-2)t_c}  \le K.
\end{eqnarray*}
This is not true if $a>K$ and we arrive at a contradiction.
\end{proof}

\noindent \textbf{Remark.} There exist counterexamples on the persistence of positivity property when the  hypothesis of Theorem  \ref{ThmPersisPosit} are not satisfied. In \cite{CaffVaz} (Theorem 6.2) the authors construct an explicit counterexample by taking an initial data with not connected support.

\section{Infinite propagation speed in the case $1<m<2$ and $N=1$}\label{SectionInfinite}

In this section we will consider model \eqref{model1}
\begin{equation}\label{model1repeat}
\partial_t u= \partial_x \cdot (u^{m-1} \partial_x p), \quad p=(-\Delta)^{-s}u,
\end{equation}
for $x\in \mathbb{R}$, $t>0$ and $s\in (0,1)$. We take compactly supported initial data $u_0\ge 0$ such that $u_0 \in L^1_{\text{loc}}(\mathbb{R}).$ We want to prove infinite speed of propagation of the positivity set for this problem.
This is not easy, hence we introduce the integrated solution $v$, given by
\begin{equation}\label{def.v}
v(x,t)=\int_{-\infty}^x u(y,t)\, dy \ge 0 \quad \text{for }t>0, \ x \in \mathbb{R}.
\end{equation}
Therefore $v_x=u$ and $v(x,t)$ is a solution of the equation
\begin{equation}\label{IntegEq}
\partial_t v= -|v_x|^{m-1}\lapal v ,
\end{equation}
in some sense that we will make precise. The exponents $\alpha$ and $s$ are related by
$\alpha=1-s.$  The technique of the integrated solution has been extensively used in the standard Laplacian case to relate the porous medium equation with its integrated version, which is the $p$-Laplacian equation, always in 1D, with interesting results, see e.\,g. \cite{KaminVaz91}. The use of this tool in \cite{BilerKarchMonneau} for fractional Laplacians  in the case $m=2$ was novel and very fruitful.  We consider equation \eqref{IntegEq} with initial data
\begin{equation}\label{initialv0}
v(x,0)=v_0(x):=\int_{-\infty}^x u_0(x)\,dx  \quad \text{ for all }x \in \mathbb{R}.
\end{equation}
Note that $v(x,t)$ is a non-decreasing function in the space variable $x$. Moreover, since $u(x,t)$ enjoys the property of conservation of mass, then $v(x,t)$ satisfies (see Figure \ref{figinitialdata})
$$ \lim_{x\to -\infty}v(x,t)=0, \quad  \lim_{x\to +\infty}v(x,t)=M $$
for all $t\geq0$. We devote a separate study to the solution $v$ of the integrated problem \eqref{IntegEq} in Section \ref{SectionIntegrProbl}.
The validity of the maximum principle for equation \eqref{IntegEq} allows to prove a clean propagation theorem for $v$.

\begin{theorem}[\textbf{Infinite speed of propagation}]\label{ThmInfv}
Let $v$ be the solution of Problem \eqref{IntegEq}-\eqref{initialv0}, and assume that $u_0\ge 0$ is compactly supported. Then $0<v(x,t)<M$ for all $t>0$ and $x\in \R$.
\end{theorem}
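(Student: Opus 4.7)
The plan is as follows. Having developed, in the preceding part of this section, a viscosity framework for the integrated equation $v_t + |v_x|^{m-1}\lapal v = 0$ with $\alpha=1-s$, I would first exploit the associated comparison principle against the trivial classical (hence viscosity) solutions $w\equiv 0$ and $w\equiv M$. Since $0\le v_0\le M$ by the definition of the primitive \eqref{def.v}, comparison immediately yields the (non-strict) bounds $0\le v(x,t)\le M$ for all $(x,t)$. To pass from this to the strict inequalities, I would exploit the reflection--complement symmetry $w(x,t):=M-v(-x,t)$, which solves the same equation: the prefactor $|v_x|^{m-1}$ is even in $v_x$, $\lapal$ is reflection-invariant and annihilates constants, and $w_0(x)=M-v_0(-x)$ is again an admissible initial datum of the same ``step'' shape. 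Consequently the statements ``$v>0$ everywhere'' and ``$v<M$ everywhere'' are equivalent, and it suffices to prove one of them.

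For strict positivity I would argue by contradiction. If $v(x_0,t_0)=0$ for some $t_0>0$, then the monotonicity $v_x=u\ge 0$ forces $v(\cdot,t_0)\equiv 0$ on the whole half-line $(-\infty,x_0]$; combined with conservation of mass $\int u(\cdot,t_0)=M>0$, the singular integral representation gives
\[
\lapal v(x_0,t_0) \;=\; -C_{1,\alpha}\int_{\R}\frac{v(y,t_0)}{|y-x_0|^{1+2\alpha}}\,dy \;<\;0,
\]
so the nonlocal operator strictly tries to push the profile up at $x_0$. To convert this heuristic into a true contradiction despite the vanishing prefactor $|v_x|^{m-1}$, I would regularize by
\[
v^{\varepsilon}_t + (|v^{\varepsilon}_x|^2+\varepsilon^2)^{(m-1)/2}\lapal v^{\varepsilon}=0
\]
with the same initial data. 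The coefficient is now bounded below by $\varepsilon^{m-1}>0$, turning the equation into an honest nonlocal parabolic problem, for which strict positivity at all positive times follows from a standard Hopf-type lower barrier (for instance a small, appropriately rescaled fractional Poisson-type kernel dominated by $v^{\varepsilon}(\cdot,t_1)$ at some early $t_1>0$). Stability of viscosity solutions under $\varepsilon\to 0$, together with a uniform quantitative positivity bound on compact subsets, would then transfer strict positivity to the limit $v$.

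The main obstacle is precisely this passage to the limit. At a candidate zero $(x_0,t_0)$ of $v$ the natural smooth test function $\phi\equiv 0$ touches $v$ from below and yields only the tautology $0\ge 0$ in the viscosity supersolution inequality, so strict positivity cannot be extracted pointwise from the equation itself; it must come from a lower barrier that is uniform in $\varepsilon$ and compatible with the initial datum, which itself vanishes on a half-line. Designing such a barrier---sharp enough to spread instantaneously across all of $\R$ while remaining below $v_0$ initially---is the delicate step, and will rely on the explicit polynomial decay of the $\alpha$-stable heat kernel together with the monotone structure of $v$ that was used to reduce the two-sided strict bound to the one-sided one.
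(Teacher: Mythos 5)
There is a genuine gap, and you have in fact correctly diagnosed it yourself in the final paragraph: the whole weight of the theorem rests on producing a lower barrier, and the proposal does not supply one. The preliminary observations are fine---the comparison with the constants $0$ and $M$, and the reflection--complement symmetry $w(x,t)=M-v(-x,t)$ (which indeed solves the same equation and reduces the two-sided strict bound to one side) are both correct and clean. But the contradiction step does not close: as you note, at a putative zero $(x_0,t_0)$ the monotonicity in $x$ forces $v(\cdot,t_0)\equiv 0$ on $(-\infty,x_0]$, and $\phi\equiv 0$ is then an admissible test function touching from below, yet the viscosity supersolution inequality reads $0+|0|^{m-1}\cdot 0\ge 0$, a tautology; the strictly negative value of $\lapal v(x_0,t_0)$ never enters because it is multiplied by the degenerate factor $|v_x|^{m-1}=0$. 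The $\varepsilon$-regularization only postpones this: the regularized problems have $v^\varepsilon>0$, but without a lower bound on $v^\varepsilon$ that is \emph{uniform} in $\varepsilon$, positivity can (and a priori does) degenerate in the limit. Constructing such a uniform barrier is exactly the content of the theorem.

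The paper resolves this by working directly with the degenerate equation (no $\varepsilon$-regularization at this stage) and building an explicit viscosity subsolution with a self-similar power-law tail,
\[
\Phi_\epsilon(x,t)=(t+\tau)^{b\gamma}\bigl((|x|+\xi)^{-\gamma}+G(x)\bigr)-\epsilon,
\qquad
\gamma=\frac{m+2\alpha}{2-m},\quad b=\frac{1}{m-1+2\alpha},
\]
where $G$ is the compactly supported auxiliary function of Lemma~\ref{LemmaG}. The crucial structural point---which your sketch does not reach---is that the exponent $\gamma$ is tuned so that $\gamma=(\gamma+1)(m-1)+1+2\alpha$: with this choice the decay of $|\Phi_x|^{m-1}\sim |x|^{-(\gamma+1)(m-1)}$ exactly matches the decay $|x|^{-(1+2\alpha)}$ of $\lapal$ applied to the tail (Lemma~\ref{LemmaFelix}), so the nonlinear term does not die faster than the time derivative and a genuine subsolution inequality can be forced by choosing $C_2$ large in (G3). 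Note also that $\gamma>0$ requires $m<2$, which is exactly the regime under consideration; so the barrier itself encodes why the dichotomy pivots at $m=2$. Once $\Phi_\epsilon$ is in hand, the parabolic comparison principle (Proposition~\ref{ComparPrinc}) is applied on the half-line $\{x<x_0\}$ with the boundary data handled via the step shape of $v_0$ and continuity of $v$, and positivity of $v(x_1,t_1)$ is read off from positivity of the barrier for a compatible choice of $\xi,\epsilon$. None of this self-similar analysis, the matching of decay rates, or the construction of $G$ appears in your proposal; these are not technical footnotes but the main new content, so the proof as proposed is incomplete.
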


The use of the integrated function is what forces us to work in one space dimension.
The result  continues the theory of the porous medium equation with potential pressure, by proving that model \eqref{model1repeat} has different propagation properties depending on the exponent $m$ by the ranges $m\ge 2$ and $1<m<2$. Such a behaviour is well known to be typical for the classical Porous Medium Equation $u_t=\Delta u^m$, recovered formally for $s=0$, which has finite propagation for $m>1$ and infinite propagation for $m\le 1$.  Therefore, our result is  unexpected, since it shows that for the fractional diffusion model the separation between finite and infinite propagation is moved to $m=2$.

\medskip

\noindent {\sc Proof of  Theorem \ref{ThmInfFiniteProp}, part b).} This weaker result follows immediately. In fact, in Theorem \ref{ThmInfv} we prove that $v(x,t)$  defined by \eqref{def.v} is positive for every $t>0$ if $x \in \R$.  Therefore for every $t>0$ there exist points $x$ arbitrary far from the origin such that $u(x,t)>0$.

If moreover, $u_0$ is radially symmetric and non-increasing in $|x|$ and $u$ inherits the symmetry and monotonicity properties of the initial data as proved in Lemma \ref{lemmaRadialSym}. This ensures that $u$ can not take zero values for any $x\in\R$ and $t>0$.

\qed

\subsection{Study of the integrated problem}

\noindent $\bullet$ \textbf{Connection between Model \eqref{model1repeat} and Model \eqref{IntegEq}}

We explain how the properties of the Model \eqref{model1repeat} with $N=1$ can be obtained via a study of the properties of the integrated equation \eqref{IntegEq}.
We consider equation \eqref{model1repeat} with compactly supported initial data $u_0$ such that $u_0 \geq 0$. Let us say that $\text{supp }u_0 \subset[-R,R]$, where $R>0.$
Therefore, the corresponding initial data to be considered for the integrated problem is $v_0(x) = \int_{-\infty}^{x} u_0(y)dy$, for all $x \in \mathbb{R}.$ Then $v_0:\mathbb{R}\to [0,\infty)$ and has the properties
\begin{equation}\label{v0Assump}
 v_0(x)=0 \text{ for }x<-R, \quad v_0(x)=M \text{ for }x>R, \quad v_0'(x)\ge 0\text{ for }x\in (-R, R),
\end{equation}
where $\R>0$ is fixed from the beginning and $M=\int_{\mathbb{R}}u_0(x)dx$ is the total mass.

\begin{figure}[h!]
	\begin{center}
		\includegraphics[width=\textwidth]{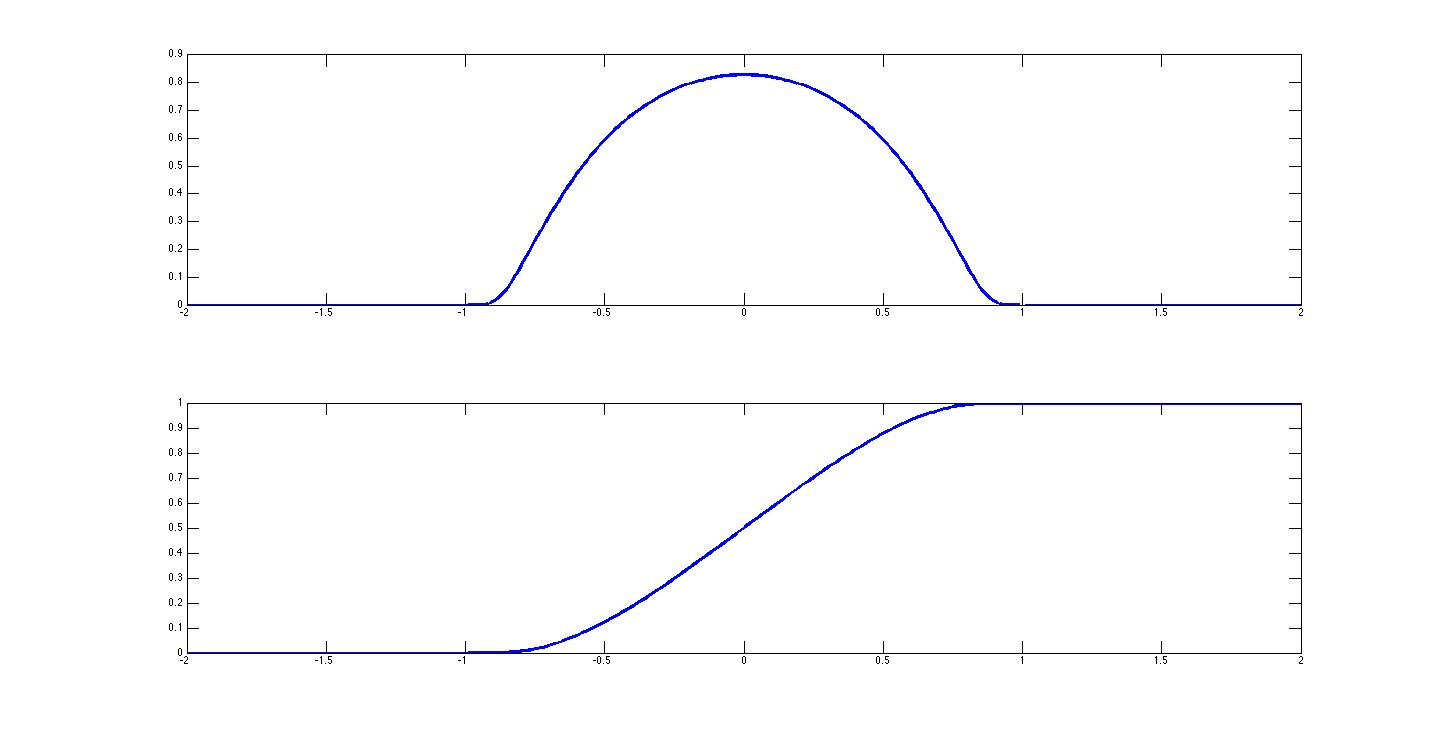}
		\caption{Typical compactly supported initial data for models (\ref{model1repeat}) and (\ref{IntegEq}).}
        		\label{figinitialdata}
	\end{center}
\end{figure}

\subsection{Regularity}

\begin{proposition}\label{propContxtforv}
The solution $v:[0,T]\times \R \to [0,\infty)$ of Problem \eqref{IntegEq} defined by formula\\
$\displaystyle{v(x,t)=\int_{-\infty}^x u(y,t)dy}$ is continuous in space and time.
\end{proposition}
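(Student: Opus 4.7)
My plan is to establish joint continuity of $v$ by combining a uniform-in-$t$ Lipschitz estimate in $x$ with pointwise-in-$x$ continuity in $t$; the triangle inequality then gives joint continuity. The spatial estimate is immediate from the $L^\infty$ bound in Theorem~\ref{Thm1PMFP}: since $v_x=u$ in the distributional sense and $\|u(\cdot,t)\|_\infty\le\|u_0\|_\infty$, we get
\[
|v(x_1,t)-v(x_2,t)|=\left|\int_{x_1}^{x_2}u(y,t)\,dy\right|\le\|u_0\|_\infty|x_1-x_2|
\]
for all $t\in[0,T]$, which is what we need.

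For continuity in $t$ at a fixed $x_0\in\R$, the strategy is to exploit the weak formulation~\eqref{model1weak}. For any $\psi\in C_c^\infty(\R)$, inserting $\phi(y,s)=\psi(y)\eta(s)$ with $\eta\in C^1([0,T])$, $\eta(T)=0$, yields that
\[
F_\psi(t):=\int_\R u(y,t)\psi(y)\,dy
\]
is absolutely continuous on $[0,T]$ with $F_\psi'(t)=-\int_\R u^{m-1}\partial_y\KK_s[u]\,\partial_y\psi\,dy$. The $L^1(0,T)$-integrability of the right-hand side follows from Cauchy--Schwarz on the compact support of $\partial_y\psi$, combined with $u\in L^\infty$ and the second energy estimate of Theorem~\ref{Thm1PMFP} (which gives $u^{m-1}|\partial_y\KK_s[u]|^2\in L^1(Q_T)$). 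I would then approximate $\chi_{(-\infty,x_0]}$ by test functions $\psi_n\in C_c^\infty(\R)$ built as products of a smooth monotone step $\rho(n(y-x_0))$ with a wide spatial cutoff $\chi_{R_n}$, so that
\[
|F_{\psi_n}(t)-v(x_0,t)|\le\frac{\|u_0\|_\infty}{n}+\int_{|y|\ge R_n}u(y,t)\,dy.
\]

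The principal obstacle is that this bound must tend to zero \emph{uniformly in} $t\in[0,T]$, which reduces to the tail control $\sup_{t\in[0,T]}\int_{|y|\ge R}u(y,t)\,dy\to 0$ as $R\to\infty$. I would obtain this by propagating a first-moment bound: since $u_0$ is compactly supported in the setting of Section~\ref{SectionInfinite}, $\int_\R|y|u_0\,dy<\infty$, and the weak equation formally gives
\[
\left|\frac{d}{dt}\int_\R|y|u(y,t)\,dy\right|\le\int_\R u^{m-1}|\partial_y\KK_s[u]|\,dy.
\]
Making the right-hand side belong to $L^1(0,T)$ is the delicate point (the naive Cauchy--Schwarz fails for $m\in(1,2)$ because $\int_\R u^{m-1}dy$ need not be finite); here one would combine the $L^p$ estimates of Section~\ref{SectionEstim} with the energy identities, or exploit the additional regularity of the constructed solution inherited from the approximate problems. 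Once the moment bound is in hand, Chebyshev's inequality gives the required uniform tail decay, $F_{\psi_n}\to v(x_0,\cdot)$ uniformly on $[0,T]$ so that $v(x_0,\cdot)\in C([0,T])$, and joint continuity of $v$ on $[0,T]\times\R$ follows from the spatial Lipschitz estimate of the first paragraph.
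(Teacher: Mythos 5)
Your spatial Lipschitz estimate is exactly the one the paper uses; the departure is in the argument for continuity in time, and there you have a genuine gap that you have honestly flagged but not filled.

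You approximate $\chi_{(-\infty,x_0]}$ by compactly supported test functions and reduce continuity in $t$ to the uniform tail bound $\sup_{t\in[0,T]}\int_{|y|\ge R}u(y,t)\,dy\to 0$. For $m\in(1,2)$ — the regime actually relevant in Section~\ref{SectionInfinite} — the paper has no exponential tail estimate (that is only proved for $m\ge 2$ in Section~\ref{SectExponTail}), and your proposed route via a first-moment bound is, as you yourself observe, not closed: you would need $\int_0^T\int_\R u^{m-1}|\partial_y\KK_s[u]|\,dy\,dt<\infty$ on the \emph{whole line}, and neither Cauchy--Schwarz nor the $L^p$ decay estimates of Section~\ref{SectionEstim} give this without further work, because $\int_\R u^{m-1}\,dy$ need not be finite when $m-1<1$. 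So the proof as written does not go through.

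The paper avoids the tail problem entirely by arguing locally. From the distributional identity $v_t=u^{m-1}\partial_x\KK_s[u]$ and the second energy estimate (item~4 of Theorem~\ref{Thm1PMFP}) combined with the $L^\infty$ bound, one gets $v_t\in L^2([0,T]:L^2(B))$ for every bounded interval $B\subset\R$; no global integrability in $x$ is needed. Then one writes, for a free parameter $h>0$,
\[
|v(x_0,t_1)-v(x_0,t_0)|\le 2Lh+\frac{1}{h}\int_{x_0}^{x_0+h}|v(y,t_1)-v(y,t_0)|\,dy,
\]
using the spatial Lipschitz bound to compare $v(x_0,\cdot)$ with its average over $[x_0,x_0+h]$, controls the averaged term by $\frac{1}{h}\int_{x_0}^{x_0+h}\int_{t_0}^{t_1}|v_t|\,dt\,dy\le h^{-1/2}|t_1-t_0|^{1/2}\|v_t\|_{L^2([0,T]\times B)}$ via Cauchy--Schwarz on the bounded set $B=[x_0,x_0+h]\times[t_0,t_1]$, and then optimizes in $h$ (taking $h\sim|t_1-t_0|^{1/3}$) to obtain a pointwise H\"older-$\frac{1}{3}$ modulus in time, uniform in $x_0$. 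This converts the local $L^2$ bound on $v_t$ into continuity in $t$ with no reference to the behaviour of $u$ at infinity, which is exactly what your approach lacks. If you want to salvage your route you would need to prove the uniform tail decay for $1<m<2$ as a separate (and non-trivial) lemma; otherwise, replacing the test-function step with the paper's averaging trick is the cleaner fix.
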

\begin{proof}

\textbf{I. Preliminary estimates.} Since $v_x(x,t)=u(x,t)$, where $u$ is the solution of Problem \eqref{model1}, then by the estimates of Section \ref{SubsectEps} we have the following:

\noindent $\bullet$ $v_x =u \in L^\infty([0,T]: L^\infty(\R))$, therefore $v \in L^\infty([0,T]: \text{Lip}(\R))$, where $\text{Lip}(\R)$ is the space of Lipschitz continuous functions on $\R$. In particular, $v \in L^\infty([0,T]: \text{Lip}(B_R))$ for every $B_R\subset \R$.

\noindent $\bullet$ We have $(v_t)_x = u_t = \partial_x(u^{m-1}\partial_x (-\Delta)^{-s}u) $ in the sense of distributions.  Then $v_t \in L^2([0,T]:L^2(B))$ for every set $B\subset \R$, with $|B|<+\infty$. The proof is as follows. The first equality holds in the distributions sense, that is
$$\int_0^T\int_{\R} v_t \varphi_x  \, dx \, dt:=-\int_0^T\int_{\R} v (\varphi_x)_t  \, dx \, dt = \int_0^T\int_\R u^{m-1}\partial_x (-\Delta)^{-s}u \, \varphi_x \, dx \, dt, \quad \forall \varphi \in C^{\infty}_0(\R\times [0,T]).$$
This implies that $v_t=u^{m-1}\partial_x (-\Delta)^{-s}u $ a.e. in $\R$. Then, using the second energy estimate \eqref{SecondEnergy}, we obtain
\begin{align*}
\|v_t\|^2_{L^2([0,T]:L^2(B))} &= \|u^{m-1}\partial_x (-\Delta)^{-s}u\|^2_{L^2([0,T]:L^2(B))} \\
&\le \|u\|_{L^\infty(\R)}^{m-1} \int_0^T\int_{B}u^{m-1}|\partial_x(-\Delta)^{-s}u |^2 dx dt < +\infty.
\end{align*}

\noindent \textbf{II. Continuity in time.} Let $(x_0,t_0) \in \R \times [0,T].$ Let $(x,t_1) \in \R \times [0,T]$ and $h:=x-x_0$. Let $B=[x_0,x_1]$.
Then
$$|v(x_0,t_1)-v(x_0,t_0)|\le |v(x_0,t_1)-v(x,t_1)| +|v(x,t_0)-v(x_0,t_0)|+|v(x,t_1)-v(x,t_0)|.$$
We know $v\in \text{Lip}_x(\R)$; let $L$ the corresponding Lipschitz constant. Then
\begin{align*}|v(x_0,t_1)-v(x_0,t_0)| &\le 2L h + \frac{1}{h}\int_{x_0}^x |v(y,t_1)-v(y,t_0)|dy \\
&\le 2L h + \frac{1}{h}\int_{x_0}^x \left|\int_{t_0}^{t_1}v_t dt\right|dy \le 2L h^2 + \int_{x_0}^x \int_{t_0}^{t_1}\left|v_t\right|dydt \\
&\le 2L h + \frac{1}{h} |B|^{1/2}|t_1-t_0|^{1/2}\|v_t\|^2_{L^2([0,T]:L^2(B)} \\
&=2L h + \frac{|t_1-t_0|^{1/2}}{h^{1/2}} \|v_t\|^2_{L^2([0,T]:L^2(B)}.
\end{align*}
Optimizing, we choose $h\sim\frac{|t_1-t_0|^{1/2}}{h^{1/2}} $, that is $h \sim (t_1-t_0)^{3/2}$, and we obtain that
$$
|v(x_0,t_1)-v(x_0,t_0)| \le K |t_1-t_0|^{1/3}.$$
This estimate holds uniformly in $x\in \R$ and it proves that $v(x,t)$ is Hölder continuous in time.  In particular $v\in C([0,T]: C(\R))$.
\end{proof}

\subsection{Viscosity solutions}\label{SectionIntegrProbl}

\noindent\textbf{Notion of solution.} We define the notions of viscosity sub-solution, super-solution and solution in the sense of Crandall-Lions \cite{Crandall}. The definition will be adapted to our problem by considering the time dependency and also the nonlocal character of the Fractional Laplacian operator. For a presentation of the theory of viscosity solutions to more general integro-differential equations we refer to Barles and Imbert \cite{Barles}.

\noindent It will be useful to make the notations:

\noindent$\text{USC}(Q)=\{ \text{upper semi-continuous functions }u:Q \to \mathbb{R} \},$

\noindent$\text{LSC}(Q)=\{ \text{lower semi-continuous functions }u:Q \to \mathbb{R} \},$

\noindent$\text{C}(Q)=\{ \text{continuous functions }u:Q \to \mathbb{R} \}.$

\begin{defn}Let $v \in \text{USC} (\R \times (0,\infty))$ (resp. $v \in \text{LSC} (\mathbb{R} \times (0,\infty))$ ). We say that $v$ is a \textbf{viscosity sub-solution} (resp. \textbf{super-solution}) of equation \eqref{IntegEq} on $\R\times (0,\infty)$ if for any point $(x_0,t_0)$ with $t_0>0$ and any $\tau \in (0,t_0)$ and any test function $\varphi \in C^2( \mathbb{R} \times (0,\infty) ) \cap L^\infty( \mathbb{R}\times (0,\infty) )$ such that $v-\varphi$ attains a global maximum (minimum) at the point $(x_0,t_0)$ on
$$
Q_\tau=\mathbb{R} \times (t_0-\tau,t_0]
$$
we have that
$$
\partial_t \varphi (x_0,t_0)+ |\varphi_x(x_0,t_0)|^{m-1}(\lapal \varphi(\cdot, t_0)) (x_0) \le 0 \quad (\ge 0).
$$
\end{defn}
Since equation \eqref{IntegEq} is invariant under translation, the test function $\varphi$ in the above definition can be taken such that $\varphi$ touches $v$ from above in the sub-solution case, resp. $\varphi$ touches $v$ from below in the super-solution case.

We say that $v$ is a \textbf{viscosity sub-solution} (resp. \textbf{super-solution})
 of the initial-value problem \eqref{IntegEq}-\eqref{initialv0} on $\R\times (0,\infty)$ if it satisfies moreover at $t=0$
$$
v(x,0)\le  \limsup_{y\to x,\ t \to 0} v(y,t) \quad ( \text{resp. } v(x,0)\ge  \liminf_{y\to x,\ t \to 0} v(y,t)).
$$

We say that $v\in C(\R \times (0,\infty))$ is a \textbf{viscosity solution} if $v$ is a viscosity sub-solution and a viscosity super-solution on $\R\times (0,\infty)$.

\begin{proposition}[Existence of viscosity solutions]
Let $u$ be a weak solution for Problem \eqref{model1}. Then $v$ defined by formula $\displaystyle{v(x,t)=\int_{-\infty}^x u(y,t)dy}$ is a viscosity solution for Problem \eqref{IntegEq}-\eqref{initialv0}.
\end{proposition}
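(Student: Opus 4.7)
The plan is to realize $v$ as a locally uniform limit of smooth classical solutions of regularized problems and then apply the stability theorem for viscosity solutions of nonlocal parabolic equations (cf. Barles--Imbert \cite{Barles}). First, I would take the smooth approximating family $U_n=U_1^{(\epsilon,\delta,\mu,R)}$ constructed in Section \ref{SectionExistI} as classical solutions of \eqref{model1Aprox}, and define the antiderivatives
$$V_n(x,t):=\int_{-\infty}^x U_n(y,t)\,dy,$$
which are smooth, non-decreasing in $x$, with $V_n(-\infty,t)=0$ and $V_n(+\infty,t)=\|U_n(t)\|_{L^1}$. Applying the fundamental theorem of calculus to the equation of $U_n$, and using the algebraic identity
$$\partial_x\KK_s^\epsilon[U_n]=\partial_x^2\KK_s^\epsilon[V_n]=-\LL^\epsilon_\alpha V_n,\qquad \alpha=1-s,$$
where $\LL^\epsilon_\alpha$ is the regularized $\lapal$ whose kernel is the $\rho_\epsilon$-convolution of the kernel of $\lapal$, I would derive that $V_n$ is a classical solution of
$$(V_n)_t=\delta(V_n)_{xx}-\bigl((V_n)_x+\mu\bigr)^{m-1}\LL^\epsilon_\alpha V_n,$$
and hence trivially a viscosity sub- and super-solution of this regularized equation.

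Next, I would establish $V_n\to v$ locally uniformly on $\R\times[0,T]$. The uniform space-Lipschitz bound $\|(V_n)_x\|_\infty\le\|u_0\|_\infty$ together with the time-equicontinuity estimate from Proposition \ref{propContxtforv} (whose proof uses only energy bounds stable in $n$) yields compactness via Arzel\`a--Ascoli on compact subsets of $\R\times[0,T]$. The identification of the limit is forced by the $L^2_{\mathrm{loc}}$ convergence $U_n\to u$ obtained along the diagonal $\epsilon\to0$, $R\to\infty$, $\mu\to0$, $\delta\to0$ in Section \ref{limitssec}, combined with mass conservation and the compact support or exponential decay of $u_0$ to control the tail at $x=-\infty$ uniformly in $n$.

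Then I would invoke the standard viscosity perturbation scheme. Given a smooth admissible test function $\varphi$ with $v-\varphi$ attaining a strict global maximum at $(x_0,t_0)$ on $Q_\tau$, one produces $(x_n,t_n)\to(x_0,t_0)$ at which $V_n-\varphi$ has a global maximum. Evaluating the regularized PDE at these points gives
$$\varphi_t(x_n,t_n)+\bigl(\varphi_x(x_n,t_n)+\mu\bigr)^{m-1}\LL^\epsilon_\alpha\varphi(\cdot,t_n)(x_n)\le\delta\,\varphi_{xx}(x_n,t_n),$$
and sending $(\epsilon,\delta,\mu,R)$ to the limit along a diagonal sequence yields $\varphi_t+|\varphi_x|^{m-1}\lapal\varphi\le0$ at $(x_0,t_0)$. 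The super-solution inequality follows symmetrically, and the initial-trace condition is an immediate consequence of $V_n(\cdot,0)\to v_0$ uniformly on compacts.

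The main obstacle is the passage to the limit in the nonlocal term, $\LL^\epsilon_\alpha\varphi(\cdot,t_n)(x_n)\to\lapal\varphi(\cdot,t_0)(x_0)$: the order $\alpha=1-s$ can approach one, so the operator has no decaying kernel at infinity and, in the regime $s>1/2$ on $\R$, the bare $\Ks$ is not defined pointwise. I would split the operator into a singular piece controlled by the uniform $C^2$-bound of $\varphi$ near $(x_0,t_0)$ and a tail piece controlled by $\|\varphi\|_\infty$ together with the uniform integrability of $\rho_\epsilon\ast k_\alpha$ away from the origin; in the remaining case $N=1$, $s\in(1/2,1)$ the combined operator $(\partial_x\Ks)$ from Section \ref{SubsectN1} has total order $2s-1<1$, making the limit considerably more direct.
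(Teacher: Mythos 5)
Your proof takes the same route as the paper's: pass to the integrated functions $V_n$ of the smooth approximating solutions, observe that they are classical (hence viscosity) solutions of the regularized integrated equations, establish local uniform convergence $V_n \to v$, and then invoke stability of viscosity solutions under locally uniform limits. You actually fill in considerably more detail than the paper's two-line sketch, and your regularized equation carries the correct sign $-\bigl((V_n)_x+\mu\bigr)^{m-1}\LL^\epsilon_\alpha V_n$, whereas the paper's displayed equation misprints a $+$. One small correction to your closing paragraph: the kernel of $\lapal$ decays at infinity like $|z|^{-(1+2\alpha)}$ for every fixed $\alpha=1-s\in(0,1)$, and $\lapal\varphi$ is classically well-defined for any admissible test function $\varphi\in C^2\cap L^\infty$; the pathology of $\Ks$ when $N=1$, $s>\tfrac12$ pertains to the original equation acting on $u$ and disappears once one passes to the integrated equation, whose nonlocal operator is always the forward fractional Laplacian $\lapal$ of order $2\alpha=2-2s<2$, so the limit in the nonlocal term is routine.
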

 \begin{proof}By Proposition \ref{propContxtforv} we know that $\displaystyle{v \in C([0,T]: C(\R))}$.
 The idea is to obtain a viscosity solution by the approximation process. 
 Let $v_\epsilon$ defined by  $\displaystyle{v_\epsilon(x,t)=\int_{-\infty}^x u_\epsilon(y,t)dy}$, where $ u_\epsilon$ is the approximation of $u$ as in Section \ref{SectionExistI}. Then $v_\epsilon$ is a classical solution, in particular a viscosity solution, to the problem
 $$(v_\epsilon)_t= \delta \Delta(v_\epsilon) + |(v_\epsilon)_x|^{m-1}(-\Delta)^{1-s}v_\epsilon.$$
 Since $u_\epsilon \to u$, then we get that $v_\epsilon \to v$ as $\epsilon \to 0$ (and similarly with respect to the other parameters). The final argument is to prove that a limit of viscosity solutions is a viscosity solution of Problem \eqref{IntegEq}-\eqref{initialv0}.

\end{proof}

The standard comparison principle for viscosity solutions holds true. We refer to Imbert, Monneau and Rouy \cite{ImbertHomog} where they treat the case $m=2$ and $\alpha=1/2$. Also, we mention Jakobsen and Karlsen \cite{Jakobsen} for the elliptic case.

\begin{proposition}[Comparison Principle]Let $m\in (1,2)$, $\alpha\in (0,1)$, $N=1$. Let $w$ be a sub-solution and $W$ be a super-solution in the viscosity sense of equation \eqref{IntegEq}.
If $w(x,0)\le v_0 \le W(x,0)$, then $w\le W$ in $\R \times (0,\infty)$.
\end{proposition}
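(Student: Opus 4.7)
The strategy will be the Crandall-Ishii doubling-of-variables method, adapted to the nonlocal setting following Barles-Imbert \cite{Barles} and Imbert-Monneau-Rouy \cite{ImbertHomog}. Suppose for contradiction that $M_0 := \sup_{\R \times [0,T)}(w - W) > 0$. Because $0 \le v \le M$, the functions $w$ and $W$ are globally bounded, so no coercivity penalty at spatial infinity is needed. Introduce
\begin{equation*}
\Phi_{\varepsilon,\beta}(x,y,t,s) := w(x,t) - W(y,s) - \frac{|x-y|^2 + (t-s)^2}{2\varepsilon} - \frac{\beta}{T-t},
\end{equation*}
and pick a maximizer $(x_\varepsilon, y_\varepsilon, t_\varepsilon, s_\varepsilon)$, whose existence follows from the semi-continuity of $w, W$, boundedness, and the blow-up of $\beta/(T-t)$ as $t \uparrow T$. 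For $\beta$ small the hypothesis $w(\cdot,0) \le W(\cdot,0)$ excludes $t_\varepsilon = 0$, and standard estimates give $|x_\varepsilon - y_\varepsilon|,\ |t_\varepsilon - s_\varepsilon| \to 0$ as $\varepsilon \to 0$.

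The next step is to insert the test functions $\varphi_w(x,t) := W(y_\varepsilon,s_\varepsilon) + (|x-y_\varepsilon|^2 + (t-s_\varepsilon)^2)/(2\varepsilon) + \beta/(T-t)$ and $\varphi_W(y,s) := w(x_\varepsilon,t_\varepsilon) - (|x_\varepsilon-y|^2 + (t_\varepsilon-s)^2)/(2\varepsilon)$ into the sub- and super-solution definitions. At the contact points the two test functions have gradients of common absolute value $|p_\varepsilon| := |x_\varepsilon - y_\varepsilon|/\varepsilon$, so the degenerate coefficient $|\varphi_x|^{m-1}$ matches across the two inequalities. Since neither $\varphi_w$ nor $\varphi_W$ is compatible a priori with $\lapal$ on the whole line, I invoke the Barles-Imbert splitting $\lapal = \mathcal{I}^{1}_r + \mathcal{I}^{2}_r$ at cutoff $r > 0$: the near-field $\{|z|<r\}$ is evaluated on the smooth test function, the far-field $\{|z|\ge r\}$ on $w$ and $W$ themselves. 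Subtracting the sub-solution and super-solution inequalities, the $(t_\varepsilon-s_\varepsilon)/\varepsilon$ contributions cancel and I arrive at
\begin{equation*}
\frac{\beta}{(T-t_\varepsilon)^2} + |p_\varepsilon|^{m-1}\bigl(A^{\text{near}}_r + A^{\text{far}}_r\bigr) \le 0,
\end{equation*}
where a direct computation with the quadratic penalty yields $A^{\text{near}}_r = -c\, r^{2-2\alpha}/((2-2\alpha)\varepsilon)$, and
\begin{equation*}
A^{\text{far}}_r = c\int_{|z|\ge r}\frac{[w(x_\varepsilon,t_\varepsilon) - W(y_\varepsilon,s_\varepsilon)] - [w(x_\varepsilon+z,t_\varepsilon) - W(y_\varepsilon+z,s_\varepsilon)]}{|z|^{1+2\alpha}}\,dz \ge 0
\end{equation*}
by the maximality of $\Phi_{\varepsilon,\beta}$ in the $(x,y)$ variables. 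Fixing $\varepsilon$ and $\beta$ and sending $r \to 0^+$ alone, $|p_\varepsilon|^{m-1} A^{\text{near}}_r \to 0$ while $\beta/(T-t_\varepsilon)^2$ remains strictly positive, producing the required contradiction.

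The main obstacle is the degenerate, only H\"older-continuous coefficient $|v_x|^{m-1}$ with $m-1 \in (0,1)$: one can neither divide by it nor treat it as a Lipschitz factor the way \cite{ImbertHomog} does for $m=2$. The argument above sidesteps this, because the surviving positive term $\beta/(T-t_\varepsilon)^2$ comes from the time-penalty rather than from the nonlocal operator, and so persists even if $|p_\varepsilon|$ collapses to zero. A secondary subtlety will be the order of limits: $r \to 0$ must be sent before (or independently of) $\varepsilon \to 0$, so that the spurious near-field contribution $|p_\varepsilon|^{m-1} r^{2-2\alpha}/\varepsilon$ vanishes without requiring any quantitative control on $|p_\varepsilon|$. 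Verifying that the Barles-Imbert hybrid integral is an admissible substitute in the viscosity definition for semi-continuous functions is standard; see \cite{Barles}.
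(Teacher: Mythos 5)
First, note that the paper itself does not supply a proof of this proposition: it is stated without proof and the reader is referred to Imbert--Monneau--Rouy \cite{ImbertHomog} (who treat $m=2$, $\alpha=1/2$) and to Jakobsen--Karlsen \cite{Jakobsen} for the stationary analogue. So there is no ``paper's own proof'' to compare against; I am assessing your argument on its own terms.

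Your overall strategy is the right one: double all variables, add a Crandall--Ishii time penalty $\beta/(T-t)$, invoke the Barles--Imbert near/far splitting to make the nonlocal term usable with an $L^\infty$-test-function definition, and observe that the far-field contribution $A^{\mathrm{far}}_r$ is nonnegative because translating $x$ and $y$ together leaves the quadratic penalty invariant while decreasing $\Phi_{\varepsilon,\beta}$. The decisive observation --- that the positive quantity surviving after subtraction is $\beta/(T-t_\varepsilon)^2$, coming from the time penalty and \emph{not} multiplied by the degenerate factor $|p_\varepsilon|^{m-1}$, while the matching near-field contribution $|p_\varepsilon|^{m-1}A^{\mathrm{near}}_r\sim |p_\varepsilon|^{m-1}r^{2-2\alpha}/\varepsilon\to 0$ as $r\to 0^+$ with $\varepsilon$ fixed --- is exactly what lets the proof bypass the fact that $p\mapsto|p|^{m-1}$ is only H\"older when $m\in(1,2)$. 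That part is correct and is, I think, the genuine heart of the matter.

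There is, however, a real gap at the very start, and you have explicitly introduced it by asserting that ``no coercivity penalty at spatial infinity is needed'' because $w,W$ are bounded. That is false: a bounded upper-semicontinuous function on the unbounded set $\R$ need not attain its supremum, so the maximizer $(x_\varepsilon,y_\varepsilon,t_\varepsilon,s_\varepsilon)$ of $\Phi_{\varepsilon,\beta}$ need not exist; and the subsequent step excluding $t_\varepsilon=0$ relies on extracting a convergent subsequence $x_\varepsilon\to\bar x$ in order to invoke semicontinuity against the initial ordering, which again fails without spatial compactness. You need either to add a small localization term $\sigma\psi(x)+\sigma\psi(y)$ with $\psi$ growing sublinearly (so that $(-\Delta)^\alpha\psi$ is bounded and vanishes as $\sigma\to 0$), carry $\sigma$ through the computation, and send $\sigma\to 0$ last; or to build into the hypotheses the structural behaviour of the integrated solutions, namely $w,W\in[0,M]$ with $w,W\to 0$ as $x\to -\infty$ and $\to M$ as $x\to +\infty$ uniformly in $t$, which forces $w-W\to 0$ at $|x|\to\infty$ and makes the sup attained on a compact set. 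Either route is fine, but one of them is required --- as written the argument does not start. A secondary point you should state explicitly rather than sweep under ``standard'' is that the paper's definition restricts test functions to $C^2\cap L^\infty$, so inserting the unbounded quadratic penalty at all already relies on the equivalence with the hybrid (near/far) formulation of Barles--Imbert; that equivalence is indeed available here, but it is a lemma, not a tautology.
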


We give now our extended version of parabolic comparison principle, which represents an important instrument when using barrier methods. This type of result is  motivated by the nonlocal character of the problem and the construction of lower barriers in a desired region $\Omega \subset \mathbb{R}$ possibly unbounded. This determines the parabolic boundary of a domain of the form $\Omega \times[0,T]$ to be $(\mathbb{R}\setminus \Omega) \times [0,T] \cup \mathbb{R} \times \{0\}$, where $\Omega \subset \mathbb{R}$. A similar parabolic comparison has been proved in \cite{CabreRoquejoffre} and has been used for instance in \cite{CabreRoquejoffre,StanVazquezKPP}.

\begin{proposition}\label{ComparPrinc}
Let $m>1$, $\alpha \in (0,1)$.  Let $v$ be a viscosity solution of Problem \eqref{IntegEq}-\eqref{initialv0}. Let  $\Phi: \mathbb{R}\times[0,\infty)\to \mathbb{R}$ such that $\Phi \in C^2(\Omega\times (0,T))$. Assume that
\begin{itemize}
  \item $\Phi_t +|\Phi_x|^{m-1}\lapal\Phi< 0 $ for $x\in \Omega$, $t\in [0,T]$;
  \item $\Phi(x,0) < v(x,0)$  for all $x\in \mathbb{R}$ (comparison at initial time);
  \item $\Phi(x,t) < v(x,t)$  for all $x \in \mathbb{R} \setminus \Omega$ and $t\in (0,T)$ (comparison on the parabolic boundary).
\end{itemize}
Then $\Phi(x,t) \le v(x,t)$ for all $x \in \mathbb{R}$, $t\in (0,T).$
\end{proposition}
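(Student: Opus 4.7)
We argue by contradiction. Suppose the conclusion fails, and set
\[
M := \sup_{\R\times[0,T]}\bigl(\Phi(x,t)-v(x,t)\bigr) > 0.
\]
By continuity of $\Phi$ and $v$, together with the two strict inequalities on the parabolic boundary $\R\times\{0\}\,\cup\,(\R\setminus\Omega)\times[0,T]$, any maximizing sequence for $\Phi-v$ stays bounded away from that boundary; hence, assuming for the moment that the supremum is attained, there exists $(x_0,t_0)\in\Omega\times(0,T]$ with $\Phi(x_0,t_0)-v(x_0,t_0)=M$.

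\textbf{Step 1: shift $\Phi$ to touch $v$ from below.} Define $\varphi(x,t):=\Phi(x,t)-M$. Then $\varphi\le v$ on $\R\times[0,T]$ with equality at $(x_0,t_0)$, so $v-\varphi$ attains its global minimum, equal to $0$, at that point. Since $\varphi$ is $C^2$ in a neighborhood of $(x_0,t_0)\in\Omega\times(0,T]$ (and, if needed to meet the global $C^2$ hypothesis in the definition, can be smoothly modified outside $\Omega$ without affecting the viscosity inequality at the contact point), the viscosity super-solution property of $v$ yields
\[
\varphi_t(x_0,t_0)+|\varphi_x(x_0,t_0)|^{m-1}\,\lapal\varphi(\cdot,t_0)(x_0)\;\ge\;0.
\]
Because $\varphi-\Phi=-M$ is constant, all partial derivatives of $\varphi$ and $\Phi$ coincide, and $\lapal$ annihilates constants, so this reduces to
\[
\Phi_t(x_0,t_0)+|\Phi_x(x_0,t_0)|^{m-1}\,\lapal\Phi(\cdot,t_0)(x_0)\;\ge\;0,
\]
which directly contradicts the strict-subsolution assumption at $(x_0,t_0)\in\Omega\times(0,T]$.

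\textbf{Main obstacle.} The only delicate point is guaranteeing attainment of $M$ in the (generally unbounded) cylinder $\Omega\times(0,T]$; in the barrier applications of this paper, $\Omega$ is of the form $\{|x|>R\}$, so compactness is not automatic. The standard remedy is a penalization: replace $\Phi-v$ by $\Phi-v-\eta t-\varepsilon\chi(x)$ for small $\eta,\varepsilon>0$, where $\chi\in C^2(\R)$ is nonnegative, grows to $+\infty$ at infinity, and has $\lapal\chi$ bounded on $\R$. For such a perturbation the supremum is attained at some $(x_0^{\eta,\varepsilon},t_0^{\eta,\varepsilon})\in\Omega\times(0,T]$; one applies the viscosity super-solution inequality with the admissible test function $\Phi-M_{\eta,\varepsilon}-\eta t-\varepsilon\chi$ and observes that the strict slack provided by the assumption $\Phi_t+|\Phi_x|^{m-1}\lapal\Phi<0$ on $\Omega$ absorbs the $O(\eta)+O(\varepsilon)$ correction terms when one sends $\varepsilon\to 0$ and then $\eta\to 0$. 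This is the classical non-local parabolic comparison scheme, whose details follow the pattern developed in \cite{Barles} and \cite{Jakobsen}.
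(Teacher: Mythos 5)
Your proposal is correct and follows essentially the same contradiction argument as the paper's: assume the conclusion fails, locate a point in $\Omega\times(0,T]$ where $v-\Phi$ (after a vertical shift) attains a global minimum, invoke the viscosity super-solution property of $v$ with $\Phi$ (or its shift) as the test function, and contradict the strict subsolution inequality on $\Omega$. The paper phrases this as a ``first contact point'' in time and quietly takes for granted the attainment of the extremum and the global admissibility of $\Phi$ as a test function; your remarks on penalization and on modifying $\Phi$ outside $\Omega$ make those same steps explicit, but the overall route is identical.
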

\begin{proof}
The proof relies on the study of the difference $\Phi-v:\mathbb{R}\times[0,\infty)\to \mathbb{R}$. At the initial time $t=0$ we have by hypothesis that $\Phi(x,0) - v(x,0) < 0$ for all $x\in \mathbb{R}$.

Now, we argue by contradiction. We assume that the function $\Phi-v$ has a first contact point $(x_c,t_c)$ where $x_c\in \Omega$ and $t_c \in (0,T)$. That is, $(\Phi-v)(x_c,t_c)=0$ and $(\Phi-v)(x,t)<0$ for all $0<t<t_c$, $x \in \mathbb{R}$, by regularity assumptions. Therefore, $(\Phi-v)$ has a global maximum point at $(x_c,t_c)$ on $\R \times (0,t_c].$ Therefore, $v-\Phi$ attains a global minimum at $(x_c,t_c).$

Since $v$ is a viscosity solution and $\Phi$ is an admissible test function then by definition
$$
\Phi_t (x_c,t_c) +|\Phi_x (x_c,t_c) |^{m-1}\lapal\Phi (x_c,t_c) \ge 0 ,
$$
which is a contradiction since this value is negative by hypothesis.
\end{proof}

\subsection{Self-Similar Solutions. Formal approach }\label{sectionSelfSim}

Self-similar solutions are the key tool in describing the asymptotic behaviour of the solution to certain parabolic problems. We perform here a formal computation of a type of self-similar solution to equation \eqref{IntegEq}, being motivated by the construction of suitable lower barriers.

\noindent Let $m\in (1,2)$ and $\alpha\in (0,1).$  We search for self-similar solutions to equation \eqref{IntegEq} of the form $$U(x,t)=\Phi(|y|t^{-b})$$ which solve equation \eqref{IntegEq} in $\R \times (0,\infty).$ After a formal computation, it follows that the exponent $b>0$ is given by $b=1/(m-1+2\alpha)$ and the profile function $\Phi$ is a solution of the equation
\begin{equation*}\label{ProfileEq2}
by\Phi'(y)- |\Phi'(y)|^{m-1}\lapal \Phi(y)=0.
\end{equation*}
We deduce that any possible behaviour of the form $\Phi(y)=c|y|^{-\gamma}$ with $\gamma>1$ is given by
\begin{equation}\label{gamma}
\gamma=\frac{2\alpha+m}{2-m}.
\end{equation}
The value of the self-similarity exponent will be used in the next section for the construction of a lower barrier.  A further analysis of self-similar solutions is beyond the purpose of this paper and can be the subject of a new work. We mention that in the case $m=2$, the profile function $\Phi$ has been computed explicitly by Biler, Karch and Monneau in \cite{BilerKarchMonneau}.

\subsection{Construction of the lower barrier}
 In this section we present a class of sub-solutions of equation \eqref{IntegEq} which represent an important tool in the proof of the infinite speed of propagation. For a suitable choice of parameters this type of sub-solution will give us a lower bound for $v$ in the corresponding domain. This motivates us to refer to this function as a lower barrier. We mention that a similar lower barrier has been constructed in \cite{StanVazquezKPP}.

Let $\displaystyle{\gamma=\frac{m+2\alpha}{2-m}}\text{ and }b=\frac{1}{m-1+2\alpha}$ be the exponents deduced in Section \ref{sectionSelfSim}.

We fix $x_0<0$. In the sequel we will use as an important tool a function $G:\mathbb{R}\to \mathbb{R}$ such that, given any two constants $C_1>0$ and $C_2>0$, we have that
\begin{itemize}
  \item (G1) $G$ is compactly supported in the interval $(-x_0,\infty)$;
  \item (G2) $G(x) \le C_1$ for all $x\in \mathbb{R}$;
  \item (G3) $(-\Delta)^s G(x) \le -C_2|x|^{-(1+2s)}$ for all $x<x_0$.
\end{itemize}
This technical result will be proven in Lemma \ref{LemmaG} of Section \ref{sectionApend} (Appendix).

\begin{lemma}[\textbf{Lower Barrier}]\label{LemmaBarrier} Let $x_0<0$, $\epsilon>0$ and $ \xi>0$. Also, let $G$ be a function with the properties (G1),(G2) and (G3). We consider the barrier
\begin{equation}\label{barrier1}
\Phi_\epsilon(x,t)=(t+\tau)^{b \gamma}\left((|x|+\xi)^{-\gamma}+G(x)\right)-\epsilon, \quad t\geq0, \ x\in \mathbb{R}.
\end{equation}
Then for a suitable choice of the parameter $C_2>0$, the function $\Phi_\epsilon$ satisfies
\begin{equation}\label{ineqPhi}
(\Phi_\epsilon)_t  + |(\Phi_\epsilon)_x|^{m-1} \lapal \Phi_\epsilon \le 0 \quad \text{ for }x<x_0, \ t> 0.
\end{equation}
Moreover, $C_1$ is a free parameter and $C_2=C_2(N,m,\alpha,\tau)$.
\end{lemma}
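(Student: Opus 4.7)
The strategy is to verify the subsolution inequality \eqref{ineqPhi} pointwise on $\{x<x_0\}\times(0,\infty)$ by direct computation, using the support of $G$ to kill local terms and the tailored value of $\gamma$ to match the scaling.

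Step 1 (reduction to the principal term). Since $x_0<0<-x_0$ and by (G1) $G$ is supported in $(-x_0,\infty)$, both $G$ and $G'$ vanish in an open neighborhood of every $x<x_0$. On that region $\Phi_\epsilon(x,t)=(t+\tau)^{b\gamma}(|x|+\xi)^{-\gamma}-\epsilon$, and writing $r=|x|=-x>0$, a direct computation gives
\[
(\Phi_\epsilon)_t=b\gamma(t+\tau)^{b\gamma-1}(r+\xi)^{-\gamma},\qquad
|(\Phi_\epsilon)_x|^{m-1}=\gamma^{m-1}(t+\tau)^{b\gamma(m-1)}(r+\xi)^{-(\gamma+1)(m-1)}.
\]

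Step 2 (nonlocal estimate). Since $\lapal$ annihilates constants, $\lapal\Phi_\epsilon=(t+\tau)^{b\gamma}[\lapal h+\lapal G]$ with $h(y)=(|y|+\xi)^{-\gamma}$. I estimate $\lapal h(x)$ by splitting the singular integral into the near zone $\{|y-x|<r/2\}$, where $h$ is smooth with $|h''|\le Cr^{-\gamma-2}$ and the contribution is $O(r^{-\gamma-2\alpha})$, and its complement, where $|x-y|^{-1-2\alpha}\le Cr^{-1-2\alpha}$ and $h\in L^1\cap L^\infty$ yields $O(r^{-(1+2\alpha)})$. Since $\gamma>1$, the far-field term dominates and $\lapal h(x)\le K r^{-(1+2\alpha)}$ with $K=K(m,\alpha,\gamma,\xi,x_0)$. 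Combining with (G3), interpreted with the exponent $\alpha$ in place of $s$, yields
\[
\lapal\Phi_\epsilon(x,t)\le -(C_2-K)(t+\tau)^{b\gamma}\,r^{-(1+2\alpha)},
\]
which is non-positive once $C_2>K$.

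Step 3 (matching the exponents). Plugging Steps 1 and 2 into \eqref{ineqPhi}, using $r+\xi\le(1+\xi/|x_0|)r$ for $r\ge|x_0|$ to absorb the shift into constants, and cancelling the common positive factor $(t+\tau)^{b\gamma-1}$, the inequality reduces to
\[
b\gamma\;\le\;c(m,\alpha,\xi,x_0)(C_2-K)(t+\tau)^{b\gamma(m-1)+1}\,r^{\gamma(m-2)+m+2\alpha}.
\]
The choice $\gamma=(m+2\alpha)/(2-m)$ is exactly the one that gives $\gamma(m-2)+m+2\alpha=0$, eliminating the $r$-dependence; this is the whole reason for this value of $\gamma$. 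A direct computation also yields $b\gamma(m-1)+1=2(m+\alpha-1)/[(m-1+2\alpha)(2-m)]>0$, so $(t+\tau)^{b\gamma(m-1)+1}\ge\tau^{b\gamma(m-1)+1}$. The inequality is therefore satisfied by taking $C_2=C_2(N,m,\alpha,\tau)$ large enough. The constant $C_1$ from (G2) never appears in the bound, which explains why it remains free.

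The main obstacle is Step 2: producing exactly the rate $r^{-(1+2\alpha)}$ for $\lapal h$ so that the bound from (G3) can absorb it. The near-field contribution of order $r^{-\gamma-2\alpha}$ is automatically subleading thanks to $\gamma>1$, and the self-similar exponent derived in Section \ref{sectionSelfSim} then delivers the delicate spatial cancellation in Step~3.
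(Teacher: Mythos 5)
Your overall strategy matches the paper's: on $\{x<x_0\}$ the function $G$ and its derivatives vanish (by (G1)), so only the power tail contributes to the local terms; $\lapal$ of that tail is estimated by a near--/far--zone splitting; the self-similar relation $\gamma(2-m)=m+2\alpha$ then kills the residual $r$--dependence and the choice of $C_2$ finishes. Your Step 2 inline re-derives what the paper isolates as Lemma~\ref{LemmaFelix}. (Minor typo: in Step 3 you write the $r$--exponent as $\gamma(m-2)+m+2\alpha$; the direct computation gives $\gamma(2-m)-m-2\alpha$. Both vanish at the chosen $\gamma$, so the conclusion is unaffected.)

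However, there is a real gap in Step 3 that contradicts the lemma as stated. You absorb the shift via $r+\xi\le(1+\xi/|x_0|)r$, which, because the exponent $-(\gamma+1)(m-1)$ on $(r+\xi)$ is negative, forces the factor $(1+\xi/|x_0|)^{(\gamma+1)(m-1)}$ into the side opposite $C_2$. Your own displayed inequality shows a constant $c(m,\alpha,\xi,x_0)$, and the required $C_2$ would then have to scale like $(1+\xi/|x_0|)^{(\gamma+1)(m-1)}$, i.e.\ $C_2$ would depend on $\xi$ and $x_0$. This cannot be what is claimed: the lemma explicitly asserts $C_2=C_2(N,m,\alpha,\tau)$ is independent of $\xi$, and this independence is used in the application (in the proof of Theorem~\ref{ThmInfv} the parameter $\xi$ is tuned last, after $C_2$ and hence $G$ have been fixed, so a $\xi$--dependent $C_2$ would make the construction circular). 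The paper avoids your difficulty by a more careful factorization: using the identity $\gamma=(\gamma+1)(m-1)+1+2\alpha$ they write $(|x|+\xi)^{-\gamma}=(|x|+\xi)^{-(\gamma+1)(m-1)}(|x|+\xi)^{-(1+2\alpha)}$, so the factor $(|x|+\xi)^{-(\gamma+1)(m-1)}$ appears verbatim in both the time term and the nonlocal term and can simply be pulled out; the only comparison needed on the leftover factor is $(|x|+\xi)^{-(1+2\alpha)}\le|x|^{-(1+2\alpha)}$, which holds with constant $1$ for every $\xi>0$. Replacing your $r+\xi\lesssim r$ step with that factorization fixes the gap without changing anything else in your argument.
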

\begin{proof}
We start by checking under which conditions $\Phi_\epsilon$ satisfies \eqref{ineqPhi}, that is, $\Phi$ is a classical sub-solution of equation \eqref{IntegEq} in $Q$. To this aim, we have that
\begin{align*}
&(\Phi_\epsilon)_t  + |(\Phi_\epsilon)_x|^{m-1} \lapal \Phi_\epsilon =b \gamma \frac{(t+\tau)^{b \gamma-1}}{(|x|+\xi)^{\gamma}} +
\gamma^{m-1}\frac{(t+\tau)^{b \gamma(m-1)}}{(|x|+\xi)^{(\gamma+1)(m-1)}} \lapal \Phi_\epsilon(x,t) \\
&=b \gamma \frac{(t+\tau)^{b \gamma-1}}{(|x|+\xi)^{\gamma}} +
\gamma^{m-1}\frac{(t+\tau)^{b \gamma(m-1)}}{(|x|+\xi)^{(\gamma+1)(m-1)}} (t+\tau)^{b\gamma}
\bigg(\lapal [ (|x| +\xi)^{-\gamma}]+ \lapal G\bigg) .
\end{align*}
Now, by Lemma \ref{LemmaFelix} we get the estimate $\lapal \left((|x|+\xi)^{-\gamma} \right)\le  C_3 |x|^{-(1+2\alpha)}$ for all $|x|\ge |x_0|$, with positive constant $C_3=C_3(N,m,\alpha).$
At this step, we choose the parameter $C_2$ in the assumption (G2) to be at least $C_2>C_3$. The precise choice will be deduced later.  Since $\gamma=(\gamma+1)(m-1)+1+2\alpha$, we continue as follows:
\begin{align*}
(\Phi_\epsilon)_t  &+ |(\Phi_\epsilon)_x|^{m-1} \lapal \Phi_\epsilon\\
&\le b \gamma \frac{(t+\tau)^{b \gamma-1}}{(|x|+\xi)^{\gamma}} +
\gamma^{m-1}\frac{(t+\tau)^{b \gamma m}}{(|x|+\xi)^{(\gamma+1)(m-1)}}  ( C_3-C_2)|x|^{-(1+2\alpha)} \\
&={(|x|+\xi)^{-(\gamma+1)(m-1)}} \cdot \\
&\quad \cdot
\left(  b \gamma (t+\tau)^{b \gamma-1} {(|x|+\xi)^{-(1+2\alpha)}} + \gamma^{m-1} (t+\tau)^{b \gamma m}(C_3-C_2)|x|^{-(1+2\alpha)} \right)\\
&\le (|x|+\xi)^{-(\gamma+1)(m-1)}|x|^{-(1+2\alpha)}\left(  b \gamma (t+\tau)^{b \gamma-1}  + \gamma^{m-1} (t+\tau)^{b \gamma m}(C_3-C_2) \right)
\end{align*}
which is negative for all $(x,t)\in Q$, if we ensure that $C_2$ is such that:
\begin{equation}\label{C2}
C_2>C_3+b\gamma^{2-m}\tau^{b\gamma(1-m)-1}.
\end{equation}
This choice of $C_2$ is independent on the parameters $\xi,\ \epsilon.$
\end{proof}

From now on, we will take $\tau=1$, which will be enough for our purpose.
We can now prove the main result for the model \eqref{IntegEq} which in particular implies the infinite speed of propagation of model \eqref{model1} for $1<m<2$ in dimension $N=1$.

\subsection{Proof Theorem \ref{ThmInfv}}
 Let $x_0<0$ fixed. We prove that $v(x,t)>0$ for all $t>0$ and $x<x_0$. By scaling arguments, the initial data $v_0$ with properties \eqref{v0Assump}, satisfies
\begin{equation}\label{u0Heaviside}v_0(x)\ge H_{x_0}(x)=\left\{
  \begin{array}{ll}
    0, & \hbox{$x< x_0$,} \\[2mm]
  1, & \hbox{$x>x_0$.}
  \end{array}\right.\end{equation}

We will prove that $v(x,t) \ge \Phi_\epsilon(x,t)$ in the parabolic domain $Q_T=\{x<x_0, \ t \in [0,T]\}$ by using as an essential tool the Parabolic Comparison Principle established in Proposition \ref{ComparPrinc}.  We describe the proof in the graphics below, where the barrier function is represented, for simplicity, without the modification caused by the function $G(\cdot)$ (Figure \ref{FigBarrier1}).

To this aim we check the required conditions in order to apply the above mentioned comparison result.

\noindent $\bullet$ {\bf  Comparison on the parabolic boundary.} This will be done in two steps.

\noindent (a) \textbf{Comparison at the initial time.} The initial data \eqref{u0Heaviside} naturally impose the following conditions on $\Phi_\epsilon$. At time $t=0$ we have $\Phi_\epsilon(x_0,0)<0$, which holds only if $\xi$ satisfies
\begin{equation}\label{cond1xi}
 \xi > x_0+ \epsilon^{-\frac{1}{\gamma}}.
\end{equation}
Therefore $\Phi_\epsilon(x_0,0) <v_0(x_0)$ since $v_0(x_0)>0$.

\noindent (b) \textbf{Comparison on the lateral boundary.} Let $\displaystyle k_1 :=\min\{ v(x,t): \ x\ge x_0, \ 0< t\le T \}$ with $k_1>0.$  This results follows from the continuity $v\in C([0,T]:C(\R))$ since $v_0(x_0)=1$.
We impose the condition
$$
\Phi_\epsilon(x,t) < v(x,t) \quad \text{ for all }x\ge x_0, \ t\in [0,T].
$$
It is sufficient to have
$$
 (T+1)^{b \gamma} (\xi^{-\gamma}+C_1 ) < k_1.
$$
The maximum value of $T$ for which this inequality holds is
\begin{equation}\label{Tmax}
T<  \left(\frac{k_1}{ \xi^{-\gamma}+C_1 } \right)^{1/b\gamma}-1.
\end{equation}
We need to impose a compatibility condition on the parameters in order to have $T>0$, that is:
\begin{equation}\label{cond2xi}
\xi > (k_1-C_1)^{-\frac{1}{\gamma}}.
\end{equation}
The remaining parameter $C_1$ from assumption (G2) is chosen here such that: $C_1<k_1$.

By Proposition \ref{ComparPrinc} we obtain the desired comparison
$$
v(x,t) \ge \Phi_\epsilon(x,t) \quad \text{for all } (x,t) \in Q_T.
$$

\noindent $\bullet$ {\bf Infinite speed of propagation}. Let $x_1 < x_0$ and $t_1 \in (0,T)$ where $T$ is given by \eqref{Tmax}. We prove there exists a suitable choice of $\xi$ and $\epsilon$ such that $\Phi_\epsilon(x_1,t_1)>0$. This is equivalent to impose the following upper bound on $\xi$:
\begin{equation}\label{cond3xi}
 \xi < x_1 + (t_1+1)^b \epsilon^{-\frac{1}{\gamma}}.
\end{equation}
We need to check now if there exists $\epsilon>0 $ such that condition \eqref{cond3xi} is compatible with conditions \eqref{cond1xi} and \eqref{cond2xi}. For the compatibility of conditions  \eqref{cond1xi} and \eqref{cond2xi} we have
\[
x_0+ \epsilon^{-\frac{1}{\gamma}} <\xi< x_1 + (t_1+1)^b \epsilon^{-\frac{1}{\gamma}},
\]
that is,
\begin{equation}\label{epsilon1}
\epsilon < \left[\frac{(t_1+1)^b-1}{x_0-x_1}\right]^\gamma.
\end{equation}
For conditions \eqref{cond2xi} and \eqref{cond3xi} we need
\[
(k_1-C_1)^{-\frac{1}{\gamma}} \leq \xi < x_1 + (t_1+1)^b \epsilon^{-\frac{1}{\gamma}},
\]
which is equivalent to
\begin{equation}\label{epsilon2}
\epsilon  < \left[\frac{(t_1+1)^b}{(k_1-C_1)^{-\frac{1}{\gamma}} -x_1}\right]^\gamma.
\end{equation}
Both upper bounds \eqref{epsilon1} and \eqref{epsilon2} make sense since $0>x_0>x_1$ and $k_1>C_1$.

\noindent\textbf{Summary.} The proof was performed in a constructive manner and we summarize it as follows:
$C_1<k_1$, $T$ given by \eqref{Tmax}. Then by taking  $\epsilon$ the minimum of \eqref{epsilon1}-\eqref{epsilon2}, $\xi$ satisfying \eqref{cond1xi}-\eqref{cond2xi}-\eqref{cond3xi} we obtain that $\Phi(t_1,x_1)>0$.

This proofs that $v(t_1,x_1)>0$ for any $t\in (0,T).$

\qed

\definecolor{ffqqqq}{rgb}{1,0,0}
\definecolor{qqttcc}{rgb}{0,0.2,0.8}
\definecolor{sqsqsq}{rgb}{0.125,0.125,0.125}

\definecolor{ffqqqq}{rgb}{1,0,0}
\definecolor{qqqqcc}{rgb}{0,0,0.8}
\definecolor{sqsqsq}{rgb}{0.125,0.125,0.125}

\begin{figure}[ht!]
\centering
\begin{tikzpicture}[line cap=round,line join=round,>=triangle 45,x=7.5cm,y=4.2cm]
\clip(-0.896,-0.128) rectangle (1.145,1.11);
\draw [line width=1.6pt,color=sqsqsq] (0,-0.128) -- (0,1.11);
\draw [line width=1.2pt,color=sqsqsq,domain=-0.896:1.145] plot(\x,{(-0-0*\x)/5.977});
\draw [dash pattern=on 1pt off 1pt,color=sqsqsq,domain=-0.896:1.145] plot(\x,{(--2-0*\x)/2});
\draw [line width=2.8pt,color=qqttcc,domain=-0.896473504307193:-0.27353490278664827] plot(\x,{(-0-0*\x)/-0.471});
\draw [dash pattern=on 1pt off 1pt,color=sqsqsq] (-0.274,0)-- (-0.274,1);
\draw [line width=2.8pt,color=qqttcc,domain=-0.27353490278664816:1.145157305834116] plot(\x,{(--0.844-0*\x)/0.844});
\draw [dash pattern=on 1pt off 1pt,color=sqsqsq,domain=-0.896:1.145] plot(\x,{(--0.092-0*\x)/-0.92});
\draw [shift={(-0.575,0.493)},line width=2pt,color=ffqqqq]  plot[domain=4.749:6.18,variable=\t]({1*0.579*cos(\t r)+0*0.579*sin(\t r)},{0*0.579*cos(\t r)+1*0.579*sin(\t r)});
\draw [line width=2pt,color=ffqqqq] (-0.554,-0.085)-- (-1.424,-0.09);
\draw [shift={(0.584,0.496)},line width=2pt,color=ffqqqq]  plot[domain=3.248:4.654,variable=\t]({1*0.587*cos(\t r)+0*0.587*sin(\t r)},{0*0.587*cos(\t r)+1*0.587*sin(\t r)});
\draw [line width=2pt,color=ffqqqq,domain=0.55:1.145157305834116] plot(\x,{(-0.087-0.005*\x)/0.989});
\draw (1.047,0.003) node[anchor=north west] {$\textcolor{black}{x}$};
\draw (0.1,0.24) node[anchor=north west] {$\textcolor{black}\Phi_\epsilon(x,0)$};
\draw (-0.561,0.15) node[anchor=north west] {$\textcolor{black}v_0(x)$};
\draw (-0.051,1.12) node[anchor=north west] {\textcolor{black}1};
\draw (-0.051,0.096) node[anchor=north west] {\textcolor{black}0};
\draw (-0.09,-0.025) node[anchor=north west] {$\textcolor{black}-\mathbb{\epsilon}$};
\begin{scriptsize}
\fill [color=sqsqsq] (-0.274,0) circle (1.5pt);
\draw[color=sqsqsq] (-0.245,-0.03) node {$x_0$};
\end{scriptsize}
\end{tikzpicture}
\caption{Comparison with the barrier at time $t=0$}
\label{FigBarrier0}
\end{figure}
\begin{figure}
\centering
\begin{tikzpicture}[line cap=round,line join=round,>=triangle 45,x=7.5cm,y=4.2cm]
\clip(-0.896,-0.128) rectangle (1.145,1.11);
\draw [line width=1.6pt,color=sqsqsq] (0,-0.128) -- (0,1.11);
\draw [line width=1.2pt,color=sqsqsq,domain=-0.896:1.145] plot(\x,{(-0-0*\x)/5.977});
\draw [dash pattern=on 1pt off 1pt,color=sqsqsq,domain=-0.896:1.145] plot(\x,{(--2-0*\x)/2});
\draw [dash pattern=on 1pt off 1pt,color=sqsqsq,domain=-0.896:1.145] plot(\x,{(--0.092-0*\x)/-0.92});
\draw (1.047,0.003) node[anchor=north west] {$\textcolor{black}{x}$};
\draw (0.35,0.21) node[anchor=north west] {$\textcolor{black}\Phi_\epsilon(x,t_1)$};
\draw (-0.051,1.12) node[anchor=north west] {\textcolor{black}1};
\draw (-0.051,0.096) node[anchor=north west] {\textcolor{black}0};
\draw (-0.09,-0.025) node[anchor=north west] {$\textcolor{black}-\mathbb{\epsilon}$};
\draw [shift={(-0.74,0.598)},line width=2pt,color=qqqqcc]  plot[domain=4.721:5.917,variable=\t]({1*0.564*cos(\t r)+0*0.564*sin(\t r)},{0*0.564*cos(\t r)+1*0.564*sin(\t r)});
\draw [shift={(0.604,0.103)},line width=2pt,color=qqqqcc]  plot[domain=1.633:2.798,variable=\t]({1*0.868*cos(\t r)+0*0.868*sin(\t r)},{0*0.868*cos(\t r)+1*0.868*sin(\t r)});
\draw [line width=2pt,color=qqqqcc,domain=-0.896473504307193:-0.7353358406914835] plot(\x,{(-0.005-0*\x)/-0.138});
\draw [line width=2pt,color=qqqqcc,domain=0.55:1.145157305834116] plot(\x,{(--0.444--0.009*\x)/0.462});
\draw [shift={(-0.879,0.826)},line width=2pt,color=ffqqqq]  plot[domain=4.667:6.081,variable=\t]({1*0.897*cos(\t r)+0*0.897*sin(\t r)},{0*0.897*cos(\t r)+1*0.897*sin(\t r)});
\draw [shift={(0.879,0.826)},line width=2pt,color=ffqqqq]  plot[domain=3.343:4.759,variable=\t]({1*0.897*cos(\t r)+0*0.897*sin(\t r)},{0*0.897*cos(\t r)+1*0.897*sin(\t r)});
\draw [line width=2pt,color=ffqqqq,domain=0.92:1.145157305834116] plot(\x,{(-0.014-0.001*\x)/0.215});
\draw (-0.55,0.3) node[anchor=north west] {$\textcolor{black}v(x,t_1)$};
\begin{scriptsize}
\fill [color=sqsqsq] (-0.274,0) circle (1.5pt);
\draw[color=sqsqsq] (-0.245,-0.03) node {$x_0$};
\fill [color=sqsqsq] (-0.529,0) circle (1.5pt);
\draw[color=sqsqsq] (-0.499,-0.03) node {$x_1$};
\end{scriptsize}
\end{tikzpicture}\caption{Comparison with the barrier at time $t>0$} \label{FigBarrier1}

\end{figure}


\medskip

\noindent\textbf{Remark.} The parameter $\xi$ of the barrier depends on $\epsilon$ by \eqref{cond1xi} and \eqref{cond3xi} and therefore $\xi \to \infty$ as $\epsilon \to 0.$
Therefore $\Phi_\epsilon(x,t) \to 0$ as $\epsilon \to 0$ for every $(x,t) \in Q_T$ and we can not derive a lower parabolic estimate for $v(x,t)$ in $Q_T.$


\section{Appendix}\label{sectionApend}

\subsection{Estimating the Fractional Laplacian}
In this section we are interested in estimating the fractional Laplacian of given functions. We recall the definition of the Fractional Laplacian operator
\begin{equation*}
(-\Delta)^{s}u(x)=\sigma_{N,s} \ P.V. \int_{\RN}\frac{u(x)-u(y)}{|x-y|^{N+2s}}, \quad 0<s<1,
\end{equation*}
where $\sigma_s$ a normalization constant given by
\begin{equation*}
\sigma_{N,s}=\frac{2^{2s}\Gamma(\frac{N+2s}{2})}{\pi^{N/2}\Gamma(-N/2)}.
\end{equation*}

First, given the expression of the fractional Laplacian, we construct a function with the desired properties.

\begin{lemma}\label{LemmaG}
Given two arbitrary constants $C_1, C_2>0$ there exists a function $G: \mathbb{R}\to [0,+\infty)$ with the following properties:
\begin{enumerate}
  \item $G$ is compactly supported.
  \item $G(x) \le C_1$ for all $x\in \mathbb{R}$
  \item $(-\Delta)^s G(x) \le -C_2|x|^{-(1+2s)}$ for all $x \in \mathbb{R}$ with $d(x, \mbox{supp}(G))\ge 1.$
\end{enumerate}
\end{lemma}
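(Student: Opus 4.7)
The plan is to take $G$ as (a smooth regularization of) a constant multiple of the characteristic function of a suitable interval $[A,B] \subset (0,\infty)$, with $\|G\|_\infty \le C_1$, and to read the estimate directly off the singular-integral representation of the fractional Laplacian. Since $G \ge 0$ is compactly supported and the point $x$ under consideration lies at distance at least $1$ from $\mbox{supp}(G)$ (in particular outside the support), the principal value is not needed and
\[
(-\Delta)^s G(x) \;=\; -\sigma_{1,s}\int_A^B \frac{G(y)}{|x-y|^{1+2s}}\,dy,
\]
which is manifestly negative. Only its size has to be controlled.

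The key step is a geometric comparison of $|x-y|$ with $|x|$: for $y\in [A,B]$ and $x$ admissible, I will look for a constant $\kappa=\kappa(A,B)>0$ such that $|x-y|\le \kappa |x|$, whence $|x-y|^{-(1+2s)}\ge \kappa^{-(1+2s)}|x|^{-(1+2s)}$. When $|x|\ge 2B$ this is immediate, since $|x-y|\le |x|+B\le \tfrac{3}{2}|x|$; for $|x|$ of the order of $A$ or $B$ one uses that the admissibility condition $d(x,[A,B])\ge 1$, together with the lower bound on $|x|$ that holds in the application (in Lemma \ref{LemmaBarrier} we have $x<x_0<0$, hence $|x|\ge |x_0|$), forces both $|x-y|$ and $|x|$ into a bounded range depending only on $A$, $B$ and $|x_0|$. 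Substituting into the previous display gives
\[
(-\Delta)^s G(x) \;\le\; -\sigma_{1,s}\,\kappa^{-(1+2s)}\Bigl(\int_A^B G(y)\,dy\Bigr) |x|^{-(1+2s)}.
\]

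The proof is then completed by making the plateau of $G$ long enough that $\sigma_{1,s}\kappa^{-(1+2s)}\int G \ge C_2$; since the only pointwise restriction is $G\le C_1$ and we are free to enlarge $B$, this is always achievable. The main bookkeeping subtlety I anticipate is verifying that enlarging $B$ does not degrade the comparison constant $\kappa$; fixing $A$ (so that $\mbox{supp}(G)$ stays at a controlled positive distance from the region where the estimate is required) is what keeps $\kappa$ independent of $B$. A secondary remark is that the inequality (G3), as literally stated, cannot hold uniformly over every $x$ with $d(x,\mbox{supp}(G))\ge 1$ unless $|x|$ is simultaneously bounded below; in practice this is automatic, since in Lemma \ref{LemmaBarrier} the property is applied only for $x<x_0<0$, and it is in this sense that (G3) should be understood.
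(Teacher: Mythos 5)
Your argument has a genuine gap at exactly the point you flag as a ``bookkeeping subtlety,'' and unfortunately the subtlety is fatal rather than merely annoying. You want a constant $\kappa$ with $|x-y|\le \kappa|x|$ for all admissible $x$ and all $y\in[A,B]$, and you claim that fixing $A$ keeps $\kappa$ independent of $B$. That is false. Take $x<0$ admissible with $|x|$ of the order of $|x_0|$ (fixed) and $y$ near $B$; then $|x-y|=|x|+y\approx B$, while $|x|\approx|x_0|$ stays bounded, so the best you can do is $\kappa\gtrsim B/|x_0|$, which grows linearly in $B$. Feeding that into your display, the coefficient $\kappa^{-(1+2s)}\int_A^B G$ behaves like $B^{-(1+2s)}\cdot C_1 B = C_1 B^{-2s}\to 0$ as $B\to\infty$: lengthening the plateau makes your lower bound on the mass integral \emph{worse}, not better. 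You can see the same obstruction directly: with $G\equiv C_1$ on $[A,B]$ and $x<0$ fixed,
\begin{equation*}
\int_A^B \frac{C_1\,dy}{(|x|+y)^{1+2s}}=\frac{C_1}{2s}\left[\frac{1}{(|x|+A)^{2s}}-\frac{1}{(|x|+B)^{2s}}\right]\le \frac{C_1}{2s(|x|+A)^{2s}},
\end{equation*}
which is bounded independently of $B$, so no choice of $B$ alone can beat an arbitrary $C_2$ at the smallest admissible $|x|$.

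The paper's construction is different in a way that matters. Instead of growing a plateau to the right of a fixed foot, it takes a fixed bump $G_1\le C_1$ supported in $[-1,1]$ and \emph{dilates} it: $G_R(x)=G_1(x/R)$, supported in $[-R,R]$. Then $\|G_R\|_{L^1}=R\|G_1\|_{L^1}$ grows linearly in $R$, but --- and this is the point --- the admissibility condition $d(x,\mbox{supp}(G_R))\ge 1$ now \emph{forces} $|x|\ge R+1$, so that $|x-y|\le |x|+R<2|x|$ with the universal constant $\kappa=2$ for every admissible $x$. Because the support contains the origin and spreads on both sides, enlarging it pushes the minimal admissible $|x|$ out at the same rate, and the ratio $|x-y|/|x|$ never degrades. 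This also answers your secondary remark: with the paper's $G_R$ there is no need to artificially impose a lower bound on $|x|$; the constraint $d(x,\mbox{supp}(G_R))\ge 1$ already implies $|x|\ge R+1>1$, so (G3) holds for every admissible $x$, exactly as stated. Your worry is real for your construction (a one-sided support with a fixed foot $A$ admits admissible $x$ with $|x|$ small when $A>1$), but it is a symptom of that choice, not a defect of the lemma.
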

\begin{proof}
Let $R$ an arbitrary positive number to be chosen later.
We consider a smooth function  $G_1: \mathbb{R}\to [0,+\infty)$ such that $G_1(x) \le C_1$ for all $x \in \mathbb{R}$ and supported in the interval $[-1,1]$.

We define $G_R(x)=G_1(x/R)$. Therefore $\|G_R\|_{L^1(\mathbb{R})} = R \|G_1\|_{L^1(\mathbb{R})}$, $G_R \le C_1$ and $G$ is supported in the interval $[-R,R]$. Then for $|x|\ge R+1$ we have that
\begin{align*}
(-\Delta)^s G_R(x)&=\sigma_{s} \int_{\mathbb{R}}\frac{G_R(x)-G_R(y)}{|x-y|^{1+2s}} dy = -\sigma_{s} \int_{-R}^R\frac{G_R(y)}{|x-y|^{1+2s}} dy \\
&\le -\sigma_{s} \int_{-R}^R \frac{G_R(y)}{(|x|+R)^{1+2s}} dy =-\sigma_s (|x|+R)^{-(1+2s)}\|G_R\|_{L^1(\mathbb{R})} \\
&\le -\sigma_{s} 2^{-(1+2s)}\|G_R\|_{L^1(\mathbb{R})}|x|^{-(1+2s)}=-\sigma_{s} 2^{-(1+2s)}R\|G_1\|_{L^1(\mathbb{R})}|x|^{-(1+2s)}.
\end{align*}
It is enough to choose $\displaystyle R\geq \frac{C_2 2^{1+2s}}{\sigma_s ||G_1||_{L^1(\mathbb{R})}}$ to get $(-\Delta)^s G_R(x)\leq C_2 |x|^{-(1+2s)}$.  Note that $R$ implicitly depends on $C_1$ since $||G_1||_{L^1(\mathbb{R})}\leq 2C_1$.

\end{proof}

Secondly, we need to estimate the fractional Laplacian of a negative power function. The following result is similar to one proven by Bonforte and V\'{a}zquez in Lemma 2.1 from \cite{BV2012} with the main difference that our function is $C^2$ away from the origin. We make a brief adaptation of their proof to our situation.

\begin{lemma}\label{LemmaFelix}
Let $\varphi :\mathbb{R}\to (0,\infty)$, $\varphi=(|x|+\xi)^{-\gamma}$, where $\gamma>1$ and $\xi>0.$
  Then, for all $|x|\ge |x_0|>1 $, we have that
\begin{equation}
|(-\Delta)^s \varphi(x)| \le \frac{C}{|x|^{1+2s}},
\end{equation}
with positive constant $C>0$ that depends only on $\gamma,\ \xi, \ s$.
\end{lemma}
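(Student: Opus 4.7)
The plan is to prove the bound directly from the singular integral representation
$$
(-\Delta)^s\varphi(x)=\sigma_{1,s}\,\text{P.V.}\int_{\R}\frac{\varphi(x)-\varphi(y)}{|x-y|^{1+2s}}\,dy,
$$
by splitting the domain into a near-field region around $x$ (where $\varphi$ is smooth and one exploits cancellation) and a far-field region (where one simply uses the decay and $L^1$ integrability of $\varphi$). By symmetry we may assume $x>0$; set $r=|x|=x\ge|x_0|>1$, and split at the cutoff $|y-x|\le r/2$. On the near-field region $|y-x|\le r/2$ we have $y\in[r/2,3r/2]$, so $y$ stays away from the only singular point of $\varphi$ (the origin), and $\varphi$ is $C^2$ there with
$$
|\varphi''(z)|=\gamma(\gamma+1)(|z|+\xi)^{-\gamma-2}\le C(\gamma,\xi)\,r^{-\gamma-2}\qquad\text{on }[r/2,3r/2].
$$

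For the near-field integral $I_1$, I would Taylor expand $\varphi(y)=\varphi(x)+\varphi'(x)(y-x)+\tfrac12\varphi''(\zeta)(y-x)^2$. The linear term vanishes by odd symmetry in the principal-value sense on the symmetric interval $\{|y-x|\le r/2\}$, and the quadratic remainder is controlled by
$$
|I_1|\le C\,\|\varphi''\|_{L^\infty([r/2,3r/2])}\int_{|y-x|\le r/2}|y-x|^{1-2s}\,dy\le C\,r^{-\gamma-2}\cdot r^{2-2s}=C\,r^{-\gamma-2s},
$$
and since $\gamma>1$ and $r\ge 1$ this is bounded by $C\,r^{-1-2s}$. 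For the far-field integral $I_2=\int_{|y-x|>r/2}$ I would split $\varphi(x)-\varphi(y)$ into the two pieces. The $\varphi(x)$-piece gives $\varphi(x)\int_{|y-x|>r/2}|y-x|^{-1-2s}\,dy\le C\,r^{-\gamma}\cdot r^{-2s}$. The $\varphi(y)$-piece is handled by pulling out the uniform lower bound $|y-x|\ge r/2$ on the kernel and using that $\varphi\in L^1(\R)$ (this is where $\gamma>1$ is crucial), giving $\le C(r/2)^{-1-2s}\|\varphi\|_{L^1}\le C\,r^{-1-2s}$.

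Combining yields $|(-\Delta)^s\varphi(x)|\le C\,|x|^{-1-2s}$ with a constant depending only on $\gamma,\xi,s$. The only delicate point is choosing the split radius correctly: one needs the near-field to sit inside the smoothness region of $\varphi$ (hence the choice $r/2$, using $|x|\ge|x_0|>1$) so that $\varphi''$ remains uniformly bounded there, and one needs the far-field tail estimate to produce exactly the $|x|^{-1-2s}$ decay, which forces using the $L^1$ norm (available because $\gamma>1$) rather than a pointwise bound on $\varphi(y)$. Everything else is routine computation of one-dimensional power integrals.
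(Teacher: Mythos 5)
Your proof is correct and follows essentially the same strategy as the paper's: a near/far decomposition at scale $|x|/2$, with the near-field (the paper's region $R_3$) controlled by the local $C^2$ bound on $\varphi$ together with Taylor/second-difference cancellation, and the far-field controlled by the kernel decay $|x-y|^{-1-2s}$ and the $L^1$ norm of $\varphi$ (using $\gamma>1$). The only difference is cosmetic: the paper splits the far-field into the three annular regions $R_1,R_2,R_4$, while you merge them by bounding $|\varphi(x)-\varphi(y)|\le\varphi(x)+\varphi(y)$, which yields the same $|x|^{-\gamma-2s}+|x|^{-1-2s}\lesssim |x|^{-1-2s}$ bound.
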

\begin{proof}

Let us first estimate the $L^1$ norm of $\varphi$.
\begin{align*}
\int_{\R}\varphi(x) dx &= \int_{|x|<1}\varphi(x) dx +\int_{|x|>1}\varphi(x) dx \le \int_{|x|<1}\xi^{-\gamma} dx + \int_{|x|>1}x^{-\gamma} dx \\
&\le 2 \xi^{-\gamma} + 2\int_{1}^{\infty}r^{-\gamma}dr = 2 \xi^{-\gamma} + \frac{2}{\gamma-1} <C, \quad C= C(\gamma, \xi).
\end{align*}
Following the ideas of \cite{BV2012} Lemma 2.1, the computation of the $(-\Delta)^s \varphi(x)$ is based on estimating the integrals on the regions
$$
R_1=\{y: \ |y| > 3|x|/2 \},\quad R_2=\left\{ y: \ \frac{|x|}{2} < |y| < \frac{3|x|}{2} \right\} \setminus B_{\frac{|x|}{2}}(x) ,
$$
$$
R_3=\{ y: \ |x-y| < |x|/2 \} ,\quad R_4=\{ y: \ |y| < |x|/2 \} .
$$
Therefore
$$
(-\Delta)^s \varphi(x)=\int_{R_1\cup R_2\cup R_3 \cup R_4} \frac{\varphi(x)-\varphi(y)}{|x-y|^{1+2s}} dy.
$$
We proceed with the estimate of each of the four integrals:
\begin{align*}
I&=\int_{|y| > 3|x|/2 } \frac{\varphi(x)-\varphi(y)}{|x-y|^{1+2s}} dy \le \omega_d \varphi(x) \int_{ 3|x|/2 }^{\infty}\frac{dr}{r^{1+2s}} = \frac{K_1}{|x|^{\gamma+2s}}, \quad K_1=K_1(\gamma,s).
\end{align*}
\begin{align*}
II&=\int_{R_2} \frac{\varphi(x)-\varphi(y)}{|x-y|^{1+2s}} dy \le \frac{\varphi(x)}{(|x|/2)^{1+2s}} \int_{ |x|/2 }^{ 3|x|/2 }dr =\frac{K_2}{|x|^{\gamma+2s}},\quad K_2=K_2(\gamma,s).
\end{align*}
\begin{align*}
III&=\int_{R_3} \frac{\varphi(x)-\varphi(y)}{|x-y|^{1+2s}} dy \le \|\varphi''\|_{L^\infty(B_{|x|/2}(x) )} \int_{|x-y|\le|x|/2} \frac{1}{|x-y|^{2s-1}}dy \\
&\le \frac{K_3'}{|x|^{\gamma+2}} \int_0^{|x|/2} \frac{1}{r^{2s-1}}dr \le \frac{K_3}{|x|^{\gamma+2s}},\qquad \qquad \qquad \qquad\qquad K_3=K_3(\gamma,s).
\end{align*}
For $IV$ we take use that when $|y|<|x|/2$ then $|y-x|\ge  |x|/2$ and $|y|<|x|$ which implies $\varphi(y)>\varphi(x).$ We have
\begin{align*}
IV&\le \int_{|y|<|x|/2} \frac{|\varphi(x)-\varphi(y)|}{|x-y|^{1+2s}}dy\le \left(\frac{2}{|x|}\right)^{1+2s}\int_{|y|<|x|/2} \varphi(y)dy \le \left(\frac{2}{|x|}\right)^{1+2s}\|\varphi\|_{L^1(\R)} \\
&\le \frac{K_4}{|x|^{1+2s}}, \quad K_4= K_4(\gamma,s,\xi).
\end{align*}
Since $\gamma>1$, we  can conclude that
$$|(-\Delta)^s \varphi(x)| \le |I|+|II|+|III|+|IV|= K_5 |x|^{-\gamma-2s} + K_4 |x|^{-1-2s} \le K_6 |x|^{-1-2s}, \quad \forall |x|\ge|x_0| >1.$$
\end{proof}

\subsection{Reminder on cut-off functions}\label{cutoffSect}

We remind the construction of cut-off functions. Let $$f(x)= \left\{
\begin{array}{ll}
 e^{-1/x},& \, x \ge 0,\\[2mm]
0, & \, x<0.
\end{array}
\right. $$
Then $f\in C^\infty(\R)$. Let
$$F(x)=\frac{f(x)}{f(x)+f(1-x)}, \quad x \in \R.$$
Then $F(x)=0$ for $x<0$, $F(x)=1$ for $x\ge 1$ and $F(x)\in (0,1)$ for $x\in (0, 1).$
We construct now the cut-off function $\varphi:\RN \to ([0,1]$ by:
$$\varphi(x) =  F(2-|x|), \, \x \in \RN.$$
Then $\varphi \in C^\infty(\RN)$, $\varphi(x) =1$ for $|x|\le 1$, $\varphi(x)=0$ for $|x|\ge 2$ and $\varphi(x) \in (0,1)$ for $|x|\in (1,2)$.
The cut-off function for $B_R$ is obtained by
$$\varphi_R(x)=\varphi(x/R).$$ Thus $\varphi_R \in C^\infty(\RN)$, $\varphi_R(x) =1$ for $|x|\le R$, $\varphi_R(x)=0$ for $|x|\ge 2R$ and $\varphi_R(x) \in (0,1)$ for $|x|\in (R,2R)$.
Also, we have that $\nabla (\varphi_R) =O(R^{-1})$, $\Delta(\varphi_R) =O(R^{-2}).$

\subsection{Compact sets in the space $L^p(0,T;B)$}

Necessary and sufficient conditions of convergence in the spaces $L^p(0,T;B)$ are given by Simon in \cite{simon}. We recall now their applications to evolution problems. We consider the spaces $X\subset B\subset Y$ with compact embedding $X\rightarrow B$.

\begin{lemma}\label{ConvSimon1}Let $\mathcal{F}$ be a bounded family of functions in $L^p(0,T;X)$, where $1 \leq p <\infty$ and $\partial \mathcal{F}/\partial t=\{\partial f/\partial t: f \in \mathcal{F}\}$ be bounded in  $L^1(0,T;Y)$. Then the family $\mathcal{F}$ is relatively compact in $L^p(0,T;B)$.
\end{lemma}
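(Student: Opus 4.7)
The plan is to apply the Fr\'echet--Kolmogorov characterisation of relative compactness in the Bochner space $L^p(0,T;B)$: it suffices to verify (a) an $L^p$--time equicontinuity
$$\sup_{f\in\mathcal{F}}\|f(\cdot+h)-f(\cdot)\|_{L^p(0,T-h;B)}\longrightarrow 0\quad\text{as }h\to 0^+,$$
together with (b) a time-slice compactness condition ensuring that a suitable family of time averages of $\mathcal{F}$ lies in a relatively compact subset of $B$.

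The algebraic backbone is the Ehrling--Lions interpolation inequality, available because $X\hookrightarrow B$ is compact and $B\hookrightarrow Y$ is continuous: for every $\eta>0$ there exists $C_\eta>0$ with
$$\|v\|_B\le \eta\,\|v\|_X + C_\eta\,\|v\|_Y\qquad\text{for all }v\in X.$$
Applied to the difference $v=f(t+h)-f(t)$ and integrated in $t$, this yields
$$\|f(\cdot+h)-f(\cdot)\|_{L^p(0,T-h;B)}\le 2\eta\,\|f\|_{L^p(0,T;X)} + C_\eta\,\|f(\cdot+h)-f(\cdot)\|_{L^p(0,T-h;Y)}.$$
The first term is uniformly bounded on $\mathcal{F}$ by hypothesis. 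For the second, the fundamental theorem of calculus for the Bochner integral gives $\|f(t+h)-f(t)\|_Y\le \int_t^{t+h}\|\partial_t f(s)\|_Y\,ds$; a Fubini argument combined with the uniform bound $\partial_t f\in L^1(0,T;Y)$ shows that this quantity tends to $0$ as $h\to 0^+$, uniformly in $f\in\mathcal{F}$. Choosing first $\eta$ small and then $h$ small delivers~(a).

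For~(b) I would introduce the Steklov averages $J_\varepsilon f(t)=\tfrac{1}{\varepsilon}\int_t^{t+\varepsilon}f(s)\,ds$. The boundedness of $\mathcal{F}$ in $L^p(0,T;X)$ combined with the compactness of $X\hookrightarrow B$ shows that $\{J_\varepsilon f(t):f\in\mathcal{F},\ t\in[0,T-\varepsilon]\}$ is relatively compact in $B$ for each fixed $\varepsilon>0$, while the equicontinuity proved in (a) gives $J_\varepsilon f\to f$ in $L^p(0,T;B)$ as $\varepsilon\to 0$, uniformly in $f\in\mathcal{F}$. A standard totally-bounded-set argument (cover the averaged family by finitely many $B$-balls of small radius and transfer the cover to $\mathcal{F}$ via the uniform approximation) then concludes that $\mathcal{F}$ is relatively compact in $L^p(0,T;B)$.

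The main obstacle will be passing from the $L^1(0,T;Y)$ bound on $\partial_t f$ to an $L^p$-equicontinuity estimate in $Y$: the direct Fubini computation furnishes equicontinuity in $L^1(0,T;Y)$ immediately, but upgrading to $L^p$ requires combining it with a uniform $L^\infty(0,T;Y)$ bound on $f$ itself (which one extracts from $\partial_t f\in L^1(0,T;Y)$ plus the $L^p(0,T;X)$ bound and the embedding $X\hookrightarrow Y$), via an interpolation of the form $\|g\|_{L^p}^p\le \|g\|_{L^\infty}^{p-1}\|g\|_{L^1}$. This is the technical heart of Simon's theorem and is precisely what makes the endpoint hypothesis $\partial_t f\in L^1$ suffice.
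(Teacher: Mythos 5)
The paper does not prove this lemma at all: it is quoted verbatim from Simon's 1987 paper \cite{simon}, which is cited immediately before the statement. So there is no ``paper's own proof'' to compare against; your proposal is, in effect, a reconstruction of Simon's argument, and it is essentially the correct one. Simon's proof combines exactly the two ingredients you identify: a translation (Kolmogorov--Riesz-type) criterion for relative compactness in $L^p(0,T;B)$, together with the Ehrling--Lions interpolation inequality that converts the compact embedding $X\hookrightarrow B$ into a trade-off between the $X$-norm and the $Y$-norm. Your passage from the $L^1(0,T;Y)$ bound on $\partial_t f$ to the $L^p$-equicontinuity estimate is the genuine subtlety, and your fix is the right one: Fubini gives $\|f(\cdot+h)-f(\cdot)\|_{L^1(0,T-h;Y)}\le h\,\|\partial_t f\|_{L^1(0,T;Y)}$, and the interpolation $\|g\|_{L^p}^p\le\|g\|_{L^\infty}^{p-1}\|g\|_{L^1}$ together with a uniform $L^\infty(0,T;Y)$ bound on $\mathcal F$ (obtained, as you say, by averaging $f(t)=f(s)+\int_s^t\partial_\tau f\,d\tau$ over $s$ and using the $L^p(0,T;X)\hookrightarrow L^1(0,T;Y)$ bound) upgrades this to $L^p$. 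The Steklov-average step for the time-slice compactness condition is also standard, though you leave implicit the Arzel\`a--Ascoli argument showing that $\{J_\varepsilon f\}_{f\in\mathcal F}$ is relatively compact in $C([0,T-\varepsilon];B)$ for fixed $\varepsilon$ (pointwise relative compactness in $B$ plus equicontinuity in $t$, the latter following from part (a) via H\"older). That gap is cosmetic; the plan is sound and faithful to Simon's original proof.
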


\begin{lemma}\label{ConvSimon2}Let $\mathcal{F}$ be a bounded family of functions in $L^\infty(0,T;X)$ and $\partial \mathcal{F}/\partial t$ be bounded in  $L^r(0,T;Y)$, where $r>1$. Then the family $\mathcal{F}$ is relatively compact in $C(0,T;B)$.
\end{lemma}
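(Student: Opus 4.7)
The plan is to deduce the statement from the Arzelà--Ascoli theorem applied in the metric space $C([0,T];B)$, after upgrading the time-regularity hypothesis from values in $Y$ to values in $B$ via an interpolation (Ehrling) argument based on the compact embedding $X \hookrightarrow\hookrightarrow B$.

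First I would verify pointwise relative compactness: for every fixed $t\in[0,T]$, the section $\mathcal{F}(t)=\{f(t):f\in\mathcal{F}\}$ is bounded in $X$ by hypothesis, hence relatively compact in $B$ because $X\hookrightarrow\hookrightarrow B$. Next I would establish uniform equicontinuity. Using that $\partial_t f \in L^r(0,T;Y)$ with $r>1$, Hölder's inequality gives
\begin{equation*}
\|f(t_2)-f(t_1)\|_Y \le \int_{t_1}^{t_2}\|\partial_t f(s)\|_Y\,ds \le |t_2-t_1|^{(r-1)/r}\,\|\partial_t f\|_{L^r(0,T;Y)},
\end{equation*}
so $\mathcal{F}$ is uniformly Hölder continuous into $Y$ with exponent $(r-1)/r>0$. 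To pass from $Y$ to $B$, I would invoke Ehrling's lemma (a direct consequence of $X\hookrightarrow\hookrightarrow B\hookrightarrow Y$): for every $\eta>0$ there exists $C_\eta>0$ with
\begin{equation*}
\|v\|_B \le \eta\,\|v\|_X + C_\eta\,\|v\|_Y \qquad \text{for all } v\in X.
\end{equation*}
Applying this to $v=f(t_2)-f(t_1)$ and using the $L^\infty(0,T;X)$ bound together with the estimate above,
\begin{equation*}
\|f(t_2)-f(t_1)\|_B \le 2\eta\,\sup_{f\in\mathcal{F}}\|f\|_{L^\infty(0,T;X)} + C_\eta\,|t_2-t_1|^{(r-1)/r}\,\sup_{f\in\mathcal{F}}\|\partial_t f\|_{L^r(0,T;Y)}.
\end{equation*}
Given $\epsilon>0$, choosing $\eta$ small enough to make the first term $<\epsilon/2$, and then $\delta>0$ small enough so that the second term is $<\epsilon/2$ when $|t_2-t_1|<\delta$, yields equicontinuity into $B$ uniformly over $\mathcal{F}$.

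Once both conditions are in hand, Arzelà--Ascoli for continuous functions from the compact set $[0,T]$ into the Banach space $B$ gives relative compactness of $\mathcal{F}$ in $C([0,T];B)$. The main delicate step is the use of Ehrling's lemma, which requires the compact embedding $X\hookrightarrow\hookrightarrow B$; this is standard but should be stated explicitly since it is the mechanism that converts continuity in the weak norm $Y$ into continuity in the intermediate norm $B$. The hypothesis $r>1$ is essential here, as it provides a strictly positive Hölder exponent $(r-1)/r$; the borderline case $r=1$ would give only absolute continuity into $Y$, which is insufficient for uniform equicontinuity across the whole family.
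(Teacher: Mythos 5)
The paper does not prove this lemma; it is stated as a recalled result from Simon \cite{simon}. Your argument reproduces the standard (and essentially Simon's) proof: Ehrling's inequality from the compact embedding $X\hookrightarrow\hookrightarrow B$, a H\"older-in-time estimate coming from $\partial_t f\in L^r(0,T;Y)$ with $r>1$, and Arzel\`a--Ascoli in $C([0,T];B)$. The structure is correct and the role of $r>1$ is exactly as you describe.

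One technical point deserves care. Boundedness of $\mathcal{F}$ in $L^\infty(0,T;X)$ only gives $\|f(t)\|_X\le M$ for almost every $t$, not for every $t$, so both the claim that ``$\mathcal{F}(t)$ is bounded in $X$ by hypothesis'' for each fixed $t$ and the estimate $\|f(t_2)-f(t_1)\|_X\le 2M$ used inside Ehrling are not literally available at all times. Two standard fixes close this. Either run the equicontinuity estimate on time-translates, obtaining $\|\tau_hf-f\|_{L^\infty(0,T-h;B)}\le 2\eta M + C_\eta\,h^{(r-1)/r}\|\partial_t f\|_{L^r(0,T;Y)}$, which is a genuine essential-sup statement, and then invoke the Ascoli variant (Simon's Theorem~1 and its accompanying remark) that requires pointwise relative compactness of $\mathcal{F}(t)$ only on a dense subset of $[0,T]$, supplied by the a.e.\ bound; or, if $X$ is reflexive, pass to the continuous-into-$Y$ representative of $f$ and use weak lower semicontinuity of $\|\cdot\|_X$ to upgrade the a.e.\ bound to an everywhere bound. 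With either patch your proof is complete and coincides with the source's.
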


\section{Comments and open problems}\label{sec.comm}
\noindent $\bullet$ \textbf{Case $m\ge 3$.} In this range of  exponents the first energy estimate does not hold anymore. Therefore, we lose the compactness result needed to pass to the limit in the approximations to obtain a weak solution of the original problem.  The second energy estimate is still true and it gives us partial results for compactness. In our opinion a suitable tool to replace the first energy estimate would be proving the decay of some $L^p$ norm. In  that case we will also need a Stroock-Varoupolous type inequality for some approximation $\mathcal{L}_s^\epsilon $ of the fractional Laplacian.  The technique of regularizing the kernel by convolution that we have used through this paper does not allow us to prove such kind of inequality. The idea is however to use a different approximation of the pressure term that is well suited to the  Stroock-Varoupolous type inequalities. Let us mention \cite{EndalJacobsenTeso} where this kind of inequalities are proved for a wider class of nonlocal operators including $\mathcal{L}_s^\epsilon $. The technical details  are involved
and the new approximation may have an interest, hence we think it deserves a separate study.

\medskip

\noindent $\bullet$\textbf{ Infinite propagation in higher dimensions for self similar solutions.} In \cite{StanTesoVazTrans} we proved a transformation formula between self-similar solutions of the model \eqref{model1} with $1<m<2$ and the fractional porous medium equation $u_t+(-\Delta)^s u^m=0$. This way we obtain infinite propagation for self similar solutions of the form $U(x,t)=t^{-\alpha}F(|x|t^{-\alpha/N})$ in $\R^N$. This is a partial confirmation that the property of the infinite speed of propagation holds in higher dimensions for every solution of \eqref{model1} with $1<m<2$.

\noindent $\bullet$ \textbf{ Explicit solutions.} Y. Huang reports  \cite{Huang2013} the explicit expression of the Barenblatt solution for the special value of  $m$, $m_{ex} = (N+6s-2)/(N+2s)$. The profile is given by
\begin{equation*}
F_M(y) =\lambda\,(R^2 + |y|^2)^{-(N+2s)/2},
\end{equation*}
where the two constants $\lambda$ and $R$ are determined by the total mass $M$ of the solution  and the parameter $\beta$. Note that for $s=1/2$ we have $m_{ex}=1$, and the solution corresponds to the linear case, $u_t=(-\Delta)^{1/2} u$, $F_{1/2}(r)=C(a^2+r^2)^{-(N+1)/2}$.

\noindent$\bullet$ \textbf{ Different generalizations of model \eqref{ModelCaffVaz}} are worth studying:

\noindent(i) Changing-sign solutions for the problem $\displaystyle{\partial_t u= \nabla \cdot (|u| \nabla p), \quad p=(-\Delta)^{-s}u.}$

\noindent (ii) Starting from the Problem \eqref{ModelCaffVaz}, an alternative is to consider the problem
\begin{equation*}\label{modelKarch}
u_t=\nabla\cdot(|u|\nabla (-\Delta)^{-s}(|u|^{m-2}u)), \quad x \in \RN, \ t>0,
\end{equation*}
with $m>1$. This problem has been studied by Biler, Imbert and Karch in \cite{BilerImbertKarch}. They construct explicit compactly supported self-similar
solutions which generalize the Barenblatt profiles of the PME. In a later work by Imbert \cite{Imbert}, finite speed of propagation is proved for general solutions.

\noindent(iii) We should consider combining the above models into $ \displaystyle \partial_tu=\nabla(|u|^{m-1}\nabla p),$ $p=(-\Delta)^{-s}u. $ \\ When $s=0$ and $m=2$ we obtain the signed porous medium equation  $ \partial_tu=\Delta( |u|^{m-1}u)$.

\

\

\noindent {\bf \large Acknowledgments.}

\noindent  Authors partially supported by the Spanish project MTM2011-24696. The second author is also supported by a FPU grant from MECD, Spain.

\bibliographystyle{siam}

\begin{thebibliography}{10}

\bibitem{Barles}
{\sc G.~Barles and C.~Imbert}, {\em Second-order elliptic integro-differential
  equations: viscosity solutions' theory revisited}, Ann. Inst. H. Poincar\'{e}
  Anal. Non Lin\'eaire, \textbf{25} (2008),  567--585.

\bibitem{BilerImbertKarchCRAS}
{\sc P.~Biler, C.~Imbert, and G.~Karch}, {\em Barenblatt profiles for a
  nonlocal porous medium equation}, C. R. Math. Acad. Sci. Paris, \textbf{349} (2011),
   641--645.

\bibitem{BilerImbertKarch}
{\sc P.~Biler, C.~Imbert, and G.~Karch},{\em The nonlocal porous
  medium equation: {B}arenblatt profiles and other weak solutions}, Arch.
  Ration. Mech. Anal., \textbf{215} (2015),  497--529.

\bibitem{BilerKarchMonneau}
{\sc P.~Biler, G.~Karch, and R.~Monneau}, {\em Nonlinear diffusion of
  dislocation density and self-similar solutions}, Comm. Math. Phys., \textbf{294}
  (2010),  145--168.

\bibitem{BV2012}
{\sc M.~Bonforte and J.~V{\'a}zquez}, {\em Quantitative local and global a
  priori estimates for fractional nonlinear diffusion equations}, Adv. Math.,
  \textbf{250} (2014),  242--284.

\bibitem{CabreRoquejoffre}
{\sc X.~Cabr{\'e} and J.~M. Roquejoffre}, {\em Front propagation in
  {F}isher-{KPP} equations with fractional diffusion}, Comm. Math. Phys., \textbf{320}
  (2013),  679--722.


\bibitem{CaffSilv}
{\sc L.~Caffarelli and L.~Silvestre}. {\em An extension problem related to the fractional {L}aplacian.}
 Comm. Partial Differential Equations, \textbf{32} (2007), no.7-9:1245--1260.

\bibitem{CaffSorVaz}
{\sc L.~Caffarelli, F.~Soria and J.~L. V{\'a}zquez}, {\em Regularity of
  solutions of the fractional porous medium flow}, J. Eur. Math. Soc. (JEMS),
  \textbf{15} (2013),  1701--1746.

\bibitem{CaffVazquezReg}
{\sc L.~Caffarelli and J.~V\'azquez}, {\em Regularity of solutions of the
  fractional porous medium flow with exponent 1/2},  Algebra i Analiz  [St. Petersburg Mathematical Journal], {\bf 27} (2015), no. 3 (volumen in honor of Nina Uraltseva), to appear.      ArXiv:1409.8190.

\bibitem{CaffVaz}
{\sc L.~Caffarelli and J.~L. Vazquez}, {\em Nonlinear porous medium flow with
  fractional potential pressure}, Arch. Ration. Mech. Anal., \textbf{202} (2011),
   537--565.

\bibitem{CaffVaz2}
{\sc L.~A. Caffarelli and J.~L. V{\'a}zquez}, {\em Asymptotic behaviour of a
  porous medium equation with fractional diffusion}, Discrete Contin. Dyn.
  Syst., \textbf{29} (2011),  1393--1404.

\bibitem{CarrilloHuangVazquez}
{\sc J.~A. Carrillo, Y.~Huang, M.~C. Santos, and J.~L. V{\'a}zquez}, {\em
  Exponential convergence towards stationary states for the 1{D} porous medium
  equation with fractional pressure}, J. Differential Equations, \textbf{258} (2015),
   736--763.

\bibitem{Crandall}
{\sc M.~Crandall, H.~Ishii, and P.~Lions}, {\em User's guide to viscosity
  solutions of second order partial differential equations}, Bull. Amer. Math.
  Soc. (N.S.), 27 (1992)),  1--67.


\bibitem{EndalJacobsenTeso}{\sc J.~Endal, E.~R.~Jakobsen and F.~del Teso}, {\em Uniqueness and existence for very general nonlocal equations of porous medium type}, in preparation.

\bibitem{PQRV1}
{\sc A.~de~Pablo, F.~Quir{\'o}s, A.~Rodr{\'{\i}}guez, and J.~V{\'a}zquez}, {\em
  A fractional porous medium equation}, Adv. Math., \textbf{226} (2011),  1378--1409.

\bibitem{PQRV2}
{\sc A.~de~Pablo, F.~Quir{\'o}s, A.~Rodr\'iguez, and J.~V{\'a}zquez}, {\em A
  general fractional porous medium equation}, Comm. Pure Appl. Math., \textbf{65}
  (2012),  1242--1284.

\bibitem{Teso}
{\sc F.~del Teso}, {\em Finite difference method for a fractional porous medium
  equation}, Calcolo, \textbf{51} (2014),  615--638.

\bibitem{TesoVaz}
{\sc F.~del Teso and J.~L. V\'azquez}, {\em Finite difference method for a
  general fractional porous medium equation}, {\tt arXiv:1307.2474}.,  (2013).

\bibitem{Hitch2012}
{\sc E.~Di~Nezza, G.~Palatucci, and E.~Valdinoci}, {\em Hitchhiker's guide to
  the fractional {S}obolev spaces}, Bull. Sci. Math., \textbf{136} (2012),  521--573.

\bibitem{Head}
{\sc A.~Head}, {\em Dislocation group dynamics iii. {S}imilarity solutions of
  the continuum approximation}, Philosophical Magazine, \textbf{26} (1972),  65--72.

\bibitem{Huang2013}
{\sc Y.~Huang}, {\em Explicit {B}arenblatt profiles for fractional porous
  medium equations}, Bull. Lond. Math. Soc., \textbf{46} (2014),  857--869.

\bibitem{Imbert}
{\sc C.~Imbert}, {\em Finite speed of propagation for a non-local porous medium
  equation}, preprint, http://arxiv.org/abs/1411.4752.

\bibitem{ImbertHomog}
{\sc C.~Imbert, R.~Monneau, and E.~Rouy}, {\em Homogenization of first order
  equations with {$(u/\epsilon)$}-periodic {H}amiltonians. {II}. {A}pplication
  to dislocations dynamics}, Comm. Partial Differential Equations, \textbf{33} (2008),
   479--516.

\bibitem{Jakobsen}
{\sc E.~R. Jakobsen and K.~H. Karlsen}, {\em A ``maximum principle for
  semicontinuous functions'' applicable to integro-partial differential
  equations}, NoDEA Nonlinear Differential Equations Appl., \textbf{13} (2006),
   137--165.

\bibitem{KaminVaz91}
{\sc S.~Kamin and J.~L. V{\'a}zquez}, {\em Asymptotic behaviour of solutions of
  the porous medium equation with changing sign}, SIAM J. Math. Anal., \textbf{22}
  (1991),  34--45.

\bibitem{landkof}
{\sc N.~S. Landkof}, {\em Foundations of modern potential theory},
  Springer-Verlag, New York-Heidelberg, 1972.
\newblock Translated from the Russian by A. P. Doohovskoy, Die Grundlehren der
  mathematischen Wissenschaften, Band 180.

\bibitem{LionsMasGalic}
{\sc P.~Lions and S.~Mas-Gallic}, {\em Une m\'ethode particulaire
  d\'eterministe pour des \'equations diffusives non lin\'eaires}, C. R. Acad.
  Sci. Paris S\'er. I Math., \textbf{332} (2001),  369--376.

\bibitem{simon}
{\sc J.~Simon}, {\em Compact sets in the space ${L}^p(0,{T};{B})$}, Ann. Mat.
  Pura Appl., \textbf{146} (1987),  65--96.

\bibitem{StanTesoVazCRAS}
{\sc D.~Stan, F.~del Teso, and J.~L. V{\'a}zquez}, {\em Finite and infinite
  speed of propagation for porous medium equations with fractional pressure},
  C. R. Math. Acad. Sci. Paris, \textbf{352} (2014),  123--128.

\bibitem{StanTesoVazTrans}
{\sc D.~Stan, F.~d. Teso, and J.~L. V{\'a}zquez}, {\em Transformations of
  self-similar solutions for porous medium equations of fractional type},
  Nonlinear Anal., \textbf{119} (2015),  62--73.

\bibitem{StanVazquezKPP}
{\sc D.~Stan and J.~L. V{\'a}zquez}, {\em The {F}isher-{KPP} {E}quation with
  {N}onlinear {F}ractional {D}iffusion}, SIAM J. Math. Anal., 46 (2014),
   3241--3276.

\bibitem{VazquezBarenblattFractPME}
{\sc J.~L. V{\'a}zquez}, {\em Barenblatt solutions and asymptotic behaviour for
  a nonlinear fractional heat equation of porous medium type}, J. Eur. Math.
  Soc. (JEMS), \textbf{16} (2014),  769--803.

\end{thebibliography}

\end{document}